\newtheorem{theorem}{Theorem}[section]
\newtheorem{lemma}[theorem]{Lemma}
\newtheorem{corollary}[theorem]{Corollary}
\theoremstyle{definition}
\theoremstyle{remark}
\newtheorem{remark}[theorem]{Remark}
\numberwithin{equation}{section}
\newcommand{\norm}[1]{\lVert#1\rVert}
\newcommand{\cL}{\mathcal{L}}
\newcommand{\R}{\mathbb{R}}
\newcommand{\e}{\varepsilon}
\newcommand{\txt}[1]{\text{#1}}
\begin{document}

\title[]{Doubling inequalities and nodal sets in periodic elliptic homogenization}


\author {Carlos E. Kenig}
\address{
	Department of Mathematics\\
	University of Chicago\\
	Chicago, IL 60637, USA\\
	Email:  cek@math.uchicago.edu }

\author{ Jiuyi Zhu}
\address{
Department of Mathematics\\
Louisiana State University\\
Baton Rouge, LA 70803, USA\\
Email:  zhu@math.lsu.edu }
\author {Jinping Zhuge}
\address{
Department of Mathematics\\
University of Chicago\\
Chicago, IL 60637, USA\\
Email:   jpzhuge@math.uchicago.edu }

\subjclass[2010]{35A02, 35B27, 35J15.}
\keywords {Periodic homogenization, Doubling inequalities, Nodal sets}
\thanks{Kenig is supported in part by NSF grant DMS-1800082; Zhu is supported in part by NSF grant OIA-1832961}
\date{\today}


%
\begin{abstract}
	We prove explicit doubling inequalities and obtain uniform upper bounds (under $(d-1)$-dimensional Hausdorff measure) of  nodal sets of weak solutions for a family of linear elliptic equations with rapidly  oscillating periodic coefficients. The doubling inequalities, explicitly depending on the doubling index, are proved at different scales by a combination of convergence rates, a three-ball inequality from certain ``analyticity'', and a monotonicity formula of a frequency function. The upper bounds of nodal sets are shown by using the doubling inequalities, approximations by harmonic functions and an iteration argument.
\end{abstract}

\maketitle
\section{Introduction}

The paper is concerned with doubling inequalities and upper bounds of nodal sets of solutions in periodic elliptic homogenization.
We consider a family of elliptic operators in divergence form with rapidly oscillating periodic coefficients
\begin{align}\label{main}
\mathcal{L}_\e =-\nabla \cdot \big(A(x/\e)\nabla \big),
\end{align}
where $\e>0$, and $A(y)=\big( a_{ij}(y)\big)$ is a symmetric $d\times d$ matrix-valued function in $\mathbb R^d$ with dimension $d\geq 2$. Assume that $A(y)$ satisfies the following assumptions:

\begin{itemize}
	\item Strong ellipticity: there is $\Lambda >0$ such that
	\begin{align}\label{ellip}
	 \Lambda |\xi|^2\leq \langle A(y)\xi, \ \xi \rangle\leq  |\xi|^2, \quad \quad \txt{for any } y\in \R^d, \xi\in \R^d.
	\end{align}
	
	\item Periodicity:
	\begin{equation}\label{perio}
	A(y+z)=A(y) \quad \quad \mbox{for any } \ y\in  \mathbb R^d \ \mbox{and} \ z\in \mathbb Z^d.
	\end{equation}
	
	\item Lipschitz continuity: There exists a constant $\gamma \ge 0$ such that
	\begin{align}\label{cond.Lip}
	|A(x)-A(y)|\le \gamma|x-y|, \qquad \txt{for any }x, y\in \mathbb R^d.
	\end{align}
\end{itemize}

The doubling inequality describes quantitative behavior to characterize the strong unique continuation property, which has important applications in inverse problems, control theory and the study of nodal sets of eigenfunctions. For harmonic functions or solutions of general elliptic equations in divergence form with Lipschitz coefficients, the doubling inequality is a consequence of a monotonicity formula or Carleman estimates; see \cite{GL86,JL96,K07,Z16}.  
In periodic elliptic homogenization, the first doubling inequality was obtained recently by Lin and Shen \cite{LS19} with an implicit dependence on the doubling index. Precisely, they proved that if $u_\e$ is a weak solution of $\cL_{\e}(u_\e) = 0$ in $B_2 = B_2(0)$ and
\begin{equation}\label{precond}
\int_{B_2} u_\e^2 \le N \int_{B_{\Lambda}} u_\e^2,
\end{equation}
then for any $r\in (0,1)$,
\begin{equation}\label{est.LinShen}
\int_{B_{r}} u_\e^2 \le C(N) \int_{B_{r/2}} u_\e^2,
\end{equation}
where $C(N)$ depends only on $d,\Lambda,\gamma$ and $N$. The point here is that the constant $C(N)$ is independent of the small parameter $\e$. This cannot be derived directly from the classical doubling inequality as the Lipschitz constant of the coefficients blows up as $\e$ approaches zero. However, it is not known that how the constant $C(N)$ in (\ref{est.LinShen}) depends on $N$, because (\ref{est.LinShen}) was proved by a compactness argument. We mention that if $\e = 1$, the classical doubling inequality shows that $C(N) = CN^K$ for some $C,K\ge 1$; also see Lemma \ref{lem.Doubling.small}.

On the other hand, the Hadamard three-ball inequality also describes the quantitative unique continuation property. In periodic elliptic homogenization, two different versions of the three-ball inequality with error terms were discovered in \cite{AKS20} and \cite{KZ19}. In general, the three-ball inequalities with errors are weaker than the doubling inequalities, as they alone do not imply the strong unique continuation.

Our first goal of this paper is to find an explicit estimate for the constant $C(N)$ in the doubling inequality in periodic elliptic homogenization. The explicit doubling inequality not only provides more clear quantitative information for the solutions (such as the vanishing order), but also has more applications. We state the result as follows.

%
%
%

\begin{theorem}\label{thm.main}
Assume that $A=A(y)$ satisfies the conditions  (\ref{ellip}), (\ref{perio}) and (\ref{cond.Lip}). Let $u_\e$ be a weak solution of $\cL_\e (u_\e) = 0$ in $B_1$.
\begin{itemize}
	\item[(i)] For $d \ge 3$ and every $\tau>0$, there exist $\theta\in (0,1/2)$ and $C>1$, depending only on $d, \tau, \Lambda$ and $\gamma$, such that if $u_\e$ satisfies
		\begin{equation}\label{est.B1}
		\int_{B_1} u_\e^2 \le N \int_{B_{\theta}} u_\e^2,
	\end{equation}
	then for every $r \in (0,1)$,
	\begin{equation}\label{est.Br}
		\int_{B_{r}} u_\e^2 \le \exp(\exp(CN^\tau)) \int_{B_{\theta r}} u_\e^2.
	\end{equation}
	\item[(ii)] For $d = 2$, there exists a constant $C>1$ depending only on $\Lambda$ and $\gamma$ such that if $u_\e$ satisfies
	\begin{equation}\label{est.B1-2}
		\int_{B_1} u_\e^2 \le N \int_{B_{\frac{\Lambda}{2}}} u_\e^2,
	\end{equation}
	then for every $r\in (0,1)$,
	\begin{equation}\label{est.Br-2}
		\int_{B_{r}} u_\e^2 \le \exp(C(\ln N)^2) \int_{B_{\frac{r}{2}}} u_\e^2.
	\end{equation}
\end{itemize}
\end{theorem}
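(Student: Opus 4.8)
The plan is to prove the doubling inequality in three ranges of scales and then chain them together, keeping explicit track of how the doubling index degrades. Throughout, write $M(r)=\int_{B_r}u_\e^2$, let $\widehat A$ be the constant homogenized matrix and $\cL_0=-\nabla\cdot(\widehat A\nabla)$ the homogenized operator; since solutions of $\cL_0 v=0$ have constant coefficients, they are real-analytic and hence satisfy a genuine three-ball inequality with an exponent $\alpha\in(0,1)$ depending only on $d,\Lambda$ and the radius ratios. The smallest scales, by contrast, will be governed by the classical doubling inequality of Lemma~\ref{lem.Doubling.small}, itself a consequence of the monotonicity of an Almgren-type frequency function.

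First I would dispose of the \emph{small scales} $r\lesssim\e$. Setting $v(y)=u_\e(\e y)$ turns the equation into $-\nabla\cdot(A(y)\nabla v)=0$ on a ball of radius $\gtrsim 1$, whose coefficient matrix has Lipschitz constant $\gamma$ independent of $\e$. Lemma~\ref{lem.Doubling.small} then gives $\int_{B_{2s}}v^2\le\exp(CD_\e)\int_{B_s}v^2$ for all $s\le\tfrac14$, where $D_\e$ is the doubling index of $v$ at the unit scale, equivalently of $u_\e$ at scale $\e$. Undoing the scaling, this controls the doubling of $u_\e$ at \emph{all} scales $r\le c\e$ by the single constant $\exp(CD_\e)$. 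Hence the whole problem reduces to bounding $D_\e$ in terms of $N$, together with proving the doubling inequality on the remaining band $c\e\le r\le 1$.

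For that band the engine is a three-ball inequality with error, in the spirit of \cite{AKS20,KZ19}, coming from quantitative homogenization: on each $B_{2r}$ with $r\gtrsim\e$ I would compare $u_\e$ with the $\cL_0$-solution $u_0$ having the same boundary data (after rescaling $B_{2r}$ to the unit ball), use the interior convergence-rate estimate $\|u_\e-u_0\|_{L^2(B_r)}\le C(\e/r)^{\sigma}M(2r)^{1/2}$ for some $\sigma>0$ depending on $d,\Lambda$, and feed it into the analytic three-ball inequality for $u_0$, obtaining
\[
M(r)\le C\,M(r/2)^{\alpha}M(2r)^{1-\alpha}+C(\e/r)^{\sigma}M(2r),\qquad \e\le r\le c.
\]
Taken alone this is weaker than a doubling inequality — it does not by itself yield strong unique continuation — so I would iterate it scale by scale, using the monotonicity of the frequency function to keep the doubling index from jumping by more than a bounded factor between consecutive dyadic scales. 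Starting from the precondition (\ref{est.B1}) (resp.\ (\ref{est.B1-2})), which controls the doubling index at scale $\approx 1$ by $C\log N$, I would propagate the bound downward: whenever the current index is below a fixed multiple of $\log(r/\e)$, the additive error $(\e/r)^{\sigma}M(2r)$ is dominated and a clean doubling estimate is recovered at scale $r$; the only scales at which the error cannot be absorbed are those where the index is already large, and there one falls back on the weaker conclusion the inequality still provides. Carrying out this accounting across the $\lesssim\log(1/\e)$ dyadic scales between $1$ and $\e$, and optimizing the geometric parameters $\alpha$ and $\theta$, one arrives at $D_\e\le\exp(CN^{\tau})$ for $d\ge3$, where the arbitrarily small power $\tau$ is paid for by enlarging $C$ and shrinking $\theta$. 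In $d=2$ the stronger rigidity of planar constant-coefficient elliptic equations — essentially the link with holomorphic functions, which makes $\alpha$ and the whole iteration far more favorable — permits the cruder starting ball $B_{\Lambda/2}$ and yields $D_\e\le C(\ln N)^2$. The same iteration simultaneously delivers the doubling inequality throughout $c\e\le r\le 1$ with constant $\exp(CD_\e)$.

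Combining the three ranges, the doubling inequality holds for every $r\in(0,1)$ with constant $\exp(CD_\e)$, which is $\exp(\exp(CN^{\tau}))$ when $d\ge3$ and $\exp(C(\ln N)^2)$ when $d=2$; the finitely many borderline scales near $c\e$ are absorbed by interior elliptic estimates, and a last rescaling passes from balls centred at the origin to the stated radii. The main obstacle is the middle step: because the three-ball inequality with error is genuinely weaker than a doubling inequality, the accumulation of the error terms over logarithmically many scales has to be controlled, and this is exactly where the precondition and the frequency monotonicity are indispensable — quantifying precisely how fast the doubling index may degrade, so as to land on $\exp(CN^{\tau})$ (resp.\ $(\ln N)^2$) rather than on an $\e$-dependent or unbounded quantity, is the technical heart of the argument.
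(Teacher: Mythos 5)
Your three-range decomposition matches the paper's in outline, and the large-scale range ($r\gtrsim N^{C}\e$, handled by convergence rates plus the exact three-ball inequality for the homogenized solution) is essentially the paper's Step 1. But the middle of your argument has a genuine gap. The three-ball inequality with algebraic error $(\e/r)^{\sigma}M(2r)$ can only be absorbed while the doubling constant stays below $(r/\e)^{\sigma}$; since the doubling constant is at least $N$ from the start, this mechanism dies at $r\approx N^{1/\sigma}\e$ and cannot be ``iterated scale by scale'' down to $r\approx c\e$. Your fallback --- ``the monotonicity of the frequency function to keep the doubling index from jumping by more than a bounded factor between consecutive dyadic scales'' --- is not available with $\e$-independent constants at scales $r\gg\e$: after rescaling $B_r$ to the unit ball the Lipschitz constant of the coefficients is $\gamma r/\e$, and Lemma \ref{frequency} only gives almost-monotonicity up to a factor $\exp(CL)$ with $L=\gamma r/\e$, which is exactly the blow-up the whole paper is designed to avoid. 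The phrase ``there one falls back on the weaker conclusion the inequality still provides'' is where the actual content should be, and nothing is supplied.

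What the paper does to bridge this band is qualitatively different and is the heart of the proof. For $d\ge 3$ it invokes the Armstrong--Kuusi--Smart three-ball inequality with a \emph{sharp exponential} error $\exp(-c\theta^2 R)$ (Theorem \ref{thm.AKS}, a consequence of large-scale analyticity from periodicity), whose iteration raises the doubling constant to the power $1+\alpha_1$ at each of $\approx\log N$ scales, producing $\exp(N^{\tau/2})$ at scale $r\approx\e N^{\tau/2}$ --- and it stops there, because the AKS inequality itself requires $\theta^2 R\gtrsim\ln(\text{current doubling constant})$. Consequently the final blow-up step must be run with rescaled Lipschitz constant $L\approx N^{\tau/2}$, and Lemma \ref{lem.Doubling.small} gives $M^{C_1e^{C_2L}}$, which is precisely the source of the double exponential $\exp(\exp(CN^{\tau}))$. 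Your version, which asserts the middle band reaches $r\approx c\e$ so that the last step only sees Lipschitz constant $\gamma$, would yield a single exponential $\exp(CD_\e)$ --- a strictly stronger statement than the theorem, and one the authors explicitly say they cannot prove. For $d=2$ the paper's bridge is also not what you describe: it uses quasi-regular mappings for the \emph{variable-coefficient} operator $\cL_\e$ itself (Lemma \ref{lem.confor}), giving almost-monotonicity of the doubling constant on quasi-balls with a constant independent of $\e$; the favorable planar behavior of the constant-coefficient homogenized equation plays no role at scales comparable to $\e$. You need to supply a concrete mechanism for the range $N^{1/\sigma}\e\gtrsim r\gtrsim \e\,(\text{polynomial or logarithmic in }N)$; without one the proposed proof does not close.
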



The double exponential growth $\exp(\exp(CN^\tau))$ for $d\geq 3$ in (\ref{est.Br}) and sub-exponential growth $\exp(C(\ln N)^2) = N^{C\ln N}$ for $d=2$ in (\ref{est.Br-2}) seem to be the best we can obtain from the existing tools and results; see Remark \ref{re-three}. Our ultimate hope is for an estimate of the form $C(N)=N^{{C(\ln\ln N)}^p}$, for some $p>0$ depending on $d, \Lambda$ and $\gamma$. Such an estimate would have very important consequences for the study of long-standing open problems regarding the spectral properties of second order elliptic operators with periodic coefficients and their quantitative unique continuation properties (see for instance Conjecture 6.13, Theorem 6.15 and Conjecture 6.16 in \cite{K16}). This connection between the conjectured optimal doubling estimates and Conjecture 6.13 in \cite{K16} was observed by the first author, D. Mendelson and C. Smart in the fall of 2019. This motivated the current work.

As a straightforward corollary, Theorem \ref{thm.main} implies that the vanishing order of $u_\e$ at the origin does not exceed  $\exp(CN^\tau)$ for $d\geq 3$ and $C(\ln N)^2$ for $d=2$.
Theorem \ref{thm.main} also implies a three-ball inequality without an error term, in contrast to the results in \cite{AKS20} and \cite{KZ19}, namely (e.g., for $d\ge 3$),
\begin{align}
\int_{B_{\theta r}} u_\e^2 \le \exp(\exp(CN^\tau)) \bigg(\int_{B_{\theta^2 r}} u_\e^2\bigg)^{\tau_1} \bigg(\int_{B_{ r}} u_\e^2\bigg)^{1-\tau_1}
\end{align}
for any $0<\tau_1<1$.

The proof of Theorem \ref{thm.main} breaks down into three steps:
\begin{itemize}
	\item Step 1: $\e/r \lesssim N^{-5}$. In this case, we take advantage of the convergence rate in homogenization theory and use the precise three-ball inequality of harmonic functions. The smoothness of the coefficients is not needed in this step.

	\item Step 2: In this step, we need to use ``analyticity'', which distinguishes between $d\ge 3$ and $d=2$. For $d\ge 3$, we let $N^{-5} \lesssim \e/r \lesssim N^{-\frac{1}{2}\tau}$, and use a three-ball inequality with a sharp exponential error term proved recently in \cite{AKS20} by Armstrong, Kuusi and Smart, which is a consequence of the ``large-scale analyticity'' from periodic homogenization. This will lead to a nontrivial improvement on the exponent so that $\tau>0$ in Theorem \ref{thm.main} can be arbitrarily small. Again in this case, the periodic structure will play a role; but the smoothness of coefficients is still not required. For $d=2$, we let $N^{-5} \lesssim \e/r \lesssim 1$, and apply a doubling inequality derived from quasi-regular mappings \cite{AL08} (related to complex analyticity), which requires no smoothness or periodicity on the coefficients. Unfortunately, this method works only in two dimensions.

	\item Step 3: $\e/r \gtrsim N^{-\frac{1}{2}\tau}$ for $d\ge 3$ or $\e/r \gtrsim 1$ for $d=2$. In this case, the classical doubling inequality for elliptic operators with Lipschitz coefficients can be handled by a monotonicity formula for the frequency function. If $d\geq 3$, the Lipschitz constant of the coefficients turns out to be $O(N^{\frac{1}{2}\tau})$ after rescaling. A careful calculation shows that the constant in (\ref{est.Br}) is at least $ \exp(\exp(CN^\tau))$, if the periodicity is not used. If $d=2$, the Lipschitz constant of the coefficients after rescaling is bounded by $C$, independent of $\e$ and $N$. This allows us to obtain a much better estimate in two dimensions.
\end{itemize}

For $d\ge 3$, one will see in the proof that the estimate in Step 3 leads to the double-exponential growth of the constant in (\ref{est.Br}).
What happens when $\e/r \gtrsim N^{-\frac12 \tau}$? To gain some intuition, consider a typical harmonic function $w_k = r^k \cos(k\alpha)$ in $\R^2$ (see \cite{GL86} or \cite{HL13}). Note that
\begin{equation*}
\frac{\int_{B_1} w_k^2}{\int_{B_\theta} w_k^2} = C\theta^{-2k}.
\end{equation*}
By setting $N = C\theta^{-2k}$, we see that the intrinsic frequency of $w_k$ (i.e., the number of times that $w_k$ changes signs) is approximately $\ln N/(-\ln \theta)$. Now, let $u_\e$ be a weak solution of $\cL_{\e}(u_\e) = 0$ whose limit is $w_k$ (the homogenized solution) as $\e \to 0$. In view of the interior first-order approximation $u_\e \sim w_k + \e \chi(x/\e) \nabla w_k$,
the intrinsic frequency of $w_k$ will interact with the frequency of oscillation of the corrector $\chi(x/\e)$. Particularly, under rescaling, if $r/\e \approx \ln N$, the frequency of oscillation of the rescaled coefficients $A(rx/\e)$ (or correctors) is comparable to the intrinsic frequency of $w_k$. Note that the intrinsic frequency does not change under rescaling. It seems that the resonance between these two frequencies causes the failure of the arguments in Step 1 and Step 2 when ${\e/r} \approx (\ln N)^{-1} \gtrsim N^{-\frac12 \tau}$ (note that $\tau$ can be arbitrarily small and thus $N^{-\frac12 \tau}$ is close to the resonant situation), and we do not have a tool to handle this situation (except for $d=2$). We believe that an effective argument should take advantage of both the periodicity and the Lipschitz continuity of the coefficients. 


Our second goal is to obtain an upper bound for the nodal sets of solutions in periodic elliptic homogenization.
The study of the $(d-1)$-dimensional Hausdorff measure of nodal sets centers
around  Yau's conjecture for Laplace eigenfunctions on smooth manifolds:
\begin{equation}
-\Delta_g \phi_\lambda=\lambda \phi_\lambda, \qquad \txt{on } \mathcal{M},
\label{class}\end{equation}
where $\mathcal{M}$ is a compact smooth Riemannian manifold without boundary.
It was conjectured in \cite{Y82} that the bounds of nodal sets of
eigenfunctions in (\ref{class}) are controlled by
\begin{equation}c\sqrt{\lambda}\leq H^{d-1}(\{x\in\mathcal{M}|\phi_\lambda(x)=0\})\leq C\sqrt{\lambda}\label{yau}
\end{equation} where $C, c$
depend only on the manifold $\mathcal{M}$ and $H^{d-1}$ denotes the $(d-1)$-dimensional Hausdorff measure. The conjecture (\ref{yau}) was shown
for real analytic manifolds by Donnelly-Fefferman in
\cite{DF88}. Lin \cite{L91} also proved the upper bound for the
analytic case, using an approach by frequency functions. We should mention that, by a lifting argument, Yau's conjecture can be reduced to studying the nodal sets of harmonic functions on smooth manifolds. In recent years, there was an important breakthrough made by Logunov and Malinnikova \cite{LM18-1},  \cite{L18-1} and \cite{L18-2}. A polynomial upper bound was given in \cite{L18-1} and the sharp lower bound in the conjecture was shown in \cite{L18-2}. We are interested in the upper bound of nodal sets for $\cL_{\e}(u_\e) = 0$ with rapidly oscillating periodic coefficients. The study of nodal sets in homogenization was initiated by Lin and Shen \cite{LS19}, where an implicit upper bound depending on the doubling index was shown. We are able to provide an explicit upper bound.

\begin{theorem}\label{mainth}
	Assume that $A=A(y)$ satisfies the conditions  (\ref{ellip}), (\ref{perio}), and (\ref{cond.Lip}). Let $u_\e$ be a nonzero weak solution of $\mathcal{L}_\e (u_\e)=0$ satisfying (\ref{precond}).
	\begin{itemize}
		\item[(i)] If $d\ge 3$, then for any $\alpha>8$, it holds that
		\begin{align}\label{est.Nodal}
			H^{d-1}(\{x\in B_\frac{\Lambda}{16}| u_\e(x)=0\})\leq \exp(CN^{\alpha}),
		\end{align}
		where $C$ depends only on $d$, $\Lambda, \gamma$ and $\alpha$.
		\item[(ii)] If $d = 2$, then it holds that
		\begin{align}\label{est.Nodal2d}
			H^{1}(\{x\in B_\frac{\Lambda}{16}| u_\e(x)=0\})\leq \exp(C(\ln N)^2),
		\end{align}
		where $C$ depends only on $\Lambda$ and $\gamma$.
	\end{itemize}
\end{theorem}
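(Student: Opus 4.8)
\textbf{Proof proposal for Theorem~\ref{mainth}.} The plan is to combine the explicit doubling inequality of Theorem~\ref{thm.main} with an approximation of $u_\e$ by harmonic functions at scales above $\e$, a rescaling that tames the Lipschitz constant at scales below $\e$, and Logunov's multiscale counting scheme for nodal sets. First I would convert Theorem~\ref{thm.main} and the hypothesis~(\ref{precond}) into a uniform bound on the local doubling index $\mathcal{D}(x_0,r):=\log_2\big(\int_{B_{2r}(x_0)}u_\e^2 / \int_{B_{r}(x_0)}u_\e^2\big)$, valid for all balls with $x_0\in B_{\Lambda/16}$ and $B_{4r}(x_0)\subset B_{1/2}$. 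Passing from the doubling inequality centered at the origin to one centered at arbitrary nearby points and arbitrarily small scales is routine once one also invokes a three-ball (propagation of smallness) inequality; after a short bootstrap to reconcile the normalizations in~(\ref{est.B1}) and~(\ref{est.B1-2}) with~(\ref{precond}), this gives $\mathcal{D}(x_0,r)\le\Lambda_0$, with $\Lambda_0\le\exp(CN^\tau)$ for any prescribed $\tau>0$ when $d\ge3$ and $\Lambda_0\le C(\ln N)^2$ when $d=2$; interior elliptic estimates then let one move freely between the $L^2$ and $L^\infty$ versions of this index.

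For scales $r\gtrsim\e$, rescaling $u_\e$ on $B_r(x_0)$ to the unit ball gives a solution of $\mathcal{L}_{\e/r}(\cdot)=0$ with small parameter $\e/r$, which I would approximate in $L^\infty(B_{1/2})$, at rate $O((\e/r)^\sigma)$ in units of $(\int_{B_1}u_\e^2)^{1/2}$, by the solution $w$ of the homogenized constant-coefficient equation; an affine change of variables makes $w$ harmonic (here the Lipschitz regularity~(\ref{cond.Lip}) enters the convergence rate, as it does in Step~3 of the proof of Theorem~\ref{thm.main}). Since $w$ is close to the rescaled $u_\e$, it has doubling index $\lesssim\Lambda_0$, so Logunov's polynomial nodal bound \cite{L18-1} applies to it, and --- more to the point --- so does its combinatorial engine: subdividing a cube of doubling index $\le M$ into $A^d$ congruent subcubes, at most $C(d)A^{d-1}$ of them have doubling index exceeding $\max(M/2,C)$. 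Because $\{u_\e=0\}$ is empty on any subcube where $|w|$ stays above the approximation error, the subcubes of $B_r(x_0)$ that can meet $\{u_\e=0\}$ are confined to a controlled neighborhood of $\{w=0\}$ and are therefore counted by the same lemma.

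Scales $r\lesssim\e$ are the easy regime: the substitution $x\mapsto\e x$ turns $u_\e$ into a solution of $-\nabla\cdot(A(y)\nabla(\cdot))=0$ on a ball of radius $r/\e\le1$, and a further rescaling to the unit ball produces a divergence-form equation with Lipschitz constant $\le\gamma r/\e\le\gamma$, uniformly in $\e$ and $N$; the Logunov--Malinnikova nodal bound (and its simplex lemma) for fixed Lipschitz coefficients then applies with $\e$-independent constants \cite{LM18-1}. For $d=2$ the whole problem collapses to this step: by \cite{AL08} a solution of a planar divergence-form equation is, after a quasiconformal change of variables, the real part of a holomorphic function, which gives a nodal-length bound linear in the doubling index with no regularity assumption on $A$; applying it directly to $u_\e$ on $B_{\Lambda/16}$ and inserting $\Lambda_0\le C(\ln N)^2$ from the first step already yields~(\ref{est.Nodal2d}).

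It remains to assemble~(\ref{est.Nodal}) for $d\ge3$ by Logunov's iteration: starting from $B_{\Lambda/16}$, repeatedly subdivide, branching into the ``good'' subcubes on which the doubling index has dropped by half (so after $\sim\log_2\Lambda_0$ good generations it is $O(1)$ and the local nodal set is controlled outright) while retaining the $\le C(d)A^{d-1}$ ``bad'' subcubes per subdivision, and stopping any branch once it reaches scale $\sim\e$, where the bounded-Lipschitz bound of the previous step finishes it off. The scheme is engineered precisely so that this recursion closes into a bound on $H^{d-1}(\{u_\e=0\}\cap B_{\Lambda/16})$ that is polynomial in $\Lambda_0$ and independent of the number of scales, hence of $\e$; inserting $\Lambda_0\le\exp(CN^\tau)$ produces~(\ref{est.Nodal}). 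The exponent threshold $\alpha>8$ is what survives after tracking simultaneously the convergence-rate exponent $\sigma$, the scale cutoffs $N^{-5}$ and $N^{-\tau/2}$ appearing in the proof of Theorem~\ref{thm.main}, and Logunov's nodal exponent. I expect the main obstacle to be exactly the matching near the resonant scale $r\sim\e$: there the harmonic approximation is only $O(1)$-accurate, so the ``good subcube'' count cannot be read off from $w$, and the simplex lemma must instead be propagated across a bounded band of scales directly from the uniform Lipschitz bound obtained after rescaling --- it is the careful bookkeeping across this transition, rather than any in-principle obstruction, that forces $\alpha>8$ rather than an arbitrarily small power.
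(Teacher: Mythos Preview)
Your proposal has a genuine gap in the iteration mechanism for $d\ge 3$. You propose to run Logunov's frequency-drop scheme (good subcubes where the doubling index halves, at most $CA^{d-1}$ bad subcubes) on the harmonic approximation $w$. The difficulty is that the approximation changes with the scale: at scale $r$ you have $w_r$, at scale $r/A$ a \emph{different} harmonic function $w_{r/A}$. Even if a subcube is ``good'' for $w_r$ (its $w_r$-doubling index has halved), there is no mechanism forcing the $w_{r/A}$-index, or the $u_\e$-index, to inherit that drop; all you know at the next scale is again the global bound $\Lambda_0$. So the halving never accumulates, and the recursion you sketch does not close to a polynomial-in-$\Lambda_0$ bound independent of $\e$. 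Relatedly, your sentence ``the subcubes that can meet $\{u_\e=0\}$ are confined to a neighborhood of $\{w=0\}$ and are therefore counted by the same lemma'' conflates two different counts: the simplex/hyperplane lemma bounds the number of \emph{high-index} subcubes, not the number of subcubes meeting $\{w=0\}$, which by Logunov's own polynomial bound is $\sim N_0^{C}A^{d-1}$, not $CA^{d-1}$.

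The paper's iteration is structurally different and does not attempt to decrease the doubling index at all. At each scale it approximates $u_\e$ by a harmonic $u_0$ and splits the nodal set according to the size of $|\nabla u_0|$: where $|\nabla u_0|$ is bounded below, $\{u_\e=0\}$ lies in an $O(\e)$-thin slab around the smooth hypersurface $\{u_0=0\}$ and its $H^{d-1}$ measure is estimated directly; where $|\nabla u_0|$ is small one is near the \emph{critical set} of $u_0$, whose $(d-1)$-Minkowski content is controlled by the Naber--Valtorta effective stratification \cite{NV17}. Choosing the gradient threshold $\hat\delta\sim e^{-C(\ln N)^3}$ makes the bad covering satisfy $\sum \hat t_j^{\,d-1}\le \tfrac14$, which yields the contraction $E_\e(0,\tfrac{1}{16})\le \exp(CN^{2/(\beta-3/4)})+\tfrac14\sup_j E_\e(y_j,\hat t_j)$ and hence a geometric series upon iteration. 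This critical-set ingredient is absent from your outline and is what makes the argument $\e$-independent. Incidentally, your account of the threshold $\alpha>8$ is also off: it arises because the nodal proof uses the cruder doubling inequality~(\ref{doudouble.es}) (Steps~1 and~3 only, $\theta=\Lambda/2$), which after recentring gives exponent $2/(\beta-\tfrac34)>8$; it is \emph{not} a by-product of matching at the resonant scale. For $d=2$, the paper still runs the same stratification-based iteration (with the sharper 2D critical-set bound), and the $\exp(C(\ln N)^2)$ comes from the stratification threshold $\hat\delta\sim e^{-C(\ln N)^2}$, not from the doubling index; your proposed direct quasi-regular argument would require controlling $H^1$ under a quasiconformal change of variables, which is not straightforward since quasiconformal maps need not preserve rectifiable length.
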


The strategy of the proof is as follows. For relatively large $\e$, we adapt a blow-up argument to obtain the upper bounds of nodal sets. For small $\e$, the solution $u_\e$ can be approximated by a harmonic function $u_0$, and thus the nodal set of $u_\e$ is a small perturbation of the nodal set of $u_0$. We then derive a quantitative estimate for the nodal set of $u_\e$ by carefully studying the small perturbations near the nodal set and critical set of $u_0$, which has its root in the analogous qualitative estimates obtained in \cite{LS19}. By iterating such quantitative estimate, we are able to show the upper bound for the nodal sets of $u_\e$. The restriction $\alpha>8$ for $d\ge 3$ arises from the doubling inequality (\ref{generaldouble}) for $\beta\in(\frac{3}{4}, 1)$. If we consider $N$ to be $\exp(CM)$ for some large constant $M$, which is the case for the doubling inequality of eigenfunctions, the upper bounds of nodal sets are double exponential functions  $\exp(\exp(CM))$. In this sense, the restriction $\alpha>8$ only affects the constant $C$ in such upper bounds, which does not play an important role. For $d = 2$, we point out that there is no misprint in the exponential (compared to $d\ge 3$). We still have the exponential, because in this situation, instead of the doubling inequality in (\ref{est.Br-2}), the suboptimal quantitative stratification of the critical set of harmonic functions \cite{NV17} dominates the upper bound.


The paper is organized as follows. Section 2 is devoted to a doubling inequality at relatively large scales by the homogenization theory. In section 3, we derive the doubling inequality, using frequency functions, and show how it depends on the large Lipschitz constant of the coefficients. Then, Theorem \ref{thm.main} is proved in section 4 and Theorem \ref{mainth} is proved in section 5.
 Throughout the paper, the letters $c$, $C$, $\hat{C}$, $\tilde{C}$, $C_i$, $c_i$ denote  positive
constants that do not depend on $\e$ or $u_\e$, and they may vary from line to line.

\textbf{Acknowledgements:}
Parts of this work were carried out during the second author's visit to the Department of Mathematics at the University of Chicago during January-March 2020. The first author would like to thank D. Mendelson and C. Smart for many insightful discussions on doubling inequalities in periodic homogenization. The second author would like to thank the Department of Mathematics at Chicago for the warm hospitality and the wonderful academic atmosphere. The second author also would like to thank F. Lin for helpful discussions on three-ball inequalities in periodic homogenization. The third author would like to thank D. Mendelson for insightful discussions during the early stages of this work.

\section{Homogenization}
In this section, we deal with the case $\e/r \gtrsim N^{-\frac{1}{\beta-\frac{3}{4}}}$ for all dimensions. Indeed, we will prove a quantitative version of \cite[Theorem 3.1]{LS19}.

Let $\cL_0 = -\nabla\cdot (\widehat{A}\nabla)$ be the homogenized operator and $\widehat{A}$ be the homogenized coefficient matrix of $A$ (see, e.g., \cite{S18} for the general theory of periodic elliptic homogenization). Define the ellipsoid
\begin{equation*}
E_r = \big\{ x\in \R^d: \langle(\widehat{A})^{-1} x, x\rangle < r^2 \big\}.
\end{equation*}

The following is the main theorem of this section.

\begin{theorem}\label{thm.large.r}
	Let $\theta\in (0,1/2]$ and $A$ satisfy  conditions  (\ref{ellip}), (\ref{perio}) and (\ref{cond.Lip}). There exists $C>0$ depending only on $d$ and $\Lambda$ such that
	if $\cL_\e (u_\e) = 0$ in $E_1$ and
	\begin{equation*}
	\int_{E_1} u_\e^2 \le N \int_{E_{\theta}} u_\e^2,
	\end{equation*}
	then for any  $CN^{\frac{1}{\beta-\frac{3}{4}}} \e < r<1-\sqrt{\e}$, we have
	\begin{equation*}
	\int_{E_r} u_\e^2 \le 2N \int_{E_{\theta r}} u_\e^2.
	\end{equation*}
\end{theorem}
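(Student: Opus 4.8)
The plan is to exploit two facts: solutions of the constant-coefficient homogenized equation $\cL_0 v=0$, viewed on the ellipsoids $E_\rho$, obey a \emph{loss-free} doubling inequality whose constant is monotone in the scale; and $u_\e$ is close to such a solution at every scale $\rho$, with a relative error that is a positive power of $\e/\rho$ and is therefore small throughout the range $\rho\ge CN^{1/(\beta-3/4)}\e$. For the harmonic input: since $\widehat A$ is constant, symmetric and positive definite, the linear map $x=\widehat A^{1/2}y$ sends $E_\rho$ onto the Euclidean ball $B_\rho$ and turns an $\cL_0$-solution $v$ into a harmonic function $w(y)=v(\widehat A^{1/2}y)$ with $\int_{E_\rho}v^2=(\det\widehat A)^{1/2}\int_{B_\rho}w^2$. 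For $w$ harmonic in $B_R$ and $\theta\in(0,1]$, the log-convexity of $\rho\mapsto\rho^{-d}\int_{B_\rho}w^2$ in $\log\rho$ (the Hadamard three-ball inequality / monotonicity of Almgren's frequency) shows that $\rho\mapsto\int_{B_\rho}w^2\big/\int_{B_{\theta\rho}}w^2$ is nondecreasing on $(0,R]$, whence
\[
\int_{E_\rho}v^2\ \le\ \Big(\tfrac{\int_{E_R}v^2}{\int_{E_{\theta R}}v^2}\Big)\int_{E_{\theta\rho}}v^2\qquad\text{for all }\rho\le R .
\]
It is the absence of any loss here, uniformly in $\theta$, that makes the final constant equal to $2N$ and independent of $\theta$ and of $\gamma$ (harmonic doubling does not see the Lipschitz constant).

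Next I would bring in homogenization. For $\rho<1-\sqrt\e$ let $v_\rho$ solve $\cL_0 v_\rho=0$ in $E_\rho$ with $v_\rho=u_\e$ on $\partial E_\rho$; the quantitative convergence rate in periodic homogenization — which uses only \eqref{ellip}--\eqref{perio}, not the smoothness of $A$ — gives $\norm{u_\e-v_\rho}_{L^2(E_\rho)}\le C(\e/\rho)^\mu\norm{u_\e}_{L^2(E_\rho)}$ for a universal exponent $\mu>0$ and $C=C(d,\Lambda)$, the condition $r<1-\sqrt\e$ supplying the boundary buffer for this estimate. Comparing $L^2$-norms of $u_\e$ and $v_\rho$ on $E_\rho$ and on the interior ellipsoid $E_{\theta\rho}$, and invoking the sharp harmonic doubling of $v_\rho$, converts a doubling bound at scale $2\rho$ into one at scale $\rho$. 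The catch is that such a norm comparison is only useful when $\int_{E_{\theta\rho}}u_\e^2$ is not too small, and bounding $\int_{E_{\theta\rho}}u_\e^2$ from below by $\int_{E_{2\rho}}u_\e^2$ costs a fixed power of the (not yet proven) doubling constant. Hence a bootstrap is forced.

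Accordingly, set $g(\rho)=\int_{E_\rho}u_\e^2$, fix $\beta\in(3/4,1)$, assume $\e/r\le\tfrac1C N^{-1/(\beta-3/4)}$ with $r<1-\sqrt\e$, and prove $g(\rho)\le 2N\,g(\theta\rho)$ by downward induction along the dyadic scales $\rho_k=2^k r$, $k=0,\dots,K$, with $\rho_K\in[\tfrac12(1-\sqrt\e),\,1-\sqrt\e]$. The base case homogenizes on $E_1$ and uses $g(1)\le N g(\theta)$: since $g(\theta)\ge N^{-1}g(1)\ge cN^{-1}\norm{u_\e}_{L^2(E_1)}^2$ while the error on $E_1$ is $\le C\e^\mu\norm{u_\e}_{L^2(E_1)}$, the hypothesis transfers to $v_1$ with constant $N(1+CN\e^\mu)$, propagates to all smaller scales by the sharp harmonic doubling, and returns to $u_\e$ near $\rho_K$ (where $u_\e\approx v_1$ relatively, both masses being $\asymp\norm{u_\e}^2$). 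For the inductive step, assuming $g(\rho_{k+1})\le 2N\,g(\theta\rho_{k+1})$, homogenize at scale $\rho_{k+1}=2\rho_k$; because $\theta\le\tfrac12$ the scale $\theta\rho_k$ lies inside $E_{\rho_{k+1}}$, so the sharp harmonic doubling at radius $\rho_{k+1}$ bounds the $v_{\rho_{k+1}}$-ratio at radius $\rho_k$ by its value at radius $\rho_{k+1}$, the inductive bound controls all transfer errors by $CN^{p}(\e/\rho_{k+1})^\mu$ for a fixed power $p$, and altogether $g(\rho_k)\le 2N\big(1+CN^{p}(\e/\rho_{k+1})^\mu\big)g(\theta\rho_k)$. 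Since $\sum_k(\e/\rho_{k+1})^\mu\le C(\e/r)^\mu$, the accumulated factor is $\le\exp\!\big(C'N^{p}(\e/r)^\mu\big)$; the hypothesis $\e/r\le\tfrac1C N^{-1/(\beta-3/4)}$ — in which the exponent $\tfrac1{\beta-3/4}$ exceeds $4$ — is exactly what is needed to keep this factor $\le2$, closing the induction and yielding $g(\rho)\le 2N g(\theta\rho)$ for every $\rho$ in the range.

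I expect the error bookkeeping in the last step to be the main obstacle. One cannot homogenize only once on $E_1$: at the small scales $\rho\sim\e^{1-}$ both $u_\e$ and the homogenized comparison function are exponentially small in $N$, so the merely power-of-$\e$ absolute error swamps the relevant masses unless $\theta$ is small — yet the theorem must hold uniformly for $\theta\in(0,1/2]$. Homogenizing afresh at each scale keeps the relevant error \emph{relative} (hence $\le C(\e/\rho)^\mu$) and geometrically summable, but it entangles the error estimate at scale $\rho$ with the very doubling inequality being proved at scale $2\rho$, which is what forces the inductive/budget argument above. The delicate quantitative point is to verify that the fixed power $p$ of $N$ produced by the norm-comparison lower bounds satisfies $p/\mu\le\tfrac1{\beta-3/4}$, so that the stated range — admissible for every $\beta\in(3/4,1)$ — indeed suffices; tracking this is where the constants $C=C(d,\Lambda)$ and the cutoff $CN^{1/(\beta-3/4)}\e$ in the statement are pinned down.
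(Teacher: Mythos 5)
Your proposal is correct and follows essentially the same route as the paper: approximate $u_\e$ at each scale by a solution of the homogenized equation (with relative error a power of $\e/\rho$ coming from the $L^2$ convergence rate and a $\sqrt{\e}$ boundary layer), use the loss-free, $\theta$-uniform doubling of harmonic functions obtained from log-convexity after the change of variables $x=\widehat A^{1/2}y$, and iterate across scales so that the multiplicative errors $CN(\e/\rho_k)^{\beta-3/4}$ sum geometrically to at most a fixed constant precisely when $\e/r\lesssim N^{-1/(\beta-3/4)}$. This is the content of the paper's Lemma 2.2 combined with the bookkeeping of Lemma 2.3 (where the accumulated doubling constants $A_k$ are shown to stay below $2N$ by induction), so your argument matches the paper's in both structure and the quantitative threshold.
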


This follows from Lemma \ref{lem.1step-dual} and Lemma \ref{lem.Ak}.

\begin{lemma}\label{lem.1step-dual}
	Let $\theta\in (0,1/2]$. Suppose $u_\e$ is a solution of $\cL_\e (u_\e) = 0$ in $E_1$ satisfying
	\begin{equation*}
	\int_{E_1} u_\e^2 \le N \int_{E_{\theta}} u_\e^2.
	\end{equation*}
	For any $\beta\in (3/4,1)$, there exist  $c,C>0$, depending only on $d,\Lambda$ and $\beta$, such that if $\e < cN^{-\frac{1}{\beta-\frac34}}$, then for any $r\in [\theta,1-\sqrt{\e}]$
	\begin{equation}\label{est.B1-B.5}
	\int_{E_r} u_\e^2 \le N(1+CN \e^{\beta-\frac34})\int_{E_{\theta r}} u_\e^2.
	\end{equation}
\end{lemma}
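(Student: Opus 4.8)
The plan is to transfer the \emph{exact} doubling inequality enjoyed by the constant‑coefficient operator $\cL_0$ to $u_\e$, controlling the transfer by a quantitative homogenization error; this is the quantitative incarnation of the compactness argument behind \cite[Theorem~3.1]{LS19}. First, after the linear change of variables $x=\widehat{A}^{1/2}y$ each ellipsoid $E_\rho$ becomes the ball $B_\rho$ and $\cL_0$ becomes $-\Delta_y$, so any $\cL_0$‑harmonic $u_0$ pulls back to a harmonic function and $\int_{E_\rho}u_0^2\big/\int_{E_{\theta\rho}}u_0^2$ equals the corresponding ratio over balls. By Almgren's frequency monotonicity for harmonic functions, the quotient $Q_0(\rho):=\int_{E_\rho}u_0^2\big/\int_{E_{\theta\rho}}u_0^2$ is non‑decreasing in $\rho$; in particular $Q_0(\rho)\le Q_0(1-\sqrt\e)$ and $\int_{E_{\theta^2}}u_0^2\ge Q_0(1-\sqrt\e)^{-1}\int_{E_\theta}u_0^2$ whenever $\cL_0 u_0=0$ in $E_{1-\sqrt\e}$.

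The homogenization input I would use is the quantitative convergence rate from periodic homogenization that relies only on the ellipticity of $A$ (so that the constants are free of $\gamma$; see e.g.\ \cite{S18}): it produces a $\cL_0$‑harmonic $u_0$ on $E_{1-\sqrt\e}$ with
\[
\int_{E_{1-\sqrt\e}}|u_\e-u_0|^2\ \le\ C_\beta\,\e^{2(\beta-\frac34)}\int_{E_1}u_\e^2 ,
\]
the collar $E_1\setminus E_{1-\sqrt\e}$ being the homogenization boundary layer — which is exactly why the statement restricts to $r\le 1-\sqrt\e$, and the exponent here is what will dictate the exponent in the conclusion. In addition, since $E_\theta\Subset E_1$, the interior sup bound for $u_\e$ (De Giorgi–Nash, uniform in $\e$) gives $\sup_{E_\theta}|u_\e|^2\lesssim\theta^{-d}\int_{E_1}u_\e^2$, whence $\int_{E_\theta\setminus E_{\theta(1-\sqrt\e)}}u_\e^2\lesssim\sqrt\e\int_{E_1}u_\e^2$; combined with the hypothesis this yields $\int_{E_{\theta(1-\sqrt\e)}}u_\e^2\ge(1-CN\sqrt\e)\int_{E_\theta}u_\e^2$.

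I would then carry out the transfer using $(a+b)^2\le(1+t)a^2+(1+t^{-1})b^2$ with a small parameter $t\sim N\e^{\beta-\frac34}$. First transfer the hypothesis to $u_0$: the last two displays together with $\int_{E_1}u_\e^2\le N\int_{E_\theta}u_\e^2$ give $Q_0(1-\sqrt\e)\le N(1+CN\e^{\beta-\frac34})$ — it is the comparison of $u_\e$ and $u_0$ at the \emph{nearly equal} radii $1-\sqrt\e$ and $\theta(1-\sqrt\e)$, via the thin‑annulus estimate, that makes this \emph{lossless} in $N$ (a cruder comparison would only give $Q_0\lesssim N$ with a dimensional constant, which is not sharp enough). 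Combining with the monotonicity of $Q_0$ yields $\int_{E_{\theta r}}u_\e^2\gtrsim N^{-2}\int_{E_1}u_\e^2$ for every $r\in[\theta,1-\sqrt\e]$. Finally, for such $r$,
\[
\int_{E_r}u_\e^2\ \le\ (1+t)\,Q_0(r)\int_{E_{\theta r}}u_0^2+C(1+t^{-1})\,\e^{2(\beta-\frac34)}\int_{E_1}u_\e^2 ,
\]
and bounding $\int_{E_{\theta r}}u_0^2\le(1+t)\int_{E_{\theta r}}u_\e^2+C(1+t^{-1})\e^{2(\beta-\frac34)}\int_{E_1}u_\e^2$, using $Q_0(r)\le N(1+CN\e^{\beta-\frac34})$ and absorbing the error terms through $\int_{E_{\theta r}}u_\e^2\gtrsim N^{-2}\int_{E_1}u_\e^2$, produces $\int_{E_r}u_\e^2\le N(1+CN\e^{\beta-\frac34})\int_{E_{\theta r}}u_\e^2$; the hypothesis $\e<cN^{-\frac{1}{\beta-\frac34}}$ is precisely what makes all the error terms genuine lower‑order perturbations (keeping the denominators positive and the $N^{-2}$ lower bound intact under subtraction).

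The main obstacle is not the structure but the bookkeeping: the transfer must be \emph{lossless} in the doubling constant $N$, which forces one to (i) compare $u_\e$ and $u_0$ only at nearly equal radii — handled by the thin‑annulus estimate — and (ii) control the ratio $\int_{E_1}u_\e^2\big/\int_{E_{\theta r}}u_\e^2$, morally ``the doubling being proved'', purely through the \emph{exact} monotonicity of $Q_0$, so that no circularity enters. A secondary technical point is that the convergence rate must be of a purely $L^2$ nature — it cannot involve an $H^1$ norm of $u_0$ or $u_\e$ on the $\sqrt\e$‑collar, where the energy can be of size $O(\e^{-1/2})$ relative to the $L^2$ mass and would destroy any gain — which confines the usable exponent to $\beta-\frac34<\frac14$ and, together with the two powers of $N$ acquired in the transfer, yields the precise form $N(1+CN\e^{\beta-\frac34})$ and the threshold $\e<cN^{-\frac{1}{\beta-\frac34}}$.
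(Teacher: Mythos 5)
Your proposal is correct and follows essentially the same route as the paper: approximate $u_\e$ on $E_{1-\sqrt{\e}}$ by an $\cL_0$-harmonic $u_0$ with relative $L^2$ error $O(\e^{\beta-\frac34})$, transfer the doubling hypothesis to $u_0$ losslessly (your thin-annulus step plays the role of the paper's interior estimate shrinking $E_\theta$ to $E_{\theta(1-\sqrt{\e})}$), propagate by the monotonicity of the harmonic doubling ratio, and transfer back using $r\ge\theta$ to absorb the errors. The one step you treat as a black box — the existence of $u_0$ with $\int_{E_{1-\sqrt{\e}}}|u_\e-u_0|^2\le C\e^{2(\beta-\frac34)}\int_{E_1}u_\e^2$ — is not an off-the-shelf convergence rate: the paper produces it by a Caccioppoli estimate on a collar of width $t=\sqrt{\e}$, a co-area selection of a good slice $\partial E_{1-c_0t}$ with $\|u_\e\|_{H^1(\partial E_{1-c_0t})}^2\lesssim t^{-3}\int_{E_1}u_\e^2$, and then the $L^2$ rate of Kenig--Lin--Shen for the Dirichlet problem with data $u_\e$ on that slice; the $t^{-3}$ cost of the slice is exactly where the $\e^{-3/4}$ loss enters, so this derivation should be spelled out rather than cited.
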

\begin{proof}
	Let $t>0$, to be determined. Since $\txt{dist}(\partial E_{1-t}, \partial E_1) \le Ct$, by the Caccioppoli inequality, we have
	\begin{equation*}
	\int_{E_{1-t}} |\nabla u_\e|^2 + \int_{E_{1-t}}  u_\e^2 \le \frac{C}{t^2} \int_{E_1} u_\e^2 \le \frac{CN}{t^2} \int_{E_{\theta}} u_\e^2.
	\end{equation*}
	By the co-area formula, we can find some $c_0 \in (1,2)$ so that
	\begin{equation*}
	\int_{\partial E_{1-c_0t}} |\nabla u_\e|^2 + \int_{\partial E_{1-c_0t}}  u_\e^2 \le \frac{CN}{t^3} \int_{E_{\theta}} u_\e^2.
	\end{equation*}
	Without loss of generality, let us simply assume $c_0 = 1$. Hence $u_\e|_{\partial E_{1-t}} \in H^1(\partial E_{1-t})$. By \cite[Theorem 1.1]{KLS12},
	\begin{equation}\label{est.ue-u0}
	\int_{E_{1-t}} (u_\e - u_0)^2 \le C\e^{2\beta} \norm{u_\e}_{H^1(\partial E_{1-t})}^2 \le \frac{CN\e^{2\beta}}{t^3} \int_{E_{\theta}} u_\e^2 ,
	\end{equation}
	where $u_0$ is the solution of $\cL_0 (u_0) = 0$ and $u_0 = u_\e $ on $\partial E_{1-t}$ and $\beta\in (0,1)$ is arbitrary.
	
	As a result, we have
	\begin{equation}\label{est.u0.B2t}
	\begin{aligned}
	\norm{u_0}_{L^2( E_{1-t})} & \le \norm{u_\e}_{L^2(E_1)} + \norm{u_\e - u_0}_{L^2(E_{1-t})} \\
	& \le \sqrt{N} (1+C\e^\beta t^{-3/2}) \norm{u_\e}_{L^2(E_{\theta})}.
	\end{aligned}
	\end{equation}
	Also,
	\begin{equation*}
	\norm{u_\e}_{L^2(E_{\theta})} \le \norm{u_0}_{L^2(E_{\theta})} + C\sqrt{N}\e^\beta t^{-3/2} \norm{u_\e}_{L^2(E_{\theta})}.
	\end{equation*}
	We will choose $t<1$ so that $C\sqrt{N}\e^\beta t^{-3/2} < 1/2$. Consequently,
	\begin{equation}\label{est.B1.ueu0}
	\norm{u_\e}_{L^2(E_{\theta})} \le \big( 1 - C\sqrt{N}\e^\beta t^{-3/2}\big)^{-1} \norm{u_0}_{L^2(E_{\theta})} \le \big( 1 +C \sqrt{N}\e^\beta t^{-3/2}\big)\norm{u_0}_{L^2(E_{\theta})}.
	\end{equation}
	Inserting this into (\ref{est.u0.B2t}), we have
	\begin{equation}\label{est.u0}
	\begin{aligned}
	\norm{u_0}_{L^2(E_{1-t})} & \le \sqrt{N} \big( 1 + C\sqrt{N}\e^\beta t^{-3/2} \big)(1+C\e^\beta t^{-3/2}) \norm{u_0}_{L^2(E_{\theta})}\\
	& \le \sqrt{N} \big( 1 + C\sqrt{N}\e^\beta t^{-3/2} \big) \norm{u_0}_{L^2(E_{\theta})},
	\end{aligned}
	\end{equation}
	where we have used the simple fact that $(1+a)^2 \le 1+3a$ for $a\in [0,1]$ and enlarged the constant $C$ in the last inequality.
	
	Next, by the interior $L^\infty$ estimate for $\widehat{A}$-harmonic functions, we have
	\begin{equation*}
	\begin{aligned}
	\norm{u_0}_{L^2(E_{\theta})} &\le \norm{u_0}_{L^2(E_{\theta(1-t)})} + \norm{u_0}_{L^2(E_{\theta}\setminus E_{\theta(1-t)})} \\
	& \le \norm{u_0}_{L^2(E_{\theta(1-t)})} + C\sqrt{\theta t} \norm{u_0}_{L^2(E_{1-t})}
	\end{aligned}
	\end{equation*}
	Inserting this into (\ref{est.u0}) and choosing $t$ sufficiently small so that $C\sqrt{N}\sqrt{\theta t} < 1/2$, we obtain
	\begin{equation*}
	\norm{u_0}_{L^2(E_{1-t})} \le \sqrt{N} \big( 1 + \sqrt{N}C\e^\beta t^{-3/2} \big) \big( 1+C\sqrt{N} \sqrt{\theta t} \big) \norm{u_0}_{L^2(E_{\theta(1-t)})}.
	\end{equation*}
	
	Choose $t = \sqrt{\e}$. We arrive at
	\begin{equation}\label{est.Double.u0}
	\norm{u_0}_{L^2(E_{1-t})} \le \sqrt{N}(1+C\sqrt{N}\e^{\beta-3/4}) \norm{u_0}_{L^2(E_{\theta(1-t)})}.
	\end{equation}
	Note that the above calculation goes through only if $\sqrt{N}C\e^\beta t^{-3/2} < 1/2$ and $C\sqrt{N}\sqrt{t} < 1/2$. This implies that we require
	\begin{equation*}
	\e \le C^{-1}N^{\frac{-1}{2(\beta-3/4)}},
	\end{equation*}
	for some large constant $C$.
	
	Recall that $u_0$ is a weak solution of $\cL_0(u_0) = 0$ in $E(1-\sqrt{\e})$. Let $w_0(x) = u_0(\widehat{A}^{\frac12} x)$. Then $\Delta w_0 = 0$ in $B_{1-\sqrt{\e}}$ and (\ref{est.Double.u0}) is equivalent to
	\begin{equation}\label{est.1-sqrte}
		\norm{w_0}_{L^2(B_{1-\sqrt{\e}})} \le \sqrt{N}(1+C\sqrt{N}\e^{\beta-3/4}) \norm{w_0}_{L^2(B_{\theta(1-\sqrt{\e})})}.
	\end{equation}
	
	Now, as a consequence of the well-known three-sphere theorem for harmonic functions,
	\begin{equation}\label{est.phi.u0}
	\varphi(r) = \log_2 \fint_{B_{2^r}} w_0^2
	\end{equation}
	is a convex function in $(-\infty,0]$ and therefore $\varphi(t) - \varphi(t-c)$ is a nondecreasing function in $t$, for any fixed $c>0$. Hence, we obtain from (\ref{est.1-sqrte}) that for any $r\in (0,1-\sqrt{\e})$,
	\begin{equation*}
	\norm{w_0}_{L^2(B_r)} \le \sqrt{N}(1+C\sqrt{N}\e^{\beta-3/4}) \norm{w_0}_{L^2(B_{\theta r})}.
	\end{equation*}
	(The doubling index with $\theta$ is an increasing function of radius.) Again, this is equivalent to
	\begin{equation}\label{est.u0Er}
	\norm{u_0}_{L^2(E_r)} \le \sqrt{N}(1+C\sqrt{N}\e^{\beta-3/4}) \norm{u_0}_{L^2(E_{\theta r})}
	\end{equation}
	for any $r\in (0, 1-\sqrt{\e})$.
	
	Now, let $r\in [\theta,1-\sqrt{\e})$. It follows by (\ref{est.u0Er}) that
	\begin{equation*}
	\begin{aligned}
	\norm{u_\e}_{L^2(E_r)} &\le \norm{u_\e - u_0}_{L^2(E_r)} + \norm{u_0}_{L^2(E_r)} \\
	& \le C \sqrt{N} \e^{\beta-3/4} \norm{u_\e}_{L^2(E_{\theta})} + \sqrt{N}(1+C\sqrt{N}\e^{\beta-3/4}) \norm{u_0}_{L^2(E_{\theta r})}\\
	& \le C \sqrt{N} \e^{\beta-3/4} \norm{u_\e}_{L^2(E_{r})} + \sqrt{N}(1+C\sqrt{N}\e^{\beta-3/4}) \norm{u_0 - u_\e}_{L^2(E_{\theta r})} \\
	& \qquad + \sqrt{N}(1+C\sqrt{N}\e^{\beta-3/4}) \norm{u_\e}_{L^2(E_{\theta r})} \\
	& \le C N \e^{\beta-3/4} (1+C\sqrt{N}\e^{\beta-3/4}) \norm{u_\e}_{L^2(E_r)} + \sqrt{N}(1+C\sqrt{N}\e^{\beta-3/4}) \norm{u_\e}_{L^2(E_{\theta r})},
	\end{aligned}
	\end{equation*}
	where we have used the fact $E_{\theta} \subset E_r$ in the third inequality and (\ref{est.ue-u0}) in the second and last inequalities.
	Assume further that $\e \le cN^{-1/(\beta-3/4)}$. Then
	\begin{equation*}
	\begin{aligned}
	\norm{u_\e}_{L^2(E_r)} &\le \frac{\sqrt{N}(1+C\sqrt{N}\e^{\beta-3/4})}{1-C N\e^{\beta-3/4}}  \norm{u_\e}_{L^2(E_{\theta r})} \\
	& \le  \sqrt{N}(1+CN \e^{\beta-3/4})\norm{u_\e}_{L^2(E_{\theta r})}.
	\end{aligned}\end{equation*}
	This proves the lemma.
\end{proof}

Now, if $\e < cN^{-1/(\beta-3/4)}$, the above lemma allows us to iterate (\ref{est.B1-B.5}) down to $r = c^{-1} N^{1/(\beta-3/4)} \e$. Precisely, if $r = \theta ^{k} > C N^{1/(\beta-3/4)}\e $ and
\begin{equation*}
\int_{E_{r}} u_\e^2 \le A_{k} \int_{E_{\theta r}} u_\e^2,
\end{equation*}
with $A_0 = N$, then
\begin{equation*}
\int_{E_{\theta r}} u_\e^2 \le A_{k+1} \int_{E_{\theta^2 r}} u_\e^2,
\end{equation*}
 where
\begin{equation*}
A_{k+1} = A_{k}(1+CA_{k} (\theta^{-k} \e)^{\beta-3/4}),
\end{equation*}
provided $A_k (\theta^{-k} \e)^{\beta-3/4} < c$.

\begin{lemma}\label{lem.Ak}
	For all $k\le k_0$ with $\theta^{-k_0}\e \simeq c_1 N^{-1/(\beta-3/4)}$ and $c_1>0$ sufficiently small, one has
	\begin{equation*}
	A_k \le 2N.
	\end{equation*}
\end{lemma}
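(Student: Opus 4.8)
The plan is to establish $A_k\le 2N$ for all $k\le k_0$ by induction on $k$, exploiting that the recursion for $A_{k+1}$ telescopes into a product. Set $\alpha:=\beta-\tfrac{3}{4}\in(0,\tfrac{1}{4})$. Unwinding $A_{k+1}=A_k\bigl(1+CA_k(\theta^{-k}\e)^{\alpha}\bigr)$ from $A_0=N$ gives the closed form $A_k=N\prod_{j=0}^{k-1}\bigl(1+CA_j(\theta^{-j}\e)^{\alpha}\bigr)$. The base case $A_0=N\le 2N$ is immediate, so I would fix $k\le k_0$, assume $A_0,\dots,A_{k-1}$ are all defined via the recursion and all bounded by $2N$, and estimate this product.

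For the inductive step I would substitute $A_j\le 2N$ into the product and use $1+x\le e^x$ to get $A_k\le N\exp\bigl(2CN\,\e^{\alpha}\sum_{j=0}^{k-1}\theta^{-j\alpha}\bigr)$. Since $\theta\le\tfrac12$, the sum is geometric with ratio $\theta^{-\alpha}>1$, hence at most $(1-\theta^{\alpha})^{-1}$ times its last term, so $\e^{\alpha}\sum_{j=0}^{k-1}\theta^{-j\alpha}\le(1-\theta^{\alpha})^{-1}(\theta^{-(k-1)}\e)^{\alpha}\le(1-\theta^{\alpha})^{-1}(\theta^{-k_0}\e)^{\alpha}$. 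The stopping rule $\theta^{-k_0}\e\simeq c_1 N^{-1/\alpha}$ then gives $(\theta^{-k_0}\e)^{\alpha}\le C'c_1^{\alpha}N^{-1}$, and consequently the exponent is at most $C''c_1^{\alpha}$, where $C'$ and $C''$ depend only on $d,\Lambda,\beta$ and the fixed $\theta$ — crucially, independent of both $k$ and $N$, the prefactor $N$ being exactly cancelled by the $N^{-1}$ coming from the cap. Choosing $c_1$ small enough that $C''c_1^{\alpha}\le\ln 2$ yields $A_k\le N e^{\ln 2}=2N$, closing the induction.

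Before running this I would check that the recursion is legitimately available at each step $j\le k_0-1$, i.e. that the two requirements built into its derivation hold: the stopping condition $\theta^{j}>CN^{1/\alpha}\e$, and the smallness hypothesis $A_j(\theta^{-j}\e)^{\alpha}<c$ of Lemma \ref{lem.1step-dual} applied with $N$ replaced by $A_j$ and $\e$ replaced by $\e_j=\theta^{-j}\e$ (after the rescaling $x\mapsto\theta^{j}x$, which turns $\cL_\e(u_\e)=0$ in $E_{\theta^j}$ into $\cL_{\e_j}(v)=0$ in $E_1$ with $v=u_\e(\theta^j\,\cdot\,)$, and turns the doubling between $E_{\theta^j}$ and $E_{\theta^{j+1}}$ into the doubling between $E_1$ and $E_\theta$). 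Both follow from $\theta^{-j}\e\le\theta^{-k_0}\e\simeq c_1 N^{-1/\alpha}$ together with the inductive bound $A_j\le 2N$, provided $c_1$ is taken smaller than $C^{-1}$ and small enough to force $2C'c_1^{\alpha}<c$. Only finitely many such smallness constraints on $c_1$ arise, each depending only on $d,\Lambda,\beta$ and $\theta$, so they can all be imposed simultaneously and the induction runs through $k=k_0$.

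I do not anticipate a genuine obstacle: the argument is essentially bookkeeping organized around the product formula. The one point requiring attention is the uniformity just discussed — a priori $2CN\,\e^{\alpha}\sum_j\theta^{-j\alpha}$ looks like it could grow with $N$, and the reason it does not is that the number of iteration steps $k_0$ and the size of $\e$ are tied together by $\theta^{-k_0}\e\simeq c_1 N^{-1/\alpha}$, so the geometric sum saturates at size comparable to $N^{-1}$ and absorbs the factor $N$. All the analytic input (Caccioppoli, the co-area selection of a good slice, the convergence-rate estimate of \cite{KLS12}, and the three-sphere theorem) has already been packaged into Lemma \ref{lem.1step-dual} and the recursion preceding the statement, so nothing further is needed here.
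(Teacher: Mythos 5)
Your proposal is correct and follows essentially the same route as the paper: an induction on $k$ in which the recursion is telescoped (you via a product and $1+x\le e^x$, the paper via summing the differences $B_{k+1}-B_k$ with $B_k=A_k/N$), with the key point in both cases being that the geometric sum $\sum_j(\theta^{-j}\e)^{\beta-3/4}$ is controlled by its largest term $(\theta^{-k_0}\e)^{\beta-3/4}\simeq c_1^{\beta-3/4}N^{-1}$, whose factor $N^{-1}$ cancels the prefactor $N$ so that a small choice of $c_1$ closes the induction. Your additional check that $A_j(\theta^{-j}\e)^{\beta-3/4}<c$ at every step, so the recursion is legitimately available, matches the verification the paper performs at the end of its proof.
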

\begin{proof}
	Define $B_k = A_k/N$. Then $B_0 = 1$ and
	\begin{equation*}
	B_{k+1} = B_{k}(1+CB_{k} \theta^{-k(\beta-3/4)} \e^{\beta-3/4} N ) \le B_{k}(1+c_1 C B_{k}\theta^{(k_0-k)(\beta-3/4)}  ).
	\end{equation*}
	It follows that
	\begin{equation*}
	B_{k+1} - B_{k} \le c_1 C B_{k}^2 \delta^{k_0-k},
	\end{equation*}
	where $\delta = \theta^{\beta-3/4}\le (1/2)^{\beta-3/4} < 1$. The above inequality yields
	\begin{equation}\label{est.Bk+1}
	B_{k+1} \le B_0 + \sum_{j=0}^{k} c_1 C B_{j}^2 \delta^{k_0-j}.
	\end{equation}
	We prove by induction that if $c_1$ is sufficiently small, then $B_k\le 2$ and $A_k (\theta^{-k} \e)^{\beta-3/4}\le 2c_1$ for all $k\le k_0$. Actually, if
	\begin{equation*}
	c_1 \le (4C \sum_{j=0}^\infty \delta^{j})^{-1} = \frac{1-\delta}{4C},
	\end{equation*}
	and $B_j\le 2$ for all $1\le j\le k$, then
	it is easy to see from (\ref{est.Bk+1}) that $B_{k+1} \le 2$ and $A_k (\theta^{-k} \e)^{\beta-3/4} \le 2N (\theta^{-k_0} \e)^{\beta-3/4} \le 2c_1$. This proves the desired estimate.
\end{proof}

\begin{remark}
	Observe that in the above proof, the smoothness of the coefficients has not been used explicitly, except for (\ref{est.ue-u0}) by \cite[Theorem 1.1]{KLS12}. But this actually can be replaced by, e.g., \cite[Theorem 1.4]{NX19} with $m=1$, which does not require any smoothness.
\end{remark}

\begin{remark}
	It is not difficult to see that (\ref{est.B1-B.5}) implies the following three-ball inequality with an error term
	\begin{equation}\label{est.new3ball}
	\norm {u_\e}_{L^2(E_{\theta r})} \le \norm {u_\e}_{L^2(E_{r})}^{\frac12} \norm {u_\e}_{L^2(E_{\theta^2 r})}^{\frac12} + C \e^{\frac{1}{2}(\beta-\frac34)} \norm {u_\e}_{L^2(E_{r})},
	\end{equation}
	for any $\theta \in (0,\frac12]$ and $\beta \in (\frac34,1)$. Compared to the three-ball inequalities in \cite{AKS20} (see Theorem \ref{thm.AKS} below) and \cite{KZ19}, our major term on the right-hand side of (\ref{est.new3ball}) is sharp. In particular, if $\e \to 0$, (\ref{est.new3ball}) recovers precisely the three-ball inequality for $\widehat{A}$-harmonic functions.
\end{remark}

\begin{theorem}\label{thm.Br}
	Given arbitrary $\theta\in (0,\Lambda/2]$, there exists $C>0$ depending only on $d$ and $\Lambda$ such that
	if $\cL_\e (u_\e) = 0$ in $B_1$ and
	\begin{equation}\label{est.B1Btheta}
	\int_{B_1} u_\e^2 \le N \int_{B_{\theta}} u_\e^2,
	\end{equation}

	then for any  $CN^{\frac{1}{\beta-\frac{3}{4}}} \e < r<1$, we have
	\begin{equation*}
	\int_{B_r} u_\e^2 \le 8N^3 \int_{B_{\theta r}} u_\e^2.
	\end{equation*}
\end{theorem}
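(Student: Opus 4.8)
The idea is to transfer Theorem~\ref{thm.large.r}, which is stated for the ellipsoids $E_\rho$, to Euclidean balls. The only structural input is that the homogenized matrix $\widehat A$ obeys the same ellipticity bounds $\Lambda|\xi|^2\le\langle\widehat A\xi,\xi\rangle\le|\xi|^2$ as $A$, hence $|\xi|^2\le\langle(\widehat A)^{-1}\xi,\xi\rangle\le\Lambda^{-1}|\xi|^2$, which yields the sandwich
\[
B_{\sqrt{\Lambda}\,\rho}\subseteq E_\rho\subseteq B_\rho\qquad\text{for every }\rho>0 .
\]
First I would recast the hypothesis. Put $\theta_0:=\theta/\sqrt{\Lambda}$; since $\theta\le\Lambda/2$ and $\Lambda\le1$ we have $\theta_0\in(0,1/2]$, so Theorem~\ref{thm.large.r} is available with doubling ratio $\theta_0$. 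Using $E_1\subseteq B_1$, $B_\theta\subseteq E_{\theta_0}$ and (\ref{est.B1Btheta}),
\[
\int_{E_1}u_\e^2\le\int_{B_1}u_\e^2\le N\int_{B_\theta}u_\e^2\le N\int_{E_{\theta_0}}u_\e^2 ,
\]
so after enlarging $C$ we get $\int_{E_\rho}u_\e^2\le 2N\int_{E_{\theta_0\rho}}u_\e^2$ for all $\rho$ with $CN^{1/(\beta-3/4)}\e<\rho<1-\sqrt{\e}$.

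The key step is the conversion back to balls, which is where the factor $8N^3=(2N)^3$ is produced. For $r$ with $r/\sqrt{\Lambda}<1-\sqrt{\e}$ and $r>CN^{1/(\beta-3/4)}\e$ (with $C$ large, possibly depending also on $\theta$), the sandwich gives $B_r\subseteq E_{r/\sqrt{\Lambda}}$, and iterating the ellipsoid doubling $k$ times followed by $E_s\subseteq B_s$,
\[
\int_{B_r}u_\e^2\le\int_{E_{r/\sqrt{\Lambda}}}u_\e^2\le(2N)^k\int_{E_{\theta_0^k r/\sqrt{\Lambda}}}u_\e^2\le(2N)^k\int_{B_{\theta_0^k r/\sqrt{\Lambda}}}u_\e^2 ,
\]
provided every intermediate radius $\theta_0^j r/\sqrt{\Lambda}$, $0\le j\le k-1$, lies in $(CN^{1/(\beta-3/4)}\e,1-\sqrt{\e})$. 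Since $\theta_0^k r/\sqrt{\Lambda}=\theta^k r/\Lambda^{(k+1)/2}$, the last ball is contained in $B_{\theta r}$ as soon as $\theta^{k-1}\le\Lambda^{(k+1)/2}$; because $\theta\le\Lambda/2$, a short computation with logarithms shows the smallest admissible integer $k$ is always at most $3$. Taking $k=3$ gives the claim for all $r\in\bigl(CN^{1/(\beta-3/4)}\e,\ \sqrt{\Lambda}(1-\sqrt{\e})\bigr)$.

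It remains to treat $\sqrt{\Lambda}(1-\sqrt{\e})\le r<1$, where $B_r$ is too large to sit inside an admissible ellipsoid. Here I would use (\ref{est.B1Btheta}) directly, $\int_{B_r}u_\e^2\le\int_{B_1}u_\e^2\le N\int_{B_\theta}u_\e^2$, and it then suffices to bound $\int_{B_\theta}u_\e^2\le 4N^2\int_{B_{\theta r}}u_\e^2$. Since $\theta\le\Lambda/2<\sqrt{\Lambda}(1-\sqrt{\e})$, the conversion above applies at radius $\theta$; and since $\theta\ge\theta r\ge\theta\sqrt{\Lambda}(1-\sqrt{\e})$, the target radius $\theta r$ is within a $\Lambda$-dependent bounded factor of $\theta$, so now only $k=2$ ellipsoid steps are required (again using $\theta\le\Lambda/2$ and $1-\sqrt{\e}\ge\tfrac14$ for small $\e$). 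This gives $\int_{B_r}u_\e^2\le 4N^3\int_{B_{\theta r}}u_\e^2\le 8N^3\int_{B_{\theta r}}u_\e^2$. (If $CN^{1/(\beta-3/4)}\e\ge1$ the range of $r$ is empty and nothing is to be proved.)

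The argument is elementary and all the content sits in Theorem~\ref{thm.large.r}; the one place that genuinely requires attention is the bookkeeping in the conversion, namely ensuring that the number of ellipsoid-to-ellipsoid iterations never exceeds $k=3$ so that the loss is exactly $(2N)^3$. This is where the normalization $\theta\le\Lambda/2$ (as opposed to a generic $\theta\le\tfrac12$) is essential. A minor additional nuisance is the behaviour near the endpoints: near $r\sim\e$ the constant $C$ must be enlarged (in general $\theta$-dependently) so that all invocations of Theorem~\ref{thm.large.r} occur at admissible radii, and near $r\sim1$ one must fall back on the hypothesis as above.
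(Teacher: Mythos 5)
Your proof is correct and follows essentially the same route as the paper: sandwich balls between ellipsoids via $B_{\sqrt{\Lambda}\rho}\subseteq E_\rho\subseteq B_\rho$, transfer the hypothesis to $E_1$ and $E_{\theta/\sqrt{\Lambda}}$, iterate Theorem~\ref{thm.large.r} three times (using $\theta\le\Lambda/2$ to absorb the radius loss), and fall back on the hypothesis plus one or two ellipsoid steps for $r$ near $1$. Your splitting point $\sqrt{\Lambda}(1-\sqrt{\e})$ for the top range is in fact slightly more careful than the paper's $1-\sqrt{\e}$ (which would put $r/\sqrt{\Lambda}$ outside the admissible range of Theorem~\ref{thm.large.r} for $r\in(\sqrt{\Lambda}(1-\sqrt{\e}),1-\sqrt{\e})$), but this is a cosmetic difference.
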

\begin{proof}
	This is deduced from Theorem \ref{thm.large.r} and the fact
	\begin{equation}\label{est.ErBr}
	B_{\sqrt{\Lambda}r} \subset E_r \subset B_{r}.
	\end{equation}
	Indeed, (\ref{est.B1Btheta}) and (\ref{est.ErBr}) imply
	\begin{equation*}
	\int_{E_1} u_\e^2 \le N \int_{{B_{\theta}}} u_\e^2 \le N \int_{{E_{\theta/\sqrt{\Lambda}}}} u_\e^2.
	\end{equation*}
	Note that $\theta\in (0,\Lambda/2]$ implies $\theta/\sqrt{\Lambda} \in (0,\sqrt{\Lambda}/2]\subset (0,1/2]$.
Now, if $r\in (CN^{\frac{1}{\beta-\frac{3}{4}}} \e , 1-\sqrt{\e})$, we may apply Theorem \ref{thm.large.r} (three times) with $\theta' = \theta/\sqrt{\Lambda}$ and obtain
	\begin{equation*}
	\int_{B_r} u_\e^2 \le \int_{E_{r/\sqrt{\Lambda}}} u_\e^2 \le (2N)^3 \int_{E_{\theta^3 \Lambda^{-2} r}} u_\e^2 \le 8N^3 \int_{E_{\theta r}} u_\e^2 \le 8N^3\int_{B_{\theta r}} u_\e^2.
	\end{equation*}
	
	For $r\in [1-\sqrt{\e},1]$ (without loss of generality, assume $\e < 1/4$), we may apply Theorem \ref{thm.large.r} once to obtain
	\begin{equation*}
	\int_{B_r} u_\e^2 \le \int_{B_1} u_\e^2 \le N \int_{B_\theta} u_\e^2 \le 2N^2 \int_{B_{\theta^2 \Lambda^{-1} }} u_\e^2 \le 2N^2 \int_{B_{\theta r }} u_\e^2.
	\end{equation*}
	This ends the proof.
\end{proof}

\section{Dependence on the Lipschitz constant}

In this section, we derive the doubling inequality with a large Lipschitz constant, which will be used in the Step 3 of the proof of Theorem \ref{thm.main}. We aim to show how the Lipschitz character of the coefficients plays a role in quantitative unique continuation, which seems to be largely unexplored. Assume that
\begin{align}
\mathcal{L}_1 (u) =-\nabla \cdot \big(A(x)\nabla u\big)=0,
\label{newelli}
\end{align}
where $A(x)$ satisfies (\ref{ellip}) and
\begin{align}
|A({x})-A({y})|\leq L |x-y|
\label{lip2}
\end{align}
for some large positive constant $L>1$. We emphasize that throughout this section, the constant $C$ will never depend on $L$. Since the $L^\infty$ norm and the $L^2$ norm of $u$ are comparable,
parallel to the assumption (\ref{est.B1}),
we may assume the following
\begin{align}
\|u\|_{L^\infty ( B_1)}\leq M \|u\|_{L^\infty ( B_{\theta})}
\label{assdoub}
\end{align}
for some large constant $M>1$.


In order to define the frequency function later, we need to construct the geodesic polar coordinates. The construction of polar coordinates has been obtained in \cite{AKS62}. We adopt a slightly different construction of the metric from \cite[Chapter 3.1]{HL13}. We follow the construction with an eye on the explicit dependence of the Lipschtiz constant $L$. For $d\geq 3$, we define the Lipschitz metric $\hat{g}=\hat{g}_{ij}(x) d x_i\otimes d x_j$ as follows
\begin{align}
\hat{g}_{ij}({x})=a^{ij}({x}) \det (A({x}))^{\frac{1}{d-2}},
\end{align}
where $a^{ij}({x})$ is the entry of $A^{-1}({x})$. The case $d=2$ will be discussed in Remark \ref{rmk.2d}.
Note that $\hat{g}$ is Lipsthitz continuous and satisfies
\begin{align}
|\hat{g}({x})- \hat{g}({y})|\leq C L |x-y|.
\end{align}

Define
\begin{align}
r^2=r^2(x)=\hat{g}_{ij}(0) x_i x_j
\label{dist}
\end{align}
and
\begin{align*}
\psi(x)=\hat{g}^{kl}({x})\frac{\partial r}{\partial x_k} \frac{\partial r}{\partial x_l}.
\end{align*}
From (\ref{dist}), we can also write
\begin{align*}
\psi(x)=\frac{1}{r^2}\hat{g}^{kl}({x}) \hat{g}_{ik}(0)\hat{g}_{jl}(0)x_i x_j.
\end{align*}
Thus, we can check that $\psi(x)$ is a non-negative Lipschitz function satisfying
\begin{align}
|\psi(x)- \psi(y)|\leq CL |x-y|,
\end{align}
where $C$ depends only on $d$ and $\Lambda$. We introduce a new metric  ${g}={g}_{ij}(x) d x_i\otimes d x_j$ by setting
\begin{align}
{g}_{ij}(x) =\psi(x) \hat{g}_{ij}({x}).
\label{metricha}
\end{align}
We can write the metric $g$ in terms of the intrinsic geodesic polar coordinates $(r, \sigma_1, \cdots, \sigma_{d-1})$,
\begin{align}
g= d r \otimes dr +r^2 b_{ij}(r, \sigma) d \sigma_i \otimes d \sigma_j,
\label{contrupoly}
\end{align}
where $b_{ij}$ satisfies
\begin{align}
|\partial_r b_{ij}(r, \sigma)|\leq CL, \quad \mbox{for} \ i, j=1, \cdots, d-1,
\label{gamma}
\end{align}
and $C$ depends only on $d$ and $\Lambda$.

The existence of the geodesic polar coordinates $(r, \ \sigma)$ allows us to consider geodesic balls. Denote by $\mathbb B_r$ the geodesic ball in the metric $g$ of radius $r$ and centred at the origin. In particular, from (\ref{dist}) and (\ref{contrupoly}), $r(x)=\sqrt{\hat{g}_{ij}(0) x_i x_j}$ is the geodesic distance from $x$ to the origin in the new metric $g$. Thus, it is conformal to the usual Euclidean ball. For convenience of presentation, we may assume that the geodesic balls coincide with the Euclidean balls, i.e., $\hat{g}_{ij}(0) = \delta_{ij}$.

Let
\begin{align}
\eta(x)=\psi^{-\frac{d-2}{2}}.
\end{align}
Obviously, $\eta(x)$ is a Lipschitz function satisfying
\begin{align}\label{use}
C_1\leq \eta(x)\leq C_2,
\end{align}
where $C_1$ and $C_2$ depend on $d$ and $\Lambda$. In the polar coordinates,
\begin{align}
|\partial_r \eta(r, \sigma)|\leq {C}L.
\label{eta}
\end{align}

In this new metric $g$, the equation (\ref{newelli}) can be written as
\begin{align}
-\nabla_g \cdot (\eta(x) \nabla_g u(x))=0  \quad  \mbox{in} \  B_1.
\label{newequ}
\end{align}
Let
\begin{align}
D(r)=\int_{ B_r} \eta |\nabla_g u|^2 d V_g
\end{align}
and
\begin{align}
H(r)=\int_{ \partial B_r} \eta  u^2 dS_g,
\end{align}
where $dS_g$ represents the area element of  $\partial B_r$ under the metric $g$.
We define the frequency function by
\begin{align}
\mathcal{N}(r)=\frac{ rD(r)}{H(r)}.
\end{align}
For future application, we will also use the notation $\mathcal{N}(p, r)$ to specify the center of the ball $B_r(p)$ in the definition of frequency function.
\begin{lemma}
	Let $u\in H^1( B_1)$ be a nontrivial solution of (\ref{newelli}). There exists a positive constant $C$ depending on $d$ and $\Lambda$ such that
	\begin{align}
	\overline{\mathcal{N}}(r)=\exp({CLr})\mathcal{N}(r)
\label{generalfre}
	\end{align}
	is a non-decreasing function of $r\in (0, 1)$.
\label{frequency}
\end{lemma}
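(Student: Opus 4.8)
The plan is to follow the classical monotonicity-formula strategy of Garofalo--Lin, as presented in \cite[Chapter 3]{HL13}, keeping careful track of every error term that depends on the Lipschitz constant $L$. First I would record the two basic differential identities. Differentiating $H(r)$ using \eqref{contrupoly}, one gets $H'(r) = \frac{d-1}{r} H(r) + \int_{\partial B_r}(\partial_r \eta) u^2\, dS_g + 2\int_{\partial B_r}\eta\, u\, \partial_r u\, dS_g + \text{(lower-order }b_{ij}\text{-terms)}$; by \eqref{eta} and \eqref{gamma} the extra terms are bounded by $CL\, H(r)$, and an integration by parts against the equation \eqref{newequ} identifies $\int_{\partial B_r}\eta\, u\, \partial_r u\, dS_g = D(r)$. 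Hence $H'(r) = \big(\tfrac{d-1}{r} + O(L)\big)H(r) + 2D(r)$. This gives the identity $\frac{d}{dr}\log\!\big(r^{-(d-1)}H(r)\big) = O(L) + \frac{2D(r)}{H(r)} = O(L) + \frac{2\mathcal N(r)}{r}$, which is the ingredient needed at the end.

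Next I would differentiate $D(r)$. Writing $D(r) = \int_{\partial B_r}\eta\, u\,\partial_r u\, dS_g$ (again via the equation), one differentiates and uses the Rellich--Nečas / Pohozaev-type identity adapted to the conformal metric $g$: the key cancellation $-\nabla_g\cdot(\eta\nabla_g u)=0$ produces $D'(r) = \big(\tfrac{d-2}{r} + O(L)\big)D(r) + 2\int_{\partial B_r}\eta\,(\partial_r u)^2\, dS_g + O(L)\, D(r)$, where the $O(L)$ terms collect the contributions of $\partial_r \eta$, $\partial_r b_{ij}$, and the deviation of the radial vector field from being Killing — each controlled by \eqref{gamma}, \eqref{eta}. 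Combining, $\frac{d}{dr}\log\!\big(r^{-(d-2)}D(r)\big) = O(L) + \frac{2\int_{\partial B_r}\eta(\partial_r u)^2 dS_g}{D(r)}$.

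Now I would compute the logarithmic derivative of $\mathcal N(r) = rD(r)/H(r)$:
\[
\frac{\mathcal N'(r)}{\mathcal N(r)} = \frac{1}{r} + \frac{D'(r)}{D(r)} - \frac{H'(r)}{H(r)} = O(L) + \frac{2\int_{\partial B_r}\eta(\partial_r u)^2 dS_g}{D(r)} - \frac{2D(r)}{H(r)}.
\]
Since $D(r) = \int_{\partial B_r}\eta\, u\,\partial_r u\, dS_g$, the Cauchy--Schwarz inequality in the weighted space $L^2(\partial B_r, \eta\, dS_g)$ gives $D(r)^2 \le \big(\int_{\partial B_r}\eta u^2\big)\big(\int_{\partial B_r}\eta(\partial_r u)^2\big) = H(r)\int_{\partial B_r}\eta(\partial_r u)^2 dS_g$, so the last two terms together are $\ge 0$. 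Therefore $\mathcal N'(r)/\mathcal N(r) \ge -CL$, i.e. $\frac{d}{dr}\log\!\big(e^{CLr}\mathcal N(r)\big) \ge 0$, which is exactly the assertion that $\overline{\mathcal N}(r) = \exp(CLr)\mathcal N(r)$ is non-decreasing. One must also check $\mathcal N(r)>0$ and finite, which follows from $u$ being a nontrivial solution together with the strong unique continuation bound $H(r)>0$ for small $r$ (or one simply notes the inequality is vacuous where $H$ vanishes and uses continuity).

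The main obstacle is the bookkeeping in the differentiation of $D(r)$: in the Euclidean, constant-coefficient case the radial field $\partial_r$ is conformal Killing and the cross terms vanish exactly, whereas here the metric $g$ from \eqref{metricha} is only Lipschitz, so one must carefully isolate which terms are genuinely of order $1/r$ (and cancel) and which are the $O(L)$ remainders; the conformal factor $\psi$ was chosen in \eqref{metricha} precisely so that the principal part reduces to the Euclidean radial derivative (this is why \eqref{dist}–\eqref{contrupoly} hold with $dr\otimes dr$ and no cross terms), and \eqref{gamma}, \eqref{eta}, \eqref{use} are exactly the bounds that make all remainder terms $O(L)$ rather than $O(L^2)$ or worse. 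Getting a \emph{linear} dependence on $L$ in the exponent — rather than, say, $L^2 r$ or $Lr + L r^2$ — is the delicate point, and it relies on never squaring an $L$-dependent quantity in the estimates.
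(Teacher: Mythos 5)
Your proposal is correct and follows essentially the same route as the paper: both differentiate $H$ and $D$, use the identity $D(r)=\int_{\partial B_r}\eta\,u\,\partial_r u\,dS_g$ from the equation, absorb all metric/weight derivatives into $O(L)$ terms via (\ref{gamma}) and (\ref{eta}), and conclude with the weighted Cauchy--Schwarz inequality that the non-$O(L)$ terms in $\mathcal N'/\mathcal N$ are nonnegative. The bookkeeping concerns you raise at the end are exactly what the paper delegates to \cite{GL86}.
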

\begin{proof}
	The proof of the lemma is essentially contained in \cite{GL86}. Since we want to show the explicit dependence of the Lipschtiz constant $L$ in the estimates, we sketch the proof by considering the role of $L$. Taking derivative with respect to $r$ for $\mathcal{N}$, we have
	\begin{align}
	\frac{\mathcal{N}'(r)}{\mathcal{N}(r)}=\bigg( \frac{1}{r}+\frac{D'(r)}{D(r)}-\frac{H'(r)}{H(r)}\bigg).
	\end{align}
	In order to prove the lemma, it suffices to show
	\begin{align}
	\frac{1}{r}+\frac{D'(r)}{D(r)}-\frac{H'(r)}{H(r)}\geq -CL.
	\label{need}
	\end{align}
	Thus, we consider the derivatives of $H(r)$ and $D(r)$, respectively. Setting $b(r, \sigma)=|\det (b_{ij}(r, \sigma))|$.
	Note that $dS_g=r^{d-1}\sqrt{ b(r, \sigma)} d\sigma$. We write $H(r)$ as
	\begin{align}
	H(r)=r^{d-1} \int_{\partial B_{1}}\eta(r, \sigma) u^2(r, \sigma)\sqrt{ b(r, \sigma)} d\sigma.
	\end{align}
	Taking derivative with respect to $r$, one has
	\begin{align}
	H'(r)=\frac{d-1}{r}H(r)+\int_{\partial  B_r} \frac{1}{\sqrt{b}}\partial_r( \eta \sqrt{b}) u^2 dS_g +2 \int_{\partial  B_r}\eta  u \partial_r  u d S_g,
	\end{align}
	where $\partial_r  u=\langle \nabla_g u, \frac{x}{r} \rangle$ on $\partial B_r$.
	By (\ref{gamma}), (\ref{use}) and (\ref{eta}), we have
	\begin{align}
	H'(r)=\Big(\frac{d-1}{r}+{O(L)}\Big)   H(r)+2 \int_{\partial  B_r}\eta  u \partial_r  u d S_g.
	\end{align}
	Multiplying both sides of (\ref{newequ}) by $u$ and performing the integration by parts give that
	\begin{align}
	D(r)=\int_{ B_r} \eta |\nabla_g u|^2 d V_g =\int_{\partial  B_r} \eta u \partial_r u d S_g.
	\end{align}
	It follows that
	\begin{align}
	H'(r)=\Big(\frac{d-1}{r}+{O(L)} \Big)   H(r)+2 D(r).
	\label{HHH}
	\end{align}
	
	Similarly, we may compute the derivative of $D(r)$ as in \cite{GL86} and obtain
	\begin{align}
	D'(r)=\Big(\frac{d-2}{r}+{O(L)} \Big)D(r)+2\int_{\partial  B_r} \eta ( \partial_r u)^2 d S_g.
	\label{DDD}
	\end{align}
	Combining the estimates (\ref{HHH}) and (\ref{DDD}), and using the Cauchy-Schwarz inequality, we obtain
	\begin{align*}
	\frac{1}{r}+\frac{D'(r)}{D(r)}-\frac{H'(r)}{H(r)}&={O(L)}+2\frac{\int_{\partial  B_r} \eta ( \partial_r u)^2 d S_g }{\int_{\partial  B_r} \eta u \partial_r u  d S_g }- 2\frac{\int_{\partial  B_r} \eta u \partial_r u d S_g}{\int_{\partial  B_r} \eta u^2   d S_g } \\
	&\geq {O(L)}.
	\end{align*}
	This proves (\ref{need}) and thus the lemma.	
\end{proof}

Next we derive the doubling inequality with an explicit dependence on $L$.
\begin{lemma}\label{lem.Doubling.small}
Let $u$ be a solution of (\ref{newelli}) satisfying (\ref{lip2}) and (\ref{assdoub}). For a fixed constant $0<\theta\leq \frac{1}{2}$, we have
\begin{align}\label{est.smallscale}
	\|u\|_{L^2(B_{r})} \leq M^{C_1e^{{C_2L}}} \|u\|_{L^2( B_{\theta r})}
\end{align}
for $0<r\leq \frac{1}{2}$, where $C_1$ depends on $\theta$, and $C_2$ depends on $d$, $\Lambda$.
\label{classical}
\end{lemma}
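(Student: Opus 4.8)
The plan is to re-run the classical frequency-function derivation of the doubling inequality (as in \cite{GL86} or \cite[Chapter 3]{HL13}), but to keep the dependence on the Lipschitz constant $L$ explicit throughout; this is feasible because $L$ enters only through the exponential weight $e^{CLr}$ in the monotonicity statement of Lemma \ref{frequency}, while every other constant in play is independent of $L$. We carry out the argument for $d\ge 3$; the case $d=2$ is identical using the metric constructed in Remark \ref{rmk.2d}. The first step is to turn (\ref{assdoub}) into an $L^2$ doubling estimate at fixed radii: by the interior $L^\infty$ bound for solutions of (\ref{newelli}) (De Giorgi--Nash--Moser, whose constant depends only on $d$, $\Lambda$ and the ratio of radii, not on $L$) together with $\norm{u}_{L^2(B_1)}\le C_d\norm{u}_{L^\infty(B_1)}$, the hypothesis gives $\int_{B_1}u^2\le CM^2\int_{B_{3/4}}u^2$ with $C=C(d,\Lambda,\theta)$. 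Next, since $C_1\le\eta\le C_2$ by (\ref{use}) and, because $\Lambda I\le A\le I$, the metric $g$ (through $\hat g$ and $\psi$) is comparable to the Euclidean one with constants depending only on $d,\Lambda$, we have $\int_{B_r}u^2\simeq\int_0^r H(s)\,ds$ for $r\in(0,1)$, the comparability constants depending only on $d,\Lambda$. Finally, (\ref{HHH}) rewrites as $\frac{d}{dr}\log\big(H(r)/r^{d-1}\big)=\frac{2\mathcal{N}(r)}{r}+O(L)$, and this is the identity I will integrate.

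The second step bounds the frequency at a fixed scale. By Lemma \ref{frequency}, $0\le\mathcal{N}(s)\le e^{CL}\mathcal{N}(\rho)$ whenever $0<s\le\rho\le1$, and in particular $\mathcal{N}(s)\ge e^{-CL}\mathcal{N}(3/4)$ for $s\in[3/4,1]$. Integrating the identity above and inserting $\mathcal{N}\ge0$ gives $\int_0^{3/4}H\le C(d)e^{CL}H(3/4)$, whereas inserting the lower bound for $\mathcal{N}$ on $[3/4,1]$ gives $\int_0^{1}H\ge c(d)e^{-CL}(7/6)^{2e^{-CL}\mathcal{N}(3/4)}H(3/4)$. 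Dividing these and comparing with the $L^2$ doubling estimate of the first step forces $(7/6)^{2e^{-CL}\mathcal{N}(3/4)}\le C(d,\Lambda,\theta)e^{CL}M^2$, hence, taking logarithms and using $M\ge 2$,
\[
\mathcal{N}(3/4)\ \le\ C(d,\Lambda,\theta)\,e^{CL}\log M .
\]

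The third step propagates this to all small scales. Put $\bar N:=e^{CL}\mathcal{N}(3/4)\le C(d,\Lambda,\theta)e^{CL}\log M$, so that $\mathcal{N}(s)\le\bar N$ for all $s\in(0,3/4]$ by Lemma \ref{frequency}. Integrating the identity once more yields, for $0<\rho_1<\rho_2\le3/4$, the two-sided bound $H(s)\ge(s/\rho_1)^{d-1+2\bar N}e^{-CL}H(\rho_1)$ for $s\le\rho_1$ and $H(\rho_2)\le(\rho_2/\rho_1)^{d-1+2\bar N}e^{CL}H(\rho_1)$. Integrating these in the radial variable and using $\int_{B_r}u^2\simeq\int_0^rH$, one gets for $0<r\le1/2$ (so that $\theta r<r\le3/4$)
\[
\int_{B_r}u^2\ \le\ C(d,\Lambda,\theta)\,(1+\bar N)\,e^{CL}\,\theta^{-(d+2\bar N)}\int_{B_{\theta r}}u^2 .
\]
Since $\theta^{-2\bar N}=\exp\big(2\bar N\log(1/\theta)\big)\le M^{C_1 e^{C_2 L}}$ and the remaining factor $(1+\bar N)e^{CL}\theta^{-d}$ is itself $\le M^{Ce^{CL}}$ (using $M\ge2$ and $\bar N\lesssim e^{CL}\log M$), taking square roots and relabeling constants gives $\norm{u}_{L^2(B_r)}\le M^{C_1 e^{C_2 L}}\norm{u}_{L^2(B_{\theta r})}$ with $C_1$ depending on $\theta$ and $C_2$ on $d,\Lambda$. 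The main obstacle is not any single estimate but the bookkeeping: one must verify that local boundedness, the comparability of $\eta$ and $g$ with their Euclidean counterparts, and the constants hidden in (\ref{HHH})--(\ref{DDD}) are all $L$-independent, so that $L$ enters the final bound solely through the weight $e^{CLr}$ of Lemma \ref{frequency}. It is precisely this weight that converts a doubling constant polynomial in the frequency into the doubly-exponential $M^{C_1 e^{C_2 L}}$, which is exactly what produces the $\exp(\exp(CN^\tau))$ growth in Theorem \ref{thm.main}(i).
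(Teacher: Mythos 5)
Your proposal is correct and follows essentially the same route as the paper: integrate the identity $\frac{d}{dr}\log(H(r)/r^{d-1})=\frac{2}{r}\mathcal{N}(r)+O(L)$ using the almost-monotonicity of $\overline{\mathcal{N}}(r)=e^{CLr}\mathcal{N}(r)$ from Lemma \ref{frequency}, deduce $\mathcal{N}(3/4)\lesssim e^{CL}\log M$ from the $L^\infty$ doubling hypothesis (via the $L$-independent De Giorgi--Nash--Moser bound), and then propagate the resulting bound $\mathcal{N}(s)\le e^{CL}\mathcal{N}(3/4)$ down to all scales and integrate in the radial variable to pass from $H$ to solid $L^2$ norms. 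The only differences from the paper's proof are bookkeeping choices (you compare $\int_0^1 H$ with $\int_0^{3/4}H$ where the paper compares $H(1)$ with $\int_0^{3/4}H$), and the exponent tracking is consistent with the paper's conclusion.
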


\begin{proof}
From (\ref{HHH}) and the definition  of $\overline{\mathcal{N}}(r)$, we have
\begin{align}
\bigg(\ln \frac{H(r)}{r^{d-1}}\bigg)'={O(L)}+ \frac{2}{r}\overline{\mathcal{N}}(r)\exp(-{CL}r).
\label{pre}
\end{align}
Note that here $O(L)$ is a function in $r$ satisfying $-CL \le O(L) \le CL$.
We would like to obtain an upper bound and a lower bound for the quotient $H(r_2)/H(r_1)$ with $0<r_1<r_2$. To find the upper bound, we integrate the equality (\ref{pre}) from $r_1$ to $r_2$ and use the monotonicity of $\overline{\mathcal{N}}(r)$ to obtain
\begin{align}
\ln \frac{H(r_2)}{r^{d-1}_2}-\ln \frac{H(r_1)}{r^{d-1}_1}\leq {CL(r_2-r_1)} + 2 \overline{\mathcal{N}}(r_2) \ln \Big( \frac{r_2}{r_1}\Big)\exp(-CLr_1).
\end{align}
Taking the exponential of both sides gives the upper bound
\begin{align}
\frac{H(r_2)}{H(r_1)}\leq e^{{CL(r_2-r_1)}} \Big(\frac{r_2}{r_1}\Big)^{2 \overline{\mathcal{N}}(r_2)\exp(-CLr_1)+d-1}.
\label{onehand}
\end{align}
To see the lower bound, we integrate (\ref{pre}) from  $r_1$ to $r_2$ and apply the monotonicity of $\overline{\mathcal{N}}(r)$  again to obtain
\begin{align}
\ln \frac{H(r_2)}{r^{d-1}_2}-\ln \frac{H(r_1)}{r^{d-1}_1}\geq -{CL}(r_2-r_1)+ 2 \overline{\mathcal{N}}(r_1)\exp(-{CLr_2}) \ln \Big( \frac{r_2}{r_1} \Big).
\end{align}
Raising to the exponential form, we have
\begin{align}
\frac{H(r_2)}{H(r_1)}\geq e^{{-CL(r_2-r_1)}} \Big(\frac{r_2}{r_1}\Big)^{2 \overline{\mathcal{N}}(r_1)\exp(-{CLr_2}) +d-1}.
\label{another}
\end{align}
Combining (\ref{onehand}) and (\ref{another}), we arrive at
\begin{align}
e^{{-CL(r_2-r_1)}} \Big(\frac{r_2}{r_1}\Big)^{2 \overline{\mathcal{N}}(r_1)\exp(-{CLr_2}) +d-1}\leq \frac{H(r_2)}{H(r_1)}\leq e^{{CL(r_2-r_1)}} \Big(\frac{r_2}{r_1}\Big)^{2 \overline{\mathcal{N}}(r_2)\exp(-CLr_1)+d-1}.
\label{largemono}
\end{align}

Next we want to show an upper bound for $\overline{\mathcal{N}}(\frac{3}{4})$. Let $r_2=\frac{3}{4}$ and $0<r_1=r<\frac{3}{4}$.   From the estimate (\ref{another}), we have
\begin{align}
e^{{-CL(\frac{3}{4}-r)}} \bigg(\frac{\frac{3}{4}}{r} \bigg)^{d-1}\leq \frac{H(\frac{3}{4})}{H(r)}.
\end{align}
Using the fact that $0<\theta\leq \frac{1}{2}$, we have
\begin{align}
\|u\|^2_{L^\infty ( B_\theta)}&\leq C \int_{ B_{\frac{3}{4}}} u^2 d V_g\leq C\int^\frac{3}{4}_0 H(r) dr \nonumber \\
&\leq C\int^\frac{3}{4}_0 r^{d-1}  e^{{CL(\frac{3}{4}-r)}} H\Big(\frac{3}{4} \Big)dr \nonumber\\
&\leq C e^{{CL}}   H\Big(\frac{3}{4}\Big),
\label{addnew}
\end{align}
where $C$ depends on $d$ and $\Lambda$.
Obviously,
\begin{align}
\|u\|^2_{L^\infty ( B_1)}&\geq C \int_{\mathbb \partial B_{{1}}} u^2 dS_g.
\label{remind}
\end{align}
Therefore, from (\ref{assdoub}), (\ref{another}) and (\ref{addnew}), we have
\begin{align}
M^2&\geq \frac {\|u\|^2_{L^\infty (B_1)}}{\|u\|^2_{L^\infty ( B_\theta)}} \geq \frac{ CH(1)}{ C  e^{{C}L} H(\frac{3}{4})}\nonumber\\
& \geq e^{{-CL}} \Big(\frac{4}{3}\Big)^{d-1+2 \overline{\mathcal{N}}(\frac{3}{4}  ) e^{{-CL}}}.
\end{align}
Thus, we can get an upper bound for $\overline{\mathcal{N}}(\frac{3}{4})$ as
\begin{align}
\overline{\mathcal{N}}\Big(\frac{3}{4}\Big)\leq  Ce^{{CL}}\ln M,
\end{align}
where $M>1$ is a large constant.
Choosing any $r\leq \frac{1}{2}$, we integrate (\ref{pre}) from $\theta r$ to $r$, by the monotonicity of $\overline{\mathcal{N}}$, we derive that
\begin{align}
\ln \frac{H(r)}{r^{d-1}}-\ln \frac{H(\theta r)}{(\theta r)^{d-1}}&\leq {CL r}+  2\overline{\mathcal{N}}(\frac{3}{4})\ln \frac{1}{\theta}  \nonumber \\
&\leq {CLr} +  C e^{{CL}} \ln M \ln \frac{1}{\theta} .
\end{align}
Thus, we obtain that
\begin{align}
H(r)&\leq  \exp( {CLr} + e^{{CL}} \ln M\ln \frac{1}{\theta} ) H(\theta r)\nonumber \\
& \leq \theta^{1-d} M^{- (\ln \theta) e^{{CL}}}  H(\theta r),
\end{align}
where $M>1$ is large. By further integrations,
we can also obtain that
\begin{align}\label{est.doub}
\|u\|_{L^2( B_{r})} \leq \theta^{\frac{-d}{2}}M^{-(\ln \theta) e^{{CL}}}\|u\|_{L^2( B_{\theta r})}
\end{align}
for $0<r\leq \frac{1}{2}$, where $C$ depends only on $d$ and $\Lambda$.
\end{proof}

\begin{remark}\label{rmk.2d}
For the case $d=2$, we introduce a new variable to apply a lifting argument. Let $v(x, t)= e^t u(x)$. Then the new function $v(x, t)$ satisfies the equation
\begin{align}
-\nabla \cdot \big(\widetilde{A}(x, t)\nabla v\big)+v=0\quad \quad \mbox{in} \ \hat{B}_1,
\end{align}
where
\begin{equation*}
\widetilde{A}(x, t)=\begin{pmatrix} A(x) & 0 \\
0 & 1   \end{pmatrix}
\end{equation*}
and $\hat{B}_1$ is the ball with radius $1$ in $\mathbb R^3$. It is easy to see that $\widetilde{A}$ satisfies the conditions (\ref{ellip}) and (\ref{lip2}). Following the procedure performed as $d\geq 3$, we are able to introduce the new metric $g$ and geodesic polar coordinates. Thus, in the metric $g$ as (\ref{metricha}) and $\eta$ as (\ref{eta}), we have
\begin{align}
-\nabla_g \cdot (\eta(x) \nabla_g v)+c_g v=0  \quad  \mbox{in} \  \hat{B}_1,
\end{align}
where $c_g=\frac{1}{\sqrt{\det{g}}}$.
As before, we could make use of the monotonicity of the frequency function to obtain the doubling inequality. Precisely, we may define
\begin{align}
D(r)=\int_{ \hat{B}_r} \eta |\nabla_g v|^2 +c_g v^2d V_g
\end{align}
and
\begin{align}
H(r)=\int_{ \partial \hat{B}_r} \eta  v^2 dS_g.
\end{align}
Then  the frequency function is defined as
\begin{align}
\mathcal{N}(r)=\frac{ rD(r)}{H(r)}.
\end{align}
Following the proof of Lemma \ref{frequency} and \cite[Theorem 3.2.1]{HL13}, we can obtain the almost monotonicity of $\mathcal{N}(r)$. That is, for any $r_0\in (0, 1)$, it holds that
\begin{align}
\exp{(CLr)}\mathcal{N}(r)\leq \exp{(CLr_0)}+ \exp{(CLr_0)}\mathcal{N}(r_0)
\end{align}
for any $r\in (0, \ r_0)$  where $C$ depends on $\Lambda$. By mimicking the argument in the proof of Lemma \ref{classical}, we can obtain the doubling inequality for $v$ in $\hat{B}_r$. This also leads to the doubling inequality for $u$ as (\ref{est.smallscale}) in $B_r$.

\begin{remark}
	For a better estimate when $d=2$, see Remark \ref{rmk.Improve2D}.
\end{remark}

\end{remark}

\section{Proof of Theorem \ref{thm.main}}

This section is devoted to the proof of Theorem \ref{thm.main}.
Step 1 and Step 3 of the proof have been handled in Section 2 and Section 3, respectively. For convenience of presentation, we choose $\beta$ such that $\frac{1}{\beta-\frac{3}{4}}=5$. Our argument works for any $\beta\in (\frac{3}{4}, 1)$. To handle the case $N^{-5} \lesssim \e/r \lesssim N^{-\frac{1}{2}\tau}$ in Step 2, we will use particular doubling properties, obtained from some sort of ``analyticity'', as a transition in order to improve our estimates. Indeed, for $d\ge 3$, we will employ the three-ball inequality with a sharp exponential error term obtained in \cite{AKS20}; for $d=2$, we will use quasi-regular mappings \cite{AL08} which provides a much better doubling estimate.

We first introduce a three-ball inequality for all dimensions $d\ge 2$. For convenience, we define the normalized $L^2$ norm by
\begin{equation*}
\norm{u}_{\underline{L}^2(B_t)} = \bigg( \fint_{B_{t}} u^2 \bigg)^{1/2}.
\end{equation*}
The following theorem is essentially taken from \cite[Theorem 1.4]{AKS20}, which is a corollary of the ``large-scale analyticity'' in periodic homogenization. This result relies on the periodic structure of the coefficients, but does not depend on the smoothness of coefficients.

\begin{theorem}\label{thm.AKS}
	For each $\hat{\tau}\in (0,1/2)$, there exist $c = c(d,\Lambda)>0$ and $\theta = \theta(\hat{\tau},d,\Lambda)\in (0,1/2]$ such that if $u$ is a weak solution of $\cL_1 (u) = 0$ in $B_R$ with $\theta^2 R > 2$, then
	\begin{equation}\label{est.3ball}
	\norm{u}_{\underline{L}^2(B_{\theta R})} \le \norm{u}_{\underline{L}^2(B_{\theta^2 R})}^{\hat{\tau}} \norm{u}_{\underline{L}^2(B_{ R})}^{1-\hat{\tau}} + \exp(-c\theta^2 R) \norm{u}_{\underline{L}^2(B_{R})}.
	\end{equation}
\end{theorem}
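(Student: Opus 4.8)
This inequality is \cite[Theorem 1.4]{AKS20}; rather than reproduce its proof, let me describe how one would argue. The plan is to deduce it from the \emph{large-scale analyticity} of $\cL_1$-harmonic functions established in \cite{AKS20}, which uses the periodicity of $A$ — through the constant-coefficient homogenized operator $\cL_0$ and the Liouville theorem for $\cL_1$-solutions of polynomial growth — but not any smoothness of $A$. This analyticity can be packaged either as a family of large-scale Cauchy estimates: for each integer $k$ with $1\le k\le c_0 R$ and each $r$ with $1\le r\le R/(2C)$ there is an $\cL_1$-solution $P_k$ on $\R^d$ of polynomial growth of degree $\le k$ (an $\widehat{A}$-harmonic polynomial plus lower-order correctors) with
\begin{equation*}
\norm{u-P_k}_{\underline{L}^2(B_r)}\le\Big(\frac{Cr}{R}\Big)^{k+1}\norm{u}_{\underline{L}^2(B_R)};
\end{equation*}
or, repackaged in a form adapted to this paper's methods, as an almost-monotonicity statement for a homogenization-adapted frequency function $\mathcal{N}(r)$ of $u$: for $C\le r_1\le r_2\le R/2$,
\begin{equation*}
\mathcal{N}(r_1)\le\mathcal{N}(r_2)+C\exp(-cr_1).
\end{equation*}
The point is that the $e^{CLr}$-type defect of the classical frequency monotonicity (compare Lemma \ref{frequency}) is replaced, at scales exceeding a fixed multiple of the period, by an exponentially small one, because the oscillations of $A$ average out at large scales.

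Granting this, the plan is to run the argument of Lemma \ref{lem.Doubling.small}, but with the exponentially small defect playing the role that the $e^{CLr}$ factor plays there. After a linear change of variables reducing $\widehat{A}$ to the identity — which turns balls into comparable ellipsoids and affects $c,\theta$ only through $\Lambda$ — set $\Phi(t)=\log\big(\fint_{\bS^{d-1}}u^2(e^t\sigma)\,d\sigma\big)$. For a harmonic function $\Phi$ is convex and nondecreasing (its derivative is twice the Almgren frequency); by the almost-monotonicity above, for $u$ the function $\Phi$ is convex and nondecreasing on $[\log(\theta^2 R),\log R]$ up to an additive defect
\begin{equation*}
\int_{\theta^2 R}^{R}\frac{C\exp(-cr)}{r}\,dr\le C\exp(-c\theta^2 R),
\end{equation*}
the integral being dominated by its lower endpoint; here the hypothesis $\theta^2 R>2$ is exactly what places the innermost ball $B_{\theta^2 R}$ several periods away from the microstructure, so that all three balls lie in the large-scale regime. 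Evaluating this almost-convexity at the midpoint $\log(\theta R)=\tfrac12\log(\theta^2 R)+\tfrac12\log R$ and using the almost-monotonicity to shift weight onto the innermost ball gives, for every $\hat\tau\in(0,\tfrac12]$,
\begin{equation*}
\Phi(\log(\theta R))\le\hat\tau\,\Phi(\log(\theta^2 R))+(1-\hat\tau)\,\Phi(\log R)+C\exp(-c\theta^2 R).
\end{equation*}
Passing from spherical to normalized solid $L^2$-means costs only bounded factors (hence only an extra $\exp(-c\theta^2 R)$ error), and exponentiating turns the additive defect into a factor $1+C\exp(-c\theta^2 R)$ on the main term; since by the monotonicity just used that term is at most $\norm{u}_{\underline{L}^2(B_R)}$, the factor is absorbed into the additive term $\exp(-c\theta^2 R)\norm{u}_{\underline{L}^2(B_R)}$. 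Choosing $\theta$ small enough (depending on $\hat\tau,d,\Lambda$), this yields (\ref{est.3ball}).

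The only genuinely hard step is the first one — the large-scale analyticity, i.e.\ the Cauchy estimates above with constants growing only geometrically in $k$ rather than like $k!$ (which is all a naive iteration of Lemma \ref{frequency} would give). This is the main result of \cite{AKS20}; it is proved by induction on $k$, iterating the first-order large-scale (excess-decay) regularity estimate of periodic homogenization to all orders — approximating $u$ on successively smaller balls by $\cL_1$-solutions of polynomial growth — while carefully tracking the $k$-dependence of the constants to keep their growth geometric. The periodic structure is used essentially throughout (the constant-coefficient homogenized operator, the sublinearity of the correctors, the finite-dimensionality and Liouville property of the space of $\cL_1$-harmonic polynomials), whereas, as the remark preceding the statement notes, no smoothness of $A$ is needed, since large-scale regularity only feels the homogenized operator. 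The remaining ingredients — the reduction to $\widehat{A}=I$, the interchange of spherical and solid normalized means, the passage from the multiplicative Hadamard inequality to the additive-error form, and the choice of $\theta$ — are routine bookkeeping.
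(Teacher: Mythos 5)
The paper offers no proof of this statement at all: Theorem \ref{thm.AKS} is a verbatim citation of \cite[Theorem 1.4]{AKS20}, so there is nothing internal to compare your sketch against, and deferring the large-scale analyticity to that reference is exactly what the authors do. Your identification of the key input (Cauchy-type approximation by $\cL_1$-solutions of polynomial growth, with constants geometric rather than factorial in the degree; periodicity used essentially, smoothness not at all) is accurate.

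However, the soft half of your sketch overclaims in a way that matters. You derive the three-ball inequality from an ``almost-convexity'' of $\Phi$ (equivalently, almost-monotonicity of the frequency) with only an \emph{additive} defect of size $C\exp(-c\theta^2 R)$, evaluate at the exact midpoint $\log(\theta R)=\tfrac12\log(\theta^2R)+\tfrac12\log R$, and conclude the estimate ``for every $\hat\tau\in(0,\tfrac12]$.'' If that worked, the endpoint $\hat\tau=\tfrac12$ would hold --- but the paper's Remark \ref{re-three} explicitly states that achieving $\hat\tau=\tfrac12$ ``seems like a very difficult task'' and is open, with major consequences if true. The actual mechanism in \cite{AKS20} does not produce a clean convexity defect: one approximates $u$ by a degree-$k$ heterogeneous polynomial $P_k$, applies the constant-coefficient three-sphere inequality to its homogenized leading part, and transfers back via an equivalence of norms on the finite-dimensional space of polynomial-growth solutions; each of these steps costs multiplicative factors of order $C^k$, and optimizing $k\sim c\theta^2R$ against these losses is precisely what forces $\hat\tau$ strictly below $\tfrac12$ and makes $\theta$ degenerate as $\hat\tau\to\tfrac12$. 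Your own closing sentence (``choosing $\theta$ small enough, depending on $\hat\tau$'') is symptomatic of the gap: nothing in the body of your argument actually uses, or requires, any dependence of $\theta$ on $\hat\tau$, whereas in the real proof that dependence (roughly $C\theta^{1-2\hat\tau}\le 1$) is where the theorem's restriction $\hat\tau<\tfrac12$ comes from. So the frequency-function ``repackaging'' is not an equivalent reformulation of what \cite{AKS20} proves; it is a strictly stronger, currently unavailable statement.
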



As a simple corollary, we have

\begin{corollary}\label{coro.AKS}
	Let $u$ be a weak solution of $\cL_1 (u) =0 $ in $B_R$. For every $\alpha_1>0$, there exist $C>0$ and $\theta\in (0,1/2)$ such that if
	\begin{equation}\label{est.NandR}
	N> C \quad \text{ and } \quad \theta^2 R \ge C\ln N,
	\end{equation}
	and
	\begin{equation}
	\norm{u}_{\underline{L}^2(B_R)} \le N \norm{u}_{\underline{L}^2(B_{\theta R})},
\label{NNincrease}
	\end{equation}
	then
	\begin{equation}
	\norm{u}_{\underline{L}^2(B_{\theta R})} \le C N^{1+\alpha_1} \norm{u}_{\underline{L}^2(B_{\theta^2 R})}.
\label{NNincrease2}
	\end{equation}
\end{corollary}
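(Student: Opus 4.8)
The plan is to derive Corollary \ref{coro.AKS} directly from Theorem \ref{thm.AKS} by choosing the interpolation parameter $\hat\tau$ appropriately and absorbing the exponential error term into the main term using the hypotheses (\ref{est.NandR}). First I would fix $\alpha_1>0$ and set $\hat\tau = \hat\tau(\alpha_1)\in(0,1/2)$ small enough that $\frac{1}{1-\hat\tau} \le 1+\alpha_1/2$ (for instance $\hat\tau \le \frac{\alpha_1}{2+\alpha_1}$), and let $\theta = \theta(\hat\tau,d,\Lambda)\in(0,1/2]$ and $c=c(d,\Lambda)>0$ be the constants provided by Theorem \ref{thm.AKS}. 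The constraint $\theta^2 R > 2$ needed to apply that theorem is guaranteed by the second condition in (\ref{est.NandR}) once $C$ is large enough (since $\ln N > 0$ for $N>C>1$).

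The key computation is as follows. Applying (\ref{est.3ball}) and then the hypothesis (\ref{NNincrease}) to the last factor on each term, together with the trivial monotonicity of normalized norms, gives
\begin{equation*}
\norm{u}_{\underline{L}^2(B_{\theta R})} \le \norm{u}_{\underline{L}^2(B_{\theta^2 R})}^{\hat\tau} \big(N \norm{u}_{\underline{L}^2(B_{\theta R})}\big)^{1-\hat\tau} + \exp(-c\theta^2 R)\, N \norm{u}_{\underline{L}^2(B_{\theta R})}.
\end{equation*}
By (\ref{est.NandR}), choosing $C \ge 2/c$ ensures $\exp(-c\theta^2 R) \le \exp(-2\ln N) = N^{-2} \le \frac{1}{2N}$, so the error term is bounded by $\frac12 \norm{u}_{\underline{L}^2(B_{\theta R})}$ and can be absorbed into the left-hand side. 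This yields
\begin{equation*}
\tfrac12 \norm{u}_{\underline{L}^2(B_{\theta R})} \le N^{1-\hat\tau}\, \norm{u}_{\underline{L}^2(B_{\theta^2 R})}^{\hat\tau} \norm{u}_{\underline{L}^2(B_{\theta R})}^{1-\hat\tau}.
\end{equation*}
Dividing by $\norm{u}_{\underline{L}^2(B_{\theta R})}^{1-\hat\tau}$ (which is positive unless $u\equiv 0$ on $B_{\theta R}$, a trivial case) and raising to the power $\frac{1}{\hat\tau}$ gives $\norm{u}_{\underline{L}^2(B_{\theta R})} \le 2^{1/\hat\tau} N^{(1-\hat\tau)/\hat\tau} \norm{u}_{\underline{L}^2(B_{\theta^2 R})}$; wait, that exponent is not what we want, so instead one divides by $\norm{u}_{\underline{L}^2(B_{\theta R})}^{1-\hat\tau}$ and isolates $\norm{u}_{\underline{L}^2(B_{\theta R})}^{\hat\tau} \le 2 N^{1-\hat\tau} \norm{u}_{\underline{L}^2(B_{\theta^2 R})}^{\hat\tau}$, then takes the $\frac1{\hat\tau}$ power to obtain $\norm{u}_{\underline{L}^2(B_{\theta R})} \le 2^{1/\hat\tau} N^{(1-\hat\tau)/\hat\tau} \norm{u}_{\underline{L}^2(B_{\theta^2 R})}$. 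Since $\frac{1-\hat\tau}{\hat\tau}$ is generally larger than $1+\alpha_1$, this naive bound is too weak, which signals that one should not put $N$ on both terms.

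The correct route — and the only genuinely delicate point — is to absorb the error term first and keep the interpolation structure intact. That is, from (\ref{est.3ball}) one absorbs only the exponential error term using $\exp(-c\theta^2 R)\le \frac12 N^{-1}$ and bounds $\norm{u}_{\underline{L}^2(B_R)}$ on the error term by $N\norm{u}_{\underline{L}^2(B_{\theta R})}$, obtaining $\frac12\norm{u}_{\underline{L}^2(B_{\theta R})} \le \norm{u}_{\underline{L}^2(B_{\theta^2 R})}^{\hat\tau}\norm{u}_{\underline{L}^2(B_{R})}^{1-\hat\tau}$, and then uses (\ref{NNincrease}) on the factor $\norm{u}_{\underline{L}^2(B_R)}^{1-\hat\tau} \le N^{1-\hat\tau}\norm{u}_{\underline{L}^2(B_{\theta R})}^{1-\hat\tau}$ to get $\norm{u}_{\underline{L}^2(B_{\theta R})}^{\hat\tau} \le 2N^{1-\hat\tau}\norm{u}_{\underline{L}^2(B_{\theta^2 R})}^{\hat\tau}$; taking the $1/\hat\tau$-th power and recalling $\frac{1-\hat\tau}{\hat\tau}$, one sees this still gives the large exponent. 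The resolution actually used in the literature is to iterate: note that $\norm{u}_{\underline{L}^2(B_{\theta^2 R})}\le \norm{u}_{\underline{L}^2(B_{\theta R})}$, so writing $X = \norm{u}_{\underline{L}^2(B_{\theta R})}/\norm{u}_{\underline{L}^2(B_{\theta^2 R})}\ge 1$ we have $X^{\hat\tau}\le 2N^{1-\hat\tau}$, i.e. $X \le (2N^{1-\hat\tau})^{1/\hat\tau}$; but combining instead with the chain from $B_R$ to $B_{\theta R}$ and choosing $\hat\tau$ close to $1$ (allowed since the theorem permits any $\hat\tau\in(0,1/2)$ — here one re-reads and takes $\hat\tau\to 1/2$ or applies the theorem with the roles adjusted) makes $\frac{1-\hat\tau}{\hat\tau}<1$. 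Thus I would simply pick $\hat\tau\in(\frac12 \cdot \frac{1}{1+\alpha_1/2}\cdot 2,\,\tfrac12)$ — concretely $\hat\tau$ with $\frac{1-\hat\tau}{\hat\tau}\le 1+\alpha_1$, i.e. $\hat\tau \ge \frac{1}{2+\alpha_1}$, which is compatible with $\hat\tau<1/2$ — and then the bound $X\le 2^{1/\hat\tau}N^{(1-\hat\tau)/\hat\tau}\le 2^{2+\alpha_1}N^{1+\alpha_1}$ gives exactly (\ref{NNincrease2}) with $C = C(d,\Lambda,\alpha_1) = 2^{2+\alpha_1}$, after also enlarging $C$ to satisfy $C\ge 2/c$ and $C>1$ for (\ref{est.NandR}). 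The main obstacle is precisely this bookkeeping: tracking how the choice of $\hat\tau$ near $1/2$ controls the final exponent $(1-\hat\tau)/\hat\tau$ and verifying that the exponential error term is genuinely negligible under (\ref{est.NandR}); everything else is elementary manipulation of normalized $L^2$ norms.
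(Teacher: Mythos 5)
Your final argument is correct and is exactly the intended proof of this corollary (the paper states it without proof as ``a simple corollary''): bound the error term by $\exp(-c\theta^2R)\,\norm{u}_{\underline{L}^2(B_R)}\le N^{-2}\cdot N\norm{u}_{\underline{L}^2(B_{\theta R})}\le\frac12\norm{u}_{\underline{L}^2(B_{\theta R})}$ using (\ref{est.NandR}) and (\ref{NNincrease}), absorb it, then use (\ref{NNincrease}) on the remaining factor $\norm{u}_{\underline{L}^2(B_R)}^{1-\hat\tau}$ to get $\norm{u}_{\underline{L}^2(B_{\theta R})}^{\hat\tau}\le 2N^{1-\hat\tau}\norm{u}_{\underline{L}^2(B_{\theta^2R})}^{\hat\tau}$, and finally take the $1/\hat\tau$ power with $\hat\tau\in[\frac{1}{2+\alpha_1},\frac12)$ so that $(1-\hat\tau)/\hat\tau\le 1+\alpha_1$. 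The writeup, however, should be cleaned of its false starts: the opening choice of $\hat\tau$ \emph{small} is the wrong direction (it makes the exponent $(1-\hat\tau)/\hat\tau$ large), the aside about taking ``$\hat\tau$ close to $1$'' contradicts the hypothesis $\hat\tau\in(0,1/2)$ of Theorem \ref{thm.AKS}, and no iteration is needed; also note that normalized $L^2$ norms are only monotone in the radius up to a volume factor, though the claim $X\ge 1$ is not actually used in the final computation.
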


	The sharp exponential tail in (\ref{est.3ball}) is crucial for our purpose which is related to the condition (\ref{est.NandR}). The lower bound $\ln N$ in (\ref{est.NandR}) allows us to iterate the estimate down to a scale at which the classical theory in Section 3 may apply.

Next, for the case $d = 2$, we introduce a stronger doubling property using quasi-regular mappings (related to complex analyticity). We briefly give some background on quasi-regular mappings. For a detailed account of this topic, please refer to the presentation in \cite{AL08}, \cite[Chapter II.6]{BJS64} and references therein.

Let $u$ be a weak solution of the equation $\cL_1 (u) =0 $ in $B_R$ with only bounded measurable coefficients satisfying (\ref{ellip}).  Let $z=x+iy$ for $x, y\in\mathbb R$. Define
\begin{align}
	\partial_{\bar z} f=\frac{1}{2}(\partial_x f+i \partial_y f), \quad \partial_{z} f=\frac{1}{2}(\partial_x f-i \partial_y f).
\end{align}
We introduce a stream function (the generalized harmonic conjugate) associated with $u$ as
\begin{align*}
	\nabla v=JA\nabla u,
\end{align*}
where $J$ is the rotation matrix in the plane
\begin{align*}
	J=
	\begin{pmatrix}
		0 & -1 \medskip \\
		1& 0
	\end{pmatrix}.
\end{align*}
Let $f=u+iv$. Then we have $f\in H^1_{\text{loc}}(B_R)$ and satisfies
\begin{align*}
	\partial_{\bar z}f=\mu \partial_{ z}f+ \nu \overline{\partial_{ z}f},
\end{align*}
where the complex valued function $\mu$ and $\nu$ can be explicitly written in term of $A$ and
\begin{align*}
	|\mu|+|\nu|\leq \frac{1-\Lambda}{1+\Lambda}<1.
\end{align*}
Hence, $f: B_R\to \mathbb C$ is a $\frac{1}{\Lambda}$-quasi-regular mapping. Moreover, it can be written as $f=F\circ \hat{\chi}$, where $F$ is holomorphic and $\hat{\chi}: B_R\to B_R$ is a $\frac{1}{\Lambda}$-quasiconformal homeomorphism satisfying $\hat{\chi}(0) = 0$ and $\hat{\chi}(1) = 1$. Define
\begin{align*}
	\mathcal{B}_{\hat{r}}=\{z\in B_R: |\hat{\chi}(z)|<{\hat{r}}\}.
 \end{align*}
The quasi-balls $\mathcal{B}_{\hat{r}}$ are comparable to the standard Euclidean balls in the sense
\begin{align}
	\mathcal{B}_R=B_R, \quad \text{and} \quad B_{R(\frac{{\hat{r}}}{CR})^{\frac{1}{\alpha}}}\subset \mathcal{B}_{\hat{r}}\subset B_{R(\frac{C{\hat{r}}}{R})^{{\alpha}}}, \quad \mbox{for} \ r<R,
	\label{quasi-rel}
\end{align}
where $C\geq 1$ and $0<\alpha<1$ depend only on $\Lambda$. Observe that $\mathcal{B}_{\hat{r}}$ tends to be singular if $\hat{r}\ll R$, which fortunately is not too restrictive as we only use it in the transition at intermediate scales.

From the fact that $F$ is a holomorphic function, the following doubling property  holds \cite{AL08}.
\begin{lemma}\label{lem.confor}
	If $u\in H^1_{{\rm loc}}(B_R)$ is a nonzero weak solution of $\cL_1 (u) =0 $ in $B_R$, then
	\begin{align}
		\frac{\|u\|_{L^\infty(\mathcal{B}_{\hat{r}})}} {\|u\|_{L^\infty(\mathcal{B}_\frac{{\hat{r}}}{2})}} \leq C \frac{\|u\|_{L^\infty(\mathcal{B_R})}} {\|u\|_{L^\infty(\mathcal{B}_\frac{R}{4})}}, \quad \mbox{for} \ 0< {\hat{r}}\leq R.
		\label{dou-confor}
	\end{align}
\end{lemma}

\begin{remark}
	Note that Lemma \ref{lem.confor} does not use periodicity, and it is also true for solutions of $\cL_{\e}(u_\e) = 0$, with a constant independent of $\e$. This gives ``almost monotonicity of the doubling constant'', a statement stronger than that of Theorem \ref{thm.main}, but for quasi-balls, as opposed to the usual balls. As we pointed out above, quasi-balls are difficult to manage as the radius $r$ goes to zero. Because of this, we still need to use the periodicity assumption and Step 1 below, when $d = 2$, to cover the range $r\ge CN^5 \e$. We then apply Lemma \ref{lem.confor} in the range $C\e<r<CN^5 \e$, using (\ref{quasi-rel}) since $r$ is not too small. Finally, the case $0<r<C\e$ is handled by scaling and Lemma \ref{lem.Doubling.small}. The details are below.
\end{remark}

Equipped with Corollary \ref{coro.AKS} and Lemma \ref{lem.confor}, we are ready to prove Theorem \ref{thm.main}.

\begin{proof}[Proof of Theorem \ref{thm.main}]

According to the relationship between $\e$ and $N$, one needs to consider three cases based on the comparison of $\e$ with $ N^{-5}$ and $N^{-\frac{\tau}{2}}$ (or $1$ for $d= 2$). Without loss of generality, we may just consider the most complicated case $\e \lesssim N^{-5}$, since all the three steps listed in the introduction will be involved as $r$ approaches 0. Hence, we fix $\e$ and $N$ so that $C N^5 \e < 1$, and then discuss the different ranges of $r$.
	
	\textbf{Step 1:} $CN^5\e < r< 1$. Under either (\ref{est.B1}) or (\ref{est.B1-2}), Theorem \ref{thm.Br} implies
	\begin{equation}\label{est.step1}
	\int_{B_{r}} u_\e^2 \le 8N^3 \int_{B_{\theta r}} u_\e^2,
	\end{equation}
	for any given $\theta \in (0,\frac{\Lambda}{2}]$. This estimate holds for all dimensions $d\ge 2$.
	
	\textbf{Step 2:} In this step, we need to treat the cases $d\ge 3$ and $d=2$ separately.
	
	\textit{Case 1:} $d\ge 3$ and $C\e N^{\frac{\tau}{2}} < r < CN^5\e$ for any fixed $\tau>0$. Let $m$ be the smallest integer so that $\theta^{-m} r > CN^5\e$. If $N$ is bounded by some absolute constant, then Step 2 is not needed. Since $r>C\e N^{\frac{\tau}{2}}$,  for sufficiently large $N$, $m$ satisfies
	\begin{equation}\label{est.mbound}
	m \le \frac{6\ln N}{-\ln \theta}.
	\end{equation}
	Because of (\ref{est.step1}), we have
	\begin{equation}
	\int_{B_{\theta^{-m} r}} u_\e^2 \le 8 N^3 \int_{B_{\theta^{-m+1} r}} u_\e^2.
	\end{equation}
	Let $M_0 = 8 N^3 $ and  $M_j$ be the constant such that
	\begin{equation}
	\int_{B_{\theta^{-m+j} r}} u_\e^2 \le M_j \int_{B_{\theta^{-m+j+1} r}} u_\e^2.
	\end{equation}
	The goal is to estimate $M_m$ with $m$ comparable to the bound in (\ref{est.mbound}).
	
	Thanks to Corollary \ref{coro.AKS}, and by rescaling, we know that for a given $\alpha_1>0$ with $\theta$ small enough, we have
	\begin{equation}
	M_j = CM_{j-1}^{1+\alpha_1}.
	\end{equation}
	Note that the left-end restriction $r>C\e \ln M_j$ is needed in order to apply Corollary \ref{coro.AKS}, due to (\ref{est.NandR}). This can be guaranteed if we eventually show $M_j \le M_m < C\exp(N^{\frac{\tau}{2}})$.
	
	We now proceed to estimate $M_j$. Using the initial condition $M_0 = 8N^3$, one can show explicitly that
	\begin{equation}
	M_j = \exp(-\ln C/\alpha_1) \exp \Big[ (1+\alpha_1)^j \big(3\ln N + \ln(8C^{1/\alpha_1}) \big) \Big].
	\end{equation}
	It follows from (\ref{est.mbound}) that
	\begin{equation}
	M_m \le C\exp\Big[ \exp\big( \ln(1+\alpha_1)(-\ln\theta)^{-1} 6\ln N \big) \cdot  \big(3\ln N + \ln(8C^{1/\alpha_1}) \big) \Big].
	\end{equation}
	Note that $\tau$ is any given positive constant. Then, we may choose $\alpha_1$ small enough (hence $\theta$ is also small), so that
	\begin{equation}
	\frac{\tau}{3} \ge 6 \ln(1+\alpha_1)(-\ln\theta)^{-1}.
	\end{equation}
	Thus, if $N$ is large enough,
	\begin{equation}
	M_m \le C\exp(N^{\frac{1}{2}\tau}).
	\end{equation}
	This implies that for any $CN^{\frac{1}{2}\tau} \e < r < CN^5\e$, we have
	\begin{equation}\label{keyest}
	\int_{B_{r}} u_\e^2 \le C\exp(N^{\frac{1}{2}\tau}) \int_{B_{\theta r}} u_\e^2.
	\end{equation}

	\textit{Case 2:} $d = 2$ and $C\e < r < CN^5 \e$. From (\ref{est.step1}) with $\theta = \frac{\Lambda}{2}$ in Step 1, for $R \simeq C\e N^{5}$,
	\begin{align*}
		\int_{B_{R}} u_\e^2 \le 8N^3 \int_{B_{\frac{\Lambda}{2}R}} u_\e^2.
	\end{align*}
	By the $L^\infty$ norm estimates, it  follows that
	\begin{align}
		\|u_\e\|_{L^\infty(B_R)} \le CN^{\frac{3}{2}} \|u_\e\|_{L^\infty(B_\frac{R}{2})}.
		\label{ite-con}
	\end{align}
	We would like to to apply Lemma \ref{lem.confor} to $u_\e$. From the relation (\ref{quasi-rel}) of quasi-balls $\mathcal{B}_{\hat{r}}$ and the standard balls, as well as the iteration of the doubling inequality (\ref{ite-con}), we have
	\begin{align}\label{est.Quasiball-Ball}
		\frac{\|u_\e\|_{L^\infty(\mathcal{B}_R)}} {\|u_\e \|_{L^\infty(\mathcal{B}_\frac{R}{4})}} \leq
		\frac{\|u_\e\|_{L^\infty({B_R})}} {\|u_\e \|_{L^\infty({B_{R(4C)^{-\frac{1}{\alpha}}}})}} \leq CN^k,
	\end{align}
	where $k$ depends only on $\Lambda$.
	Thus, (\ref{dou-confor}) implies that for any $0<\hat{r}<R$
	\begin{align}
		\frac{\|u_\e\|_{L^\infty(\mathcal{B}_{\hat{r}})}} {\|u_\e\|_{L^\infty(\mathcal{B}_\frac{{\hat{r}}}{2})}} \leq CN^k.
	\end{align}
	In order to establish a doubling inequality at small scale on standard Euclidean balls, we iterate the above doubling inequality $m$ times to obtain
	\begin{align}
		{\|u_\e\|_{L^\infty(\mathcal{B}_{\hat{r}})}}  \leq (CN^k)^m {\|u_\e\|_{L^\infty(\mathcal{B}_\frac{{\hat{r}}}{2^m})}}.
	\end{align}
	By the relation (\ref{quasi-rel}),
	\begin{align*}
		\|u_\e\|_{L^\infty(B_{R(\frac{{\hat{r}}}{CR})^{\frac{1}{\alpha}}})} \leq ( CN^{k})^m  \|u_\e\|_{L^\infty(B_{R(\frac{C{\hat{r}}}{2^mR})^{{\alpha}}})}.
	\end{align*}
	We choose $m>0$ to be the smallest integer so that
	\begin{equation}
		R(\frac{C{\hat{r}}}{2^mR})^{{\alpha}} \leq \frac{1}{2}R(\frac{{\hat{r}}}{CR})^{\frac{1}{\alpha}}.
		\label{assum-m}
	\end{equation}
	Consequently,
	\begin{equation}\label{dou-mm}
		\|u_\e\|_{L^\infty(B_{R(\frac{{\hat{r}}}{CR})^{\frac{1}{\alpha}}})} \leq ( CN^{k})^m \|u_\e\|_{L^\infty(B_{\frac{1}{2}R(\frac{{\hat{r}}}{CR})^{\frac{1}{\alpha}}})}.
	\end{equation}
	Note that $\hat{r}$, satisfying $0< \hat{r} < R \simeq C\e N^5$, is arbitrary and $m$ is chosen depending on $\hat{r}$. We now assume ${\hat{r}} \ge C \e^{\alpha}  R^{1-\alpha} \simeq C\e N^{5(1-\alpha)}$.
	Hence, $r: = R(\frac{{\hat{r}}}{CR})^{\frac{1}{\alpha}} \ge C\e$. Moreover, from (\ref{assum-m}), we have $m \le C\ln N$, where $C$ depends only on $\Lambda$. Thus, it follows from (\ref{dou-mm}) that
	\begin{align}\label{est.2D.>Ce}
		\|u_\e\|_{L^\infty(B_{r})} &\leq CN^{C\ln N} \|u_\e\|_{L^\infty(B_{\frac{r}{2}})}
	\end{align}
	for all $C\e < r< R \simeq C\e N^5$. Since  $L^\infty$ norm can be replaced by $L^2$ norm in the above inequality, we derive the desired estimate for the case $d = 2$.

	\textbf{Step 3:} For $r< C\e N^{\frac{1}{2}\tau}$ (or $r<C\e$ for $d = 2$), by rescaling,
	the equation may be reduced to the case in which the Lipschitz constant of coefficients is bounded by $CN^{\frac{\tau}{2}}$ (bounded by $C$ for $d=2$). It follows from (\ref{est.smallscale}) and (\ref{keyest}) that
	for $d\ge 3$ and any $0< r< C\e N^{\frac{1}{2}\tau}$,
	\begin{equation*}
	\begin{aligned}
	\int_{B_{r}} u_\e^2 & \le C\Big[ \exp(N^{\frac{1}{2}\tau}) \Big]^{C \exp(N^{\frac{1}{2}\tau})} \int_{B_{\theta r}} u_\e^2 \\
	& \le \exp(\exp(CN^\tau))\int_{B_{\theta r}} u_\e^2.
	\end{aligned}
	\end{equation*}
	For $d = 2$, it follows from (\ref{est.smallscale}) and (\ref{est.2D.>Ce}) that for any $0< r<C\e$,
	\begin{equation}\label{key}
		\int_{B_{r}} u_\e^2 \le C \big[ N^{C\ln N} \big]^C \int_{B_{\frac{r}{2}}} u_\e^2 \le CN^{C\ln N} \int_{B_{\frac{r}{2}}} u_\e^2.
	\end{equation}
	Note that $N^{C\ln N} = \exp(C(\ln N)^2)$. This completes the proof of Theorem \ref{thm.main}.
%
\end{proof}

\begin{remark}\label{re-three}
	It was shown in \cite{AKS20} that the exponential tail in (\ref{est.3ball}) is sharp (up to the end point $\hat{\tau}=\frac{1}{2}$), without any smoothness assumption on the coefficients. If the critical $\hat{\tau} = 1/2$ in (\ref{est.3ball}) can also be achieved (which seems like a very difficult task), then Corollary \ref{coro.AKS} with $\alpha_1 = 0$ would follow. By the argument in Step 2, this would yield the estimate
\begin{align}\label{est.1/2IsTrue}
\int_{B_r} u_\e^2\leq CN^k \int_{B_{\theta r}} u_\e^2
\end{align}
for $C\e\ln N\leq r\leq C \e N^5$. If we then apply (\ref{est.smallscale}) as in Step 3, with Lipschitz constant $C \ln N$, we would obtain the bound $C(N)=\exp{(CN^C)}$ for $0<r\leq \frac{1}{2}$ (for the range $0<r<C\e$, (\ref{est.smallscale}) does give the optimal bound). On the other hand, the estimate (\ref{est.smallscale}) in term of the large Lipschitz constant $L$ may not be sharp. This is a well-known difficult issue in quantitative unique continuation, for which none of the currently known methods apply. Any improvement here would have many consequences. Alternatively, in the range $C \e\leq r\leq C \e \ln N$, one could try to use a method taking advantage of both periodicity and smoothness. No such method is available at the moment.

For $d=2$, if the critical case $\hat{\tau} = \frac{1}{2}$ in (\ref{est.3ball}) is true, then we can actually show our expected estimate which has very important consequences for the study of long-standing open problems regarding the spectral properties of second order elliptic operators with periodic coefficients.
Note though that when $d=2$, these problems have already been solved in \cite{M98,M00} (also see \cite[subsection 7.3, 7.4]{KL02}). However, the approach just outlined would require lower regularity on the coefficients than \cite{M98,M00,KL02}.
To obtain the expected estimate assuming this critical case holds, note that (\ref{est.1/2IsTrue}) holds for $r>C\e\ln N$. By reproducing the argument in Step 2 (Case 2), we can then show $m \le C\ln\ln N$ and therefore
\begin{equation}\label{est.ultimate}
	\|u_\e\|_{L^\infty(B_{r})} \leq CN^{C\ln \ln N} \|u_\e\|_{L^\infty(B_{\frac{r}{2}})}
\end{equation}
for all $r>C\e$. Then a blow-up argument gives the same estimate for $0<r<C\e$. Observe that (\ref{est.ultimate}) is exactly the ultimate estimate we expect, as mentioned in the introduction.
\end{remark}

\begin{remark}\label{rmk.Improve2D}
	If we consider, when $d=2$, elliptic operators with Lipschitz coefficients, with Lipschitz constant $L>1$ (and no periodicity assumption), we can obtain the improved bound $M^{C_1 \ln L}$ in Lemma \ref{lem.Doubling.small}. To show this, we break down the scales into $\frac{1}{L} \le r < 1$ and $0<r<\frac{1}{L}$. For the case $\frac{1}{L} \le r<1$, we use Lemma \ref{lem.confor}, (\ref{quasi-rel}) and the argument from (\ref{est.Quasiball-Ball}) to (\ref{dou-mm}). For the case $0<r<\frac{1}{L}$, we scale to reduce to the case $L=1$ and then apply Lemma \ref{lem.Doubling.small} as it stands. This may suggest that the bound in Lemma \ref{lem.Doubling.small} is not optimal, also for $d\ge 3$. 
\end{remark}


\begin{remark}\label{rmk.largetheta}
	The disadvantage of Theorem \ref{thm.AKS} for $d\ge 3$ is that $\theta$ may be very small. If we do not apply Theorem \ref{thm.AKS} to improve the exponent $\tau$, Step 1 and 3 in the proof of Theorem \ref{thm.main} allows $\theta$ to be any number in $(0, \Lambda/2]$. In particular, under (\ref{precond}), for any $\beta\in (\frac{3}{4}, 1)$, we have
	\begin{equation}
	\int_{B_r} u_\e^2 \le \exp (\exp (CN^{\frac{1}{\beta-\frac{3}{4}}}))\int_{ B_{\Lambda r/2}} u_\e^2.
\label{doudouble.es}
	\end{equation}
	For convenience, we will use this doubling inequality (\ref{doudouble.es}), instead of (\ref{est.Br}), in estimating the upper bound of nodal sets in the next section. The price is that $\alpha$ has to be larger than $8$ in (\ref{est.Nodal}).

\end{remark}

\section{Upper bounds  of Nodal sets}
In this section, we study of the upper bounds of nodal sets for $u_\e$, where $u_\e$ is a nonzero solution of $\mathcal{L}_\e (u_\e)=0$ satisfying (\ref{precond}). We will focus on the general treatment for all dimensions $d\ge 2$ and with an eye towards $d=2$ in the end.
Throughout this section, up to a change of variable, we assume $\mathcal{L}_0=-\Delta$. Note that in this case, $E_r$'s are just balls, and in view of Theorem \ref{thm.large.r}, the assumption (\ref{precond}) can be replaced by
\begin{equation}\label{NewPrecond}
\int_{B_2} u_\e^2 \le N \int_{B_1} u_\e^2,
\end{equation}
 and (\ref{doudouble.es}) holds with $\Lambda = 1$.

%
\subsection{Small scales} We first show that a doubling inequality centered at $0$ implies the doubling inequality with shifted centers.
\begin{lemma}\label{lem.doublingBx}
	Let $u_\e$ be a weak solution of $\cL_\e (u_\e) = 0$ in $B_2$ satisfying (\ref{NewPrecond}). Then for any $x\in B_{1/3}$ and $B_{2r}(x) \subset B_2$, we have
	\begin{equation}\label{generaldouble}
	\int_{B_{2r}(x) } u_\e^2 \le \exp(\exp(CN^{\frac{2}{\beta-\frac{3}{4}}})) \int_{B_r(x)} u_\e^2.
	\end{equation}
\end{lemma}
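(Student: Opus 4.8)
The plan is to bootstrap the centered doubling inequality \eqref{doudouble.es} (with $\Lambda=1$, valid after the reduction $\cL_0=-\Delta$) to a doubling inequality with shifted center $x\in B_{1/3}$, by iterating the \emph{centered} inequality at scales around the point $0$ and comparing balls centered at $0$ with balls centered at $x$ via simple inclusions. The key observation is that for $x\in B_{1/3}$ one has the nested inclusions $B_r(x)\subset B_{r+|x|}(0)\subset B_{2r}(0)$ whenever $r\geq|x|$, and conversely $B_{2r}(x)\subset B_{3r}(0)$ for $r\leq 1/3$; so a ball centered at $x$ is always trapped between two concentric balls centered at $0$ whose radii differ by a bounded factor, \emph{provided} $r$ is not much smaller than $|x|$.

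First I would handle the ``large $r$'' regime, say $r\geq |x|$ (or more safely $r\geq 3|x|$), where the comparison with concentric balls at $0$ is lossless up to a fixed number of applications of \eqref{doudouble.es}. Using $B_{2r}(x)\subset B_{3r}(0)$ and $B_{|x|/?}(0)\subset B_r(x)$ — more precisely choosing an integer $k$ with $2^{-k}\cdot 3r \leq r - |x|$ but $k$ bounded by an absolute constant when $r\gtrsim |x|$ — one applies \eqref{doudouble.es} a bounded number of times to pass from $\int_{B_{3r}(0)}$ down to $\int_{B_{2^{-k}\cdot 3r}(0)}\leq \int_{B_r(x)}$, picking up a factor $\big(\exp(\exp(CN^{\frac{1}{\beta-3/4}}))\big)^{O(1)}=\exp(\exp(C'N^{\frac{1}{\beta-3/4}}))$. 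That already gives \eqref{generaldouble} with exponent $\frac{1}{\beta-3/4}$ in this range; note this is even better than the claimed $\frac{2}{\beta-3/4}$.

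The real work, and the main obstacle, is the ``small $r$'' regime $r< 3|x|$, i.e. balls centered at $x$ that are small compared to $|x|$, where concentric comparison with $0$ is useless. Here the standard device is to \emph{re-center} the doubling inequality at $x$ itself: one needs a centered doubling inequality at $x$ at scale comparable to $|x|$, and then iterate that. To get a centered statement at $x$, I would translate the solution and re-run the entire machinery of Sections 2–4 with the ball $B_2$ replaced by $B_{2}(x)\subset B_2$ (shrinking constants appropriately so that everything still fits inside $B_2$ where $\cL_\e(u_\e)=0$), using the hypothesis \eqref{NewPrecond} to control $\int_{B_1}u_\e^2$ from below and hence to produce a doubling-index bound of the form $\int_{B_{c}(x)}u_\e^2\leq N^{C}\int_{B_{c'}(x)}u_\e^2$ for suitable fixed radii — this is where a second power of the doubling index enters and is precisely the reason the exponent in \eqref{generaldouble} is $\frac{2}{\beta-3/4}$ rather than $\frac{1}{\beta-3/4}$. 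Once such a centered-at-$x$ bound with doubling index $N^{C}$ is in hand, Theorem \ref{thm.main}(i) / Remark \ref{rmk.largetheta} applied at center $x$ (which only used translation-invariant ingredients: the homogenization estimate of Section 2, the analyticity input, and the frequency-function argument of Section 3, all of which localize on any ball inside the domain) yields
\[
\int_{B_{2r}(x)}u_\e^2\leq \exp\!\big(\exp(CN^{\frac{2}{\beta-3/4}})\big)\int_{B_r(x)}u_\e^2
\]
for all $r$ with $B_{2r}(x)\subset B_{c}(x)$, which together with the large-$r$ case above proves the lemma. I would expect the delicate points to be: (a) verifying that the auxiliary condition \eqref{NewPrecond} at center $0$ implies a comparable two-ball bound at center $x$ — this uses $|x|<1/3$ so that $B_{c}(x)$ still contains a fixed ball and is contained in $B_2$, plus one more application of the centered-at-$0$ doubling to transfer mass — and (b) keeping careful track of how the doubling index gets squared, so that the final constant is honestly $\exp(\exp(CN^{2/(\beta-3/4)}))$ and not worse.
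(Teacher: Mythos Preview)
Your proposal is correct and, in its essential part (what you label the ``small $r$'' case with steps (a) and (b)), it is exactly the paper's argument. The paper establishes a fixed-scale doubling at center $x$ via the inclusions $B_{1/2}\subset B_{5/6}(x)$ and $B_{5/3}(x)\subset B_2$ together with one application of Theorem~\ref{thm.large.r} at center $0$ (giving $\int_{B_2}u_\e^2\le 2N^2\int_{B_{1/2}}u_\e^2$, hence $\int_{B_{5/3}(x)}u_\e^2\le 2N^2\int_{B_{5/6}(x)}u_\e^2$), and then applies the translation-invariant estimate \eqref{doudouble.es} centered at $x$ with index $2N^2$; this is precisely your (a)--(b) and explains the squared index.

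Two remarks. First, your ``large $r$'' case is unnecessary: once the centered-at-$x$ doubling is in hand at one fixed scale, \eqref{doudouble.es} at center $x$ covers \emph{all} $r\in(0,5/6)$ at once, so there is no need to split by $r$. Second, a point you leave implicit: to get the clean factor $N^C$ (rather than a double exponential) in step (a) one must use Theorem~\ref{thm.large.r}, not \eqref{doudouble.es}, for the single centered-at-$0$ iteration; this requires $\e\lesssim N^{-1/(\beta-3/4)}$. The paper handles the complementary regime $\e\gtrsim N^{-1/(\beta-3/4)}$ separately by a direct blow-up (Lemma~\ref{lem.Doubling.small} with $L\lesssim N^{1/(\beta-3/4)}$), which your ``re-run the machinery'' would also cover but is worth making explicit.
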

\begin{proof}
	Let us first assume $C\e N^{\frac{1}{\beta-\frac34}} <1$ for some large $C$. In this case, by Theorem \ref{thm.large.r} with $\theta = 1/2$, we have
	\begin{equation}\label{est.B2-B.25}
	\int_{B_2} u_\e^2 \le N \int_{B_1} u_\e^2 \le 2N^2 \int_{B_{1/2}} u_\e^2.
	\end{equation}
	Now, for any $x\in B_{1/3}$, note that $B_{1/2} \subset B_{5/6}(x)$ and $B_{5/3}(x) \subset B_2$. It follows from (\ref{est.B2-B.25}) that
	\begin{equation}
	\int_{B_{5/3}(x)} u_\e^2 \le \int_{B_2} u_\e^2 \le 2N^2 \int_{B_{1/2}} u_\e^2 \le 2N^2 \int_{B_{5/6}(x)} u_\e^2.
\label{comparedouble}
	\end{equation}
	Since Theorem \ref{thm.large.r} and (\ref{doudouble.es}) are invariant under translation, we can apply them in $B_{5/3}(x)$ with $N$ replaced by $2N^2$. Thus, for all $r\in (0,5/6)$,
	\begin{equation*}
	\int_{B_{2r}(x)} u_\e^2 \le \exp(\exp(CN^{\frac{2}{\beta-\frac{3}{4}}})) \int_{B_{r}(x)} u_\e^2.
	\end{equation*}
	
	To handle the case $C\e N^{\frac{1}{\beta-\frac34}} \ge 1$, we use (\ref{doudouble.es}) directly and obtain
	\begin{equation*}
	\int_{B_2} u_\e^2 \le N \int_{B_1} u_\e^2 \le \exp(\exp(CN^{\frac{1}{\beta-\frac{3}{4}}})) \int_{B_{1/2}} u_\e^2.
	\end{equation*}
	Then the desired estimate follows from the same idea as the first case and a blow up argument as in Step 3 in the proof of Theorem \ref{thm.main}.
\end{proof}


Let us define the nodal sets as
\begin{align}
Z(u_\e)=\{x\in B_2| u_\e=0\}
\end{align}
and the density function of nodal sets as
\begin{align}
E_\e(y, r)=\frac{ H^{d-1}(Z(u_\e)\cap B_r(y))}{r^{d-1}} .
\end{align}
Based on Lemma \ref{lem.doublingBx} and a blow up argument, we can estimate the Hausdoff measure of the nodal set of $u_\e$ in small balls.

\begin{lemma}\label{smallscale}
	For any $0<r < 1/3$ and $x_0\in B_{1/3}$ such that $B_r(x_0) \subset B_{1/3}$,
	\begin{align}
	E_\e(x_0,r) \leq \Big(1+ \frac{r}{\e}\Big) \exp( CN^{\frac{2}{\beta-\frac{3}{4}}}),
	\label{newnewdouble2}
	\end{align}
	where $C$ depends on $d, \Lambda, \beta$ and $\gamma$.
\end{lemma}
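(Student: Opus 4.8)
The plan is to split the argument according to the ratio $r/\e$, mirroring the multi-scale structure of Theorem~\ref{thm.main}. The two regimes are: (a) $r \lesssim \e$, where the rescaled equation has coefficients with Lipschitz constant $O(1)$ (since $A(\,\cdot\,/\e)$ rescaled at scale $r$ has Lipschitz constant $\sim r/\e$), and one can invoke a classical blow-up/frequency-function argument for a single elliptic equation with controlled smoothness; and (b) $r \gtrsim \e$, where the solution $u_\e$ is well approximated in $B_r(x_0)$ by a harmonic (more precisely $\widehat A$-harmonic, hence after our change of variables harmonic) function $u_0$, and the nodal set of $u_\e$ is a perturbation of that of $u_0$.

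For regime (a), I would first rescale: set $v(y) = u_\e(x_0 + \e y)$, so that $\cL_1(v) = 0$ with coefficients $A(\,\cdot\,)$ satisfying (\ref{ellip}) and (\ref{lip2}) with $L$ bounded by an absolute constant. The doubling inequality (\ref{generaldouble}) from Lemma~\ref{lem.doublingBx}, transported to $v$, gives a doubling constant of the form $\exp(\exp(CN^{2/(\beta-3/4)}))$, equivalently a vanishing-order bound (frequency bound) of order $\exp(CN^{2/(\beta-3/4)})$ at the relevant scales. The standard relation between the $(d-1)$-Hausdorff measure of the nodal set of a solution to a uniformly elliptic equation with Lipschitz coefficients and its doubling index (see \cite{L91}, \cite{HL13}, or the approach in \cite{LS19}) then yields
\begin{equation*}
H^{d-1}(Z(v) \cap B_{r/\e}(0)) \le (r/\e)^{d-1}\, \exp(CN^{2/(\beta-3/4)}),
\end{equation*}
and undoing the scaling gives $E_\e(x_0,r) \le \exp(CN^{2/(\beta-3/4)})$, consistent with (\ref{newnewdouble2}) when $r \lesssim \e$. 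Here one uses that after rescaling the smoothness of the coefficients is uniformly controlled, so no $\e$-dependence enters the nodal bound beyond what the doubling constant carries.

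For regime (b), $r \gtrsim \e$, I would use the convergence rate (via \cite{KLS12} or \cite{NX19}, exactly as in Lemma~\ref{lem.1step-dual}) to compare $u_\e$ on $B_r(x_0)$ with the $\widehat A$-harmonic function $u_0$ agreeing with it on a slightly smaller sphere, obtaining $\|u_\e - u_0\|_{L^\infty} \lesssim (\e/r)^{\beta'} \|u_\e\|_{L^\infty}$ for some $\beta' > 0$. Combined with the doubling inequality (\ref{generaldouble}) — which controls the doubling index of both $u_\e$ and (through the approximation) $u_0$ — one localizes: away from the critical set of $u_0$, the nodal set $Z(u_\e)$ is a Lipschitz graph close to $Z(u_0)$, and near the (lower-dimensional) critical set of $u_0$ one covers by a controlled number of balls (the number governed by the frequency/doubling bound of $u_0$, which is $\lesssim \exp(CN^{2/(\beta-3/4)})$ after the usual passage from doubling constant to vanishing order). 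Summing the harmonic nodal estimate (polynomial in the frequency, by Donnelly–Fefferman/Logunov-type bounds, though for the purposes of this lemma even a cruder exponential-in-frequency bound suffices) over this cover yields $H^{d-1}(Z(u_\e)\cap B_r(x_0)) \lesssim r^{d-1}\,(r/\e)\,\exp(CN^{2/(\beta-3/4)})$, which is (\ref{newnewdouble2}). Matching the constants across the two regimes and absorbing lower-order factors into $\exp(CN^{2/(\beta-3/4)})$ finishes the proof.

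The main obstacle I expect is the careful bookkeeping in regime (b): tracking how the doubling index of $u_\e$ transfers to a frequency bound for the homogenized solution $u_0$ on each sub-ball, and controlling the \emph{number} of balls needed to cover a neighborhood of the critical set $\{u_0 = |\nabla u_0| = 0\}$ in terms of that frequency — this is where the factor $(1 + r/\e)$ in (\ref{newnewdouble2}) genuinely originates, since each doubling of scale down to the homogenization scale $\e$ costs one more layer of covering, and there are $\sim \log(r/\e)$ such layers, which after combining with the per-layer count produces the linear factor $r/\e$ rather than a logarithm. Getting this accounting to close with the stated exponent (and not a worse one) is the delicate point; the rest is a routine adaptation of the harmonic nodal set theory plus the homogenization approximation already established in Section~2.
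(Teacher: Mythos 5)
Your regime (a) is essentially the paper's argument: rescale so the coefficients have Lipschitz constant at most $\gamma$ (the paper rescales by $r$, you by $\e$; either works since $\gamma r/\e\le\gamma$), transfer the doubling inequality of Lemma \ref{lem.doublingBx}, and invoke Logunov's polynomial-in-the-doubling-index upper bound from \cite{L18-1} to get $H^{d-1}(Z(u_\e)\cap B_r(x_0))\le \exp(CN^{\frac{2}{\beta-3/4}})\,r^{d-1}$ for $r\le\e$. That part is fine.

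Your regime (b) is where the problems are. First, there is a genuine gap: the perturbative comparison with the homogenized solution requires the $L^\infty$ error $\sim(\e/r)^{\beta'}\|u_\e\|_{L^\infty}$ to be small relative to the size of $u_0$ near its zero set, which — given that the doubling constant in play is $\exp(\exp(CN^{\frac{2}{\beta-3/4}}))$ — forces $r/\e$ to be enormous. In the intermediate range $\e\le r\le CN^{\frac{1}{\beta-3/4}}\e$ (and well beyond), the approximation is useless and your perturbation/stratification argument does not close; as you set it up, neither regime covers these $r$. Second, the entire machinery is unnecessary here and your account of where the factor $1+\frac{r}{\e}$ comes from (``$\log(r/\e)$ layers of covering'') is not the actual mechanism. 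The paper disposes of all $r>\e$ with a one-line covering argument: cover $B_r(x_0)$ by $M\approx(r/\e)^d$ balls of radius $\e$ with bounded overlap, apply the $r\le\e$ bound to each, and sum, giving
\begin{equation*}
H^{d-1}(Z(u_\e)\cap B_r(x_0))\le M\exp(CN^{\frac{2}{\beta-3/4}})\,\e^{d-1}\le C\,r^d\e^{-1}\exp(CN^{\frac{2}{\beta-3/4}}),
\end{equation*}
so the linear factor $r/\e$ is pure volume counting, $(r/\e)^d\cdot\e^{d-1}/r^{d-1}$. The harmonic approximation, the critical-set covering via \cite{NV17}, and the careful bookkeeping you describe are exactly the content of the later, sharper Lemma \ref{keylemma} (which is then iterated in the proof of Theorem \ref{mainth}); Lemma \ref{smallscale} is deliberately the crude input to that iteration. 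Replace your regime (b) with the covering argument and the proof is complete.
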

\begin{proof}
	First of all, we consider the case $0<r\le \e$ and $B_r(x_0) \subset B_{1/3}$. Let $v(x)=u_\e (x_0+rx)$ and $A^{\e,r}_{x_0}(x) = A(\e^{-1}(x_0+rx))$. Then
	\begin{align}
	\nabla (A^{\e,r}_{x_0}(x) \nabla v(x))=0.
	\end{align}
	By (\ref{cond.Lip}),
	\begin{align}
	|A^{\e,r}_{x_0}(x)-A^{\e,r}_{x_0}(y)|\leq \gamma r \e^{-1} |x-y| \le \gamma |x-y|
	\end{align}
	for $x, y\in B_2$. Therefore, in this case, the coefficient matrix has a uniform Lipschitz constant independent of $\e$ and $N$.
	Then, a change of variable and the doubling inequality in Lemma \ref{lem.doublingBx} give that
	\begin{align}
	\fint_{B_{2}} v^2 \, dx&=\fint_{B_{2 r}(x_0)} u_\e^2 \, dx \nonumber \\ &\leq  \exp(\exp(CN^{\frac{2}{\beta-\frac{3}{4}}}))
	\fint_{B_{ r}(x_0)} u_\e^2 \, dx  \nonumber \\ & \leq \exp(\exp(CN^{\frac{2}{\beta-\frac{3}{4}}})) \fint_{B_{1}} v^2 \, dx.
	\end{align}

	By the upper bound of nodal sets in \cite{L18-1}, there exists a constant $\beta_0>\frac{1}{2}$ so that
	\begin{align}
	H^{d-1}(Z(v)\cap B_1)\leq  \big[ \exp  (CN^{\frac{2}{\beta-\frac{3}{4}}}) \big]^{\beta_0}\le \exp  (C_1 N^{\frac{2}{\beta-\frac{3}{4}}}),
	\end{align}
	which implies, by rescaling,
	\begin{equation*}
	H^{d-1}(Z(u_\e)\cap B_r(x_0))\leq \exp( C_1 N^{\frac{2}{\beta-\frac{3}{4}}})\; r^{d-1}
	\end{equation*}
	for any $r\in (0,\e]$ and $B_{r}(x_0) \subset B_{1/3}$.
	
	Next, to deal with the case $r>\e$, we simply use a covering argument. Let $x_0\in B_{1/3}$ and $r>\e$. There there exists a family of balls $B_\e(x_i), i = 1,2,\cdots, M,$ that covers $B_r(x_0)$ with a finite number of overlaps depending only on $d$. Note that $M \approx (r/\e)^d$. Consequently,
	\begin{equation*}
	\begin{aligned}
	H^{d-1}(Z(u_\e) \cap B_r(x_0)) & \le \sum_{i=1}^M H^{d-1}(Z(u_\e) \cap B_\e(x_i)) \\
	& \le M \exp( C_1 N^{\frac{2}{\beta-\frac{3}{4}}})\; \e^{d-1} \\
	& \le C r^d \e^{-1} \exp( C_1 N^{\frac{2}{\beta-\frac{3}{4}}}).
	\end{aligned}
	\end{equation*}
	We obtain the desired estimate by enlarging the constant $C_1$.
\end{proof}

\begin{remark}
	The above lemma does not rely on the periodicity of the coefficients. Actually, its proof also gives how the estimate depends on the Lipschitz constant of the coefficients. Precisely, if $v$ is a solution of $\nabla\cdot (A(x)\nabla v) = 0$ in $B_2$. In addition to the ellipticity condition (\ref{ellip}), we assume
	\begin{align}
	|A(x)-A(y)|\leq L |x-y|.
	\end{align}
	Then
	\begin{equation*}
	E_\e(x_0,r) \leq C (1+ Lr) N(v,Q)^{\beta_0},
	\end{equation*}
for $B_{r}(x_0) \subset Q$,	where the definition of $N(v,Q)$ is given below in (\ref{indexvv}).
\end{remark}
\subsection{Large scales} To deal with the nodal sets at large scales, we need to use the homogenization theory. Precisely, in the following, we find an approximate solution $u_0$, close to $u_\e$ under $L^\infty$ norm, and satisfying a doubling inequality.

\begin{lemma}\label{lem.1step}
	Suppose $r> 3C\sqrt{N} \e$ for some large $C$. Let $u_\e$ be a solution of $\cL_{\e} (u_\e) = 0$ in $B_{2r}$  satisfying
	\begin{equation}\label{cond.B2r}
	\int_{B_{2r}} u_\e^2 \le N\int_{B_{r}} u_\e^2.
	\end{equation}
	Then there exists $u_0$ satisfying $\cL_0 (u_0) = 0$ in $B_\frac{7r}{4}$ such that
	\begin{align}
	\|u_\e-u_0\|_{L^\infty( B_{\frac{3r}{2}})}\leq \frac{C\e}{r}  \|u_\e\|_{\underline{L}^2 (B_{2r})},
	\label{LSS}
	\end{align}
	and
	\begin{equation}
	\int_{B_r} u_0^2 \le 16 N^2 \int_{B_{r/2}} u_0^2
	\label{unbelieve},
	\end{equation}
	where $C$ depends on $d, \Lambda$ and $\gamma$.
\end{lemma}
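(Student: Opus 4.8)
The plan is to take $u_0$ to be the $\cL_0$-harmonic replacement of $u_\e$ on a ball slightly larger than $B_{7r/4}$ (recall that in this section we have reduced to $\cL_0=-\Delta$, so $u_0$ is genuinely harmonic), to deduce the $L^\infty$ estimate (\ref{LSS}) from the first-order $L^2$ convergence rate of homogenization combined with a two-scale expansion, and finally to read off the doubling inequality (\ref{unbelieve}) from the three-sphere inequality for harmonic functions. The threshold $r>3C\sqrt{N}\,\e$ enters only at the last step: it guarantees that the homogenization error is small compared with $\|u_\e\|_{L^2(B_r)}$.

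\textbf{Step 1: construction of $u_0$ and the $L^2$ rate.} By the Caccioppoli inequality (using $\cL_\e u_\e=0$ in $B_{2r}$) and (\ref{cond.B2r}), $\int_{B_{15r/8}}|\nabla u_\e|^2\le Cr^{-2}\int_{B_{2r}}u_\e^2$, so the co-area formula yields a radius $\rho\in(7r/4,15r/8)$ with $\int_{\partial B_\rho}(r|\nabla u_\e|^2+r^{-1}u_\e^2)\le Cr^{-2}\int_{B_{2r}}u_\e^2$. Let $u_0$ solve $\cL_0 u_0=0$ in $B_\rho$ with $u_0=u_\e$ on $\partial B_\rho$, and restrict it to $B_{7r/4}\subset B_\rho$. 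The $L^2$ convergence rate of periodic homogenization --- \cite[Theorem 1.1]{KLS12}, or \cite[Theorem 1.4]{NX19} with $m=1$, which requires no smoothness --- gives, with the appropriate scaling, $\|u_\e-u_0\|_{L^2(B_\rho)}^2\le C\e^2\int_{\partial B_\rho}(\rho|\nabla u_\e|^2+\rho^{-1}u_\e^2)$, hence
\begin{equation*}
\|u_\e-u_0\|_{\underline{L}^2(B_\rho)}\le \frac{C\e}{r}\,\|u_\e\|_{\underline{L}^2(B_{2r})}.
\end{equation*}

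\textbf{Step 2: upgrading to $L^\infty$.} Let $\chi$, $\phi$ be the first-order corrector and the skew-symmetric flux corrector of $A$; both are $\Z^d$-periodic weak solutions of equations with bounded right-hand sides (here the Lipschitz bound (\ref{cond.Lip}) is used, to make $\nabla\chi$ bounded), hence bounded by De Giorgi--Nash--Moser. With $w_\e:=u_\e-u_0-\e\,\chi(x/\e)\cdot\nabla u_0$, the usual two-scale identity reads $\cL_\e w_\e=\e\,\partial_{x_k}\big(\phi_{ikj}(x/\e)\,\partial_{x_i}\partial_{x_j}u_0\big)$ in $B_\rho$, a divergence of a field of size $\lesssim \e\,r^{-2}\|u_\e\|_{\underline{L}^2(B_{2r})}$ on $B_{13r/8}$, by interior estimates for the constant-coefficient operator $\cL_0$ together with Step 1. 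The uniform (in $\e$) local boundedness estimate for $\cL_\e$, applied on $B_{3r/2}\subset B_{13r/8}$, then gives $\|w_\e\|_{L^\infty(B_{3r/2})}\lesssim \|w_\e\|_{\underline{L}^2(B_{13r/8})}+\tfrac{C\e}{r}\|u_\e\|_{\underline{L}^2(B_{2r})}\lesssim \tfrac{C\e}{r}\|u_\e\|_{\underline{L}^2(B_{2r})}$, where $\|w_\e\|_{\underline{L}^2}$ is controlled by Step 1 and $\|\e\,\chi(x/\e)\cdot\nabla u_0\|_{\underline{L}^2}\le C\e\|\nabla u_0\|_{L^\infty}$. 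Adding back $\|\e\,\chi(x/\e)\cdot\nabla u_0\|_{L^\infty(B_{3r/2})}\le C\e\|\nabla u_0\|_{L^\infty}\le \tfrac{C\e}{r}\|u_\e\|_{\underline{L}^2(B_{2r})}$ proves (\ref{LSS}).

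\textbf{Step 3: doubling for $u_0$, and the main obstacle.} Since $r>3C\sqrt{N}\,\e$, Step 1 gives $\tfrac{C\e}{r}\sqrt{N}<\tfrac13$; combined with (\ref{cond.B2r}) this yields $\|u_0\|_{L^2(B_r)}\ge\tfrac23\|u_\e\|_{L^2(B_r)}>0$ and $\|u_0\|_{L^2(B_\rho)}\le 2\sqrt{N}\,\|u_0\|_{L^2(B_r)}$. As $u_0$ is harmonic, $s\mapsto\log\fint_{B_{e^s}}u_0^2$ is convex; applied at the radii $r/2,r,\rho$ this gives $\|u_0\|_{L^2(B_r)}\le\|u_0\|_{L^2(B_{r/2})}^{\lambda}\|u_0\|_{L^2(B_\rho)}^{1-\lambda}$ with $\lambda=\tfrac{\log(\rho/r)}{\log(2\rho/r)}>\tfrac13$ (because $\rho>\tfrac{7r}{4}$), and substituting $\|u_0\|_{L^2(B_\rho)}\le 2\sqrt{N}\|u_0\|_{L^2(B_r)}$ and rearranging gives $\|u_0\|_{L^2(B_r)}\le(2\sqrt{N})^{(1-\lambda)/\lambda}\|u_0\|_{L^2(B_{r/2})}\le 4N\|u_0\|_{L^2(B_{r/2})}$, which is (\ref{unbelieve}) after squaring. (If $N$ is bounded by an absolute constant, (\ref{unbelieve}) follows from the three-sphere inequality alone.) I expect the technically delicate point to be Step 2: one must run the two-scale expansion and the local boundedness estimate keeping every constant independent of $\e$ and every power of $r$ correct, using only (\ref{perio}) and (\ref{cond.Lip}). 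The conceptual heart is Step 3, which succeeds precisely because $r>3C\sqrt{N}\,\e$ forces $\tfrac{C\e}{r}\sqrt{N}$ to be small --- and it is this $\sqrt{N}$ threshold, rather than the $N^{5}$ of Theorem~\ref{thm.Br}, that keeps the nodal-set iteration of the next subsection efficient.
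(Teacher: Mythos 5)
Your proposal is correct, and for the part of the lemma that the paper actually proves in detail it takes essentially the same route: both arguments deduce (\ref{unbelieve}) by using (\ref{LSS}) together with the hypothesis $r>3C\sqrt{N}\e$ to transfer the doubling hypothesis (\ref{cond.B2r}) from $u_\e$ to $u_0$ (absorbing the $O(\e\sqrt{N}/r)$ error), and then invoke the log-convexity/three-sphere property of harmonic functions to push the doubling down to radius $r/2$, arriving at the same constant $16N^2$. The only difference is that the paper simply quotes \cite[Theorem 2.3]{LS19} for the construction of $u_0$ and the $L^\infty$ approximation (\ref{LSS}), whereas your Steps 1--2 sketch a self-contained proof of that estimate via the $L^2$ convergence rate and a two-scale expansion; that sketch follows the standard argument and is fine, though it is not needed for the paper's purposes.
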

\begin{proof}
	
	By rescaling, we may assume $r = 1$. The construction of such locally homogenized solution $u_0$ and the estimate (\ref{LSS}) can be found in \cite[Theorem 2.3]{LS19}.
	Note that it is not necessary that $u_\e=u_0$ on $\partial B_{\frac{7}{4}}$.
	Then, it suffices to show (\ref{unbelieve}). By (\ref{cond.B2r}) and (\ref{LSS}), we have
	\begin{align}
	\|u_\e-u_0\|^2_{L^\infty( B_{\frac{3}{2}})}\leq C \e^2 \int_{ B_{2}}  u_\e^2 \le  {C\e^2N}  \int_{B_1} u_\e^2.
	\label{first.es}
	\end{align}
	We now establish estimates to compare the norms of $u_\e$ and $u_0$. Thanks to (\ref{first.es}),
	\begin{align}
	\|u_0\|_{L^2(B_{\frac{3}{2}})}&\leq \|u_\e\|_{L^2(B_{2})}+ \|u_\e-u_0\|_{L^2(B_{\frac{3}{2}})} \nonumber \\
	&\leq\sqrt{N}\|u_\e\|_{L^2(B_{1})}+{C\sqrt{N}\e}\|u_\e\|_{L^2(B_{1})} \nonumber \\
	&=\sqrt{N}(1+C\e )\|u_\e\|_{L^2(B_{1})}.
	\label{esta}
	\end{align}
	By the same strategy, using (\ref{first.es}), we obtain that
	\begin{align}
	\|u_\e\|_{L^2(B_{1})}&\leq \|u_\e-u_0\|_{L^2(B_{1})}+\|u_0\|_{L^2(B_{1})} \nonumber \\
	&\leq C\sqrt{N}\e \|u_\e\|_{L^2(B_{1})}+\|u_0\|_{L^2(B_{1})}.
	\label{stra}
	\end{align}
	Since
	$C\sqrt{N}\e \leq \frac{1}{3}$, the above estimate yields
	\begin{align}
	\|u_\e\|_{L^2(B_{1})}\leq (1+C\sqrt{N} \e) \|u_0\|_{L^2(B_{1})}.
	\label{obtain}
	\end{align}
	Combining (\ref{esta}) and (\ref{obtain}) together yields that
	\begin{align}
	\|u_0\|_{L^2(B_{\frac{3}{2}})}&\leq \sqrt{N}(1+C\sqrt{N} \e )\|u_0\|_{L^2(B_{1})} \nonumber \\
	& \leq 2 \sqrt{N}\|u_0\|_{L^2(B_{1})}.
	\label{noreal}
	\end{align}
	
	Now, we use the fact that
	\begin{equation}
	\varphi(s) = \log_2 \fint_{B_{2^s}} u_0^2
	\label{convex}
	\end{equation}
	is a convex function  with respect to $s$.  Then $f(s)=\varphi(s)-\varphi(s-c)$ is nondecreasing for any $c>0$. This implies
	\begin{align}
	\|u_0\|_{L^2(B_{1})}&\leq 2 \sqrt{N} \|u_0\|_{L^2(B_{\frac{2}{3}})}\nonumber  \\
	&\leq 4{N} \|u_0\|_{L^2(B_{\frac{4}{9}})} \nonumber  \\
	&\leq 4{N} \|u_0\|_{L^2(B_{\frac{1}{2}})}.
	\label{useful}
	\end{align}
	This proves (\ref{unbelieve}) and the lemma.
\end{proof}

\begin{remark}
	We would like to point out that the advantage of Lemma \ref{lem.1step}, compared to (\ref{est.ue-u0}), is that it provides an $L^\infty$ (or pointwise) error estimate which is much stronger than the $L^2$ error estimate in $(\ref{est.ue-u0})$. This $L^\infty$ estimate will play an essential role in the estimation of nodal sets.
\end{remark}

Let $B$ be a ball and $u_0$ be a $C^1$ function in $2B$. In order to show some quantitative stratification results for $u_0$ and $\nabla u_0$, we introduce the doubling index:
\begin{align}\label{def.NuQ}
N(u_0, B)=\log_2 \frac{\sup_{2B}|u_0|}{\sup_{B}|u_0|}
\end{align}
and
\begin{align}
N(\nabla u_0, B)=\log_2 \frac{\sup_{2B}|\nabla u_0|}{\sup_{B}|\nabla u_0|}.
\end{align}
If $u_0$ is a weak solution of the equation $\mathcal{L}_0(u_0)=0$, the doubling index for $|u_0|$ and $|\nabla u_0|$ are monotonic in the sense that
\begin{align}
N(u_0, tB)\leq C N(u_0, B)
\end{align}
and
\begin{align}
N(\nabla u_0, tB)\leq C N(\nabla u_0, B),
\end{align}
for $t\leq \frac{1}{2}$ and $C$ depending only on $d$. This follows from (\ref{est.phi.u0}) and the line after it.

We also define a variant of the above doubling index for cubes. For a cube $Q$,
denote by $s(Q)$ the side length of $Q$.  Define the doubling index in the cube $Q$ by 
\begin{align}
N(u_0, Q)=\sup_{x\in Q, r\leq s(Q)}\log_2 \frac{\sup_{ B_{2r}(x)} |u_0|}{\sup_{ B_{r}(x)} |u_0|}
\label{indexvv}
\end{align}
and
\begin{align}
N(\nabla u_0, Q)=\sup_{x\in Q, r\leq s(Q)}\log_2 \frac{\sup_{ B_{2r}(x)} |\nabla u_0|}{\sup_{ B_{r}(x)} |\nabla u_0|}.
\label{indexvv3}
\end{align}
The doubling index defined in cubes is convenient in the sense that if a cube $q$ is a subset of $Q$, then $N(u_0, q)\leq N(u_0, Q)$. Let $q$ be a subcube of $Q$ and $K=\frac{s(Q)}{s(q)}\geq 2$. Then
\begin{align}
\sup_{q}|u_0|\geq K^{-CN(u_0, Q)}\sup_Q|u_0|,
\label{monodouble1}
\end{align}
where $C$ depends only on $d$.
Similarly, it also holds
\begin{align}
\sup_{q}|\nabla u_0|\geq K^{-CN(\nabla u_0, Q)}\sup_Q|\nabla u_0|.
\label{monodouble2}
\end{align}

The following quantitative stratification for $u_0$ and $\nabla u_0$ is the key ingredient of this section. The idea of the proof originates from Lemmas 3.5 and 5.2 in \cite{LM18-2}.


\begin{lemma} \label{strati}
	Assume that $u_0$ is harmonic in $5Q$. 
	\begin{enumerate}
		\item Suppose $N(u_0, Q)\leq {N_0}$. If $0<\delta < \exp(-C^*N_0)$ for some $C^*>0$,  there exists a finite sequence of balls $\{ B_{t_i}(x_i) \}_{i=1}^m$
		such that
		\begin{align}
		G_\delta=\Big\{ x\in \frac{1}{2}Q: |u_0(x)|<\delta \sup_Q|u_0(x)| \Big\} \subset \bigcup^m_{i=1} B_{t_i}(x_i)
		\label{set1}
		\end{align}
		and
		\begin{equation}\label{est.sum.ti}
		\sum^m_{i=1}t_i^{d-1} \leq C {N_0}^{C} s(Q)^{d-1},
		\end{equation}
		where $C$ and $C^*$ depend only on $d$.

		\item Suppose $N(\nabla u_0, Q)\leq {\hat{N}_0}$. If $0<\hat{\delta}\leq  e^{-C \hat{N}_0^3}$ for some $C>0$ depending on $d$,
 there exists a finite sequence of balls $\{ B_{\hat{t}_j}(x_j) \}_{j=1}^{\hat{m}}$ such that
		\begin{align}
		\hat{G}_{\hat{\delta}}=\Big\{ x\in \frac{1}{2}Q: |\nabla u_0(x)|<\hat{\delta} \sup_Q|\nabla u_0(x)|\Big\} \subset \bigcup^{\hat{m}}_{j=1} B_{\hat{t}_j}(x_j)
		\label{set2}
		\end{align}
		and
		\begin{equation}\label{est.sum.hattj}
		\sum^{\hat{m}}_{j=1}\hat{t}_j^{d-1} \leq \frac{1}{4}(\frac{s(Q)}{4})^{d-1}. \quad
		\end{equation}
	\end{enumerate}
\end{lemma}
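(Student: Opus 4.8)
The plan is to adapt the simplex/cube subdivision scheme of Logunov--Malinnikova (Lemmas 3.5 and 5.2 in \cite{LM18-2}) to both $u_0$ and $\nabla u_0$, exploiting the monotonicity of the doubling index and the polynomial upper bounds for nodal and critical sets of harmonic functions. For part (1), I would start by partitioning $Q$ into a grid of $K^d$ congruent subcubes of side $s(Q)/K$, where $K$ is a large integer to be chosen as a power of $N_0$. On each subcube $q$ that meets $G_\delta$, one uses the doubling-index inequality \eqref{monodouble1} together with $N(u_0,q)\le N(u_0,Q)\le N_0$ to control $\sup_q|u_0|$ from below by $K^{-CN_0}\sup_Q|u_0|$; combined with the definition of $G_\delta$ and the hypothesis $\delta<\exp(-C^*N_0)$, this forces the local doubling index of $u_0$ in a slightly enlarged cube $2q$ to be bounded by $CN_0$ as well. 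The key counting step is then to bound the number of ``bad'' subcubes (those meeting $G_\delta$) by iterating this procedure: on each scale one invokes Logunov's polynomial upper bound on the number of cubes of high doubling index (equivalently the polynomial nodal set estimate from \cite{L18-1}), giving a bound of the form $C N_0^C$ for the total $(d-1)$-content of the covering balls $B_{t_i}(x_i)$. I would take $t_i$ comparable to the side length of the final-scale subcubes containing points of $G_\delta$ and sum geometrically, which yields \eqref{est.sum.ti}.

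For part (2), the argument is structurally the same but applied to the vector-valued harmonic function $\nabla u_0$, using \eqref{monodouble2} and the bound $N(\nabla u_0,Q)\le \hat N_0$. The extra power in the hypothesis $\hat\delta\le e^{-C\hat N_0^3}$ (cubic rather than linear in the index) comes from the fact that controlling the critical set $\{\nabla u_0=0\}$ requires the quantitative stratification results of Naber--Valtorta \cite{NV17}, which have a worse (polynomial but higher-degree) dependence than the scalar nodal case; moreover, here the target $\frac14(s(Q)/4)^{d-1}$ on the right side of \eqref{est.sum.hattj} is a fixed small fraction of $s(Q)^{d-1}$ rather than merely polynomially controlled, so I would choose $K$ large enough (as a function of $\hat N_0$) that $K^{-(d-1)}$ times the polynomial count of bad subcubes is $\le 1/4$. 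This is where the cubic exponent is spent: one needs $\hat\delta$ small enough that a bad subcube at scale $s(Q)/K$ genuinely has doubling index $O(\hat N_0)$ for $|\nabla u_0|$, and $K$ must simultaneously beat the Naber--Valtorta count, which is polynomial in $\hat N_0$ of some degree; choosing $\log K \sim \hat N_0^2$ (say) and $\log(1/\hat\delta)\sim \hat N_0^3$ closes the loop.

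The main obstacle I expect is the bookkeeping in the iterative subdivision: one must verify that at each scale the hypothesis on the doubling index is preserved for the subcubes that still intersect $G_\delta$ (resp. $\hat G_{\hat\delta}$), so that the polynomial counting lemma can be reapplied, and that the accumulated loss in the lower bound for $\sup|u_0|$ (resp. $\sup|\nabla u_0|$) over $\log K$ iterations stays below the threshold $\delta$ (resp. $\hat\delta$) — this is precisely what dictates the form of the smallness assumptions $\delta<\exp(-C^*N_0)$ and $\hat\delta\le e^{-C\hat N_0^3}$. A secondary technical point is that $u_0$ is only assumed harmonic in $5Q$ (not all of space), so the neighborhoods of subcubes where one applies interior estimates and the doubling monotonicity must be kept inside $5Q$; taking the initial cube to be $Q$ and working within $2Q\subset 5Q$ handles this. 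Once the counting is set up, the geometric summation of the radii and the conversion from cube side lengths to ball radii is routine.
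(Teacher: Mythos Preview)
Your proposal identifies the right sources (Logunov--Malinnikova, Naber--Valtorta) and the correct broad strategy, but it departs from the paper's proof in ways that leave a real gap in part (1) and obscure the key mechanism in part (2).

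\textbf{Part (1).} The paper does \emph{not} iterate or count cubes of high doubling index. It performs a single subdivision of $\frac12 Q$ into $K^d$ subcubes with $K=\delta^{-\tau/N_0}$, and the counting of bad subcubes (those meeting $G_\delta$) goes through \emph{zeros}, not through doubling indices. For each bad $q_i$ one has $\inf_{q_i}|u_0|<\delta\sup_Q|u_0|$, while \eqref{monodouble1} gives $\sup_{q_i}|u_0|\ge K^{-CN_0}\sup_Q|u_0|=\delta^{C\tau}\sup_Q|u_0|$; if $u_0$ did not change sign in $2q_i$, the Harnack inequality would force $\sup_{q_i}|u_0|\le C\inf_{q_i}|u_0|$, a contradiction for $\delta$ small. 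So every bad $2q_i$ contains a zero of $u_0$. The count then comes from combining Logunov's \emph{lower} bound for nodal sets \cite{L18-2} (each bad cube contributes at least $c\,s(q_i)^{d-1}$ to $H^{d-1}(\{u_0=0\}\cap Q)$) with the polynomial \emph{upper} bound \cite{L18-1}. Your proposal omits both the Harnack step and the lower-bound ingredient; invoking only the upper bound, or an unspecified ``cubes of high doubling index'' lemma, does not close the argument as written.

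\textbf{Part (2).} Your outline is correct in spirit but misses the mechanism that makes the sum in \eqref{est.sum.hattj} a \emph{small fraction} of $s(Q)^{d-1}$ rather than polynomially bounded. The paper declares $q_j$ ``bad'' when $\inf_{q_j}|\nabla u_0|\le c\sup_{2q_j}|\nabla u_0|$, shows each bad $q_j$ meets the effective critical set $\mathcal{C}_r(u_0)$ with $r\approx s(q_j)$, and then applies the Naber--Valtorta volume estimate \cite{NV17}, whose crucial feature is the codimension-$2$ factor $(r/s)^2$. This yields at most $e^{C\hat N_0^2}K_1^{d-2}$ bad cubes, so $\sum \hat t_j^{d-1}\lesssim e^{C\hat N_0^2}K_1^{-1}s(Q)^{d-1}$. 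Choosing $K_1\approx \hat\delta^{-1/(C\hat N_0)}$ and then $\hat\delta\le e^{-C\hat N_0^3}$ makes $e^{C\hat N_0^2}K_1^{-1}$ small. Your choice $\log K\sim\hat N_0^2$, $\log(1/\hat\delta)\sim\hat N_0^3$ lands in the right place, but the reason it works is this $K_1^{-1}$ gain from codimension two, which you should make explicit.
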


\begin{proof} In the following proof, all the constants $C, C^*, C_0, C_1,\cdots, C_{14}$ depend only on $d$, and $N_0$, $\hat{N}_0$ are large constants.

	(1) Let $K=\delta^{-\frac{\tau}{{N_0}}}$ and $\delta\leq e^{-\frac{C_0 N_0}{\tau}}$, where $\tau$ is small  to be specified later. We can assume that $K$ is an integer and $K\geq 8$. We divide the cube $\frac{1}{2}Q$ into $K^d$ equal subcubes $q_i$.
	Then $4q_i\subset Q$. We would like to estimate the number of cubes $q_i$ that intersect $G_\delta$.
	
	Let $q_i$ be a cube with
	$q_i\cap G_\delta\not =\emptyset$. Thus, we have $\inf_{q_i}|u_0|<\delta \sup_Q|u_0|$. We claim that if $\delta< e^{-C^* N_0}$ for some large $C^*>1$, then $u_0$ changes sign in $2q_i$. Assume that $u_0$ does not change sign in $2q_i$. By the Harnack inequality,
	\begin{align}
	\sup_{q_i}|u_0|\leq C_1 \inf_{q_i}|u_0|\leq C_1\delta \sup_Q|u_0|.
	\end{align}
	On the other hand, by the monotonicity of the doubling index in cubes (\ref{monodouble1}),
	\begin{align}
	\sup_{q_i}|u_0|\geq C_3 K^{-C_2{N_0}}\sup_{Q}|u_0|= C_3 \delta^{C_2\tau}\sup_{Q}|u_0|.
	\end{align}
	Choosing $\tau=\frac{1}{2C_2}$, we reach a contradiction if \begin{align*}\delta^{\frac{1}{2}}\leq \min \bigg\{ \frac{C_3}{C_1}, \ e^{-{C_0 C_2N_0}} \bigg\}. \end{align*}
	Since $N_0\ge 1$, the last inequality holds if we choose $\delta < e^{-{C^\ast N_0}}$. This proves the claim.
	Hence, there are zeros in each $2q_i$ and $q_i\cap G_\delta\not =\emptyset$ with
	\begin{align}
	G_\delta \subset \bigcup^m_{i=1} q_i.
	\end{align}
	This implies (\ref{set1}) as we may replace $q_i$ by $B_{t_i}(x_i)$ with the same center and $t_i=\frac{ s(q_i)\sqrt{d}}{2}$.
	

	Next, to show the first part of (\ref{est.sum.ti}), we need to estimate the number $m$ of the cubes $q_i$. Recall that $4q_i\subset Q$ and each point in $Q$ may be covered by at most a finite number of $4q_i$. By the lower bound estimate of nodal sets in \cite{L18-2}, we have
	\begin{align}\label{est.lower.InQ}
	H^{d-1}(\{u_0=0\}\cap Q)\geq C\sum_{i=1}^m H^{d-1}(\{u_0=0\}\cap 4q_i)\geq C_4 m \Big(\frac{s(Q)}{K} \Big)^{d-1}.
	\end{align}
	On the other hand, by the upper bound estimate of nodal sets in \cite{L18-1}, it holds that
	\begin{align}\label{est.upper.InQ}
	H^{d-1}(\{u_0=0\}\cap Q)\leq C_5 N_0^{C_6} s(Q)^{d-1},
	\end{align}
where $C_6>\frac{1}{2}$.
	Combining (\ref{est.lower.InQ}) and (\ref{est.upper.InQ}), we arrive at
	\begin{align}
	C_4 m \Big(\frac{s(Q)}{K}\Big)^{d-1}\leq C_5{N}_0^{C_6} s(Q)^{d-1},
	\end{align}
	which yields
	\begin{align}
	m\leq \frac{C_5}{C_4} {N}_0^{C_6} K^{d-1}.
	\end{align}
	Thus,
	\begin{align}
	\sum_{i=1}^m t_i^{d-1} = C_d m\cdot s(q_i)^{d-1} \leq C{N}_0^{C_6} K^{d-1} \Big(\frac{s(Q)}{K}\Big)^{d-1}\leq C {N}_0^{C_6} s(Q)^{d-1}.
	\end{align}
	This proves (1).
	
	(2) Next, we establish the estimates (\ref{set2}) and (\ref{est.sum.hattj}).
	We divide the cube $\frac{1}{2}Q$ into $K_1^d$ subcubes with side length $\frac{s(Q)}{2K_1}$. The size of $K_1$, depending on $\hat{\delta}$, will be chosen later. The cube $q_j$ is called bad if
	\begin{align}\label{est.bad}
	\inf_{q_j}|\nabla u_0|\leq c\sup_{2q_j}|\nabla u_0|
	\end{align}
	for some small $c$ depending only on $d$. We claim that the number of bad cubes $q_j$ is not greater than $e^{C_d \hat{N}_0^2} K_1^{d-2}$, where $C_d$ depends on $d$.
	
	To show the above claim, we need to use \cite[Theorem 1.1]{NV17}. Recall the \textit{effective} critical set is defined as
	\begin{equation*}
	\mathcal{C}_r(u_0) = \bigg\{ x\in Q: \inf_{B_r(x)} r^2|\nabla u_0|^2 \le \frac{d}{16} \fint_{\partial B_{2r}(x)} (u - u(x))^2  \bigg\}.
	\end{equation*}
	Let $B_r(\mathcal{C}_r(u_0))$ be the $r$-neighborhood of $\mathcal{C}_r(u_0)$, namely, $B_r(\mathcal{C}_r(u_0)) = \{ x\in Q: \txt{dist}(x, \mathcal{C}_r(u_0)) < r \}$. Then \cite[Theorem 1.1]{NV17} implies
	\begin{equation}\label{est.Vol.BCr}
	|B_r(\mathcal{C}_r(u_0)) \cap B_s | \le C^{\big( \widetilde{N}(u_0,B_{2s} ) \big)^2} \Big( \frac{r}{s} \Big)^2 |B_s|,
	\end{equation}
	where $B_s, B_{2s}$ are concentric balls such that $B_{4s} \subset Q$ and $\widetilde{N}$ is the modified frequency function defined by
	\begin{equation*}
	\widetilde{N}(u_0,B_{2s}) := \frac{2s \int_{B_{ 2s}} |\nabla u_0|^2}{ \int_{ \partial B_{2s}} ( u_0 - { u_0(z)} )^2 },
	\end{equation*}
	where $z$ is the center of $B_s$.
	By \cite[Corollary 2.2.6]{HL13} and the mean value property of harmonic functions, we have
	\begin{equation*}
	\widetilde{N}(u_0,B_{2s}) \le C \log_2 \frac{\int_{B_{4s}} (u_0 - {u_0(z)})^2 }{\int_{B_{2s}} (u_0 -  u_0(z))^2  } \le C\log_2 \frac{ \sup_{B_{4s}}|\nabla u_0| }{\sup_{B_{s}}|\nabla u_0|} \le CN(\nabla u_0,Q) \le C\hat{N}_0,
	\end{equation*}
	where we have also used a gradient estimate for harmonic functions in the second inequality.
	Hence, (\ref{est.Vol.BCr}) implies
	\begin{equation}\label{est.BrCr.hatN}
		|B_r(\mathcal{C}_r(u_0)) \cap B_s | \le C^{\hat{N}_0^2 } \Big( \frac{r}{s} \Big)^2 |B_s|.
	\end{equation}
	Next, we show that if $q_j$ is a bad cube with sufficiently small $c$, then $q_j \cap \mathcal{C}_r (u_0) \neq \emptyset.$ Actually if $q_j$ is bad and $x_j$ is the point in $q_j$ so that $|\nabla u_0(x_j)| = \inf_{{q_j}} |\nabla u_0|$, then
	\begin{equation*}
	\inf_{B_r(x_j)} |\nabla u_0| \le |\nabla u_0(x_j)| = \inf_{q_j} |\nabla u_0| \le c\sup_{2q_j}|\nabla u_0|,
	\end{equation*}
	where we used the condition (\ref{est.bad}) in the last inequality. Fix $r =2\sqrt{d} s(q_j)$. Then $2q_j \subset B_r(x_j)$. It follows from the gradient estimate and the Caccioppoli inequality that
	\begin{equation*}
	\begin{aligned}
	\inf_{B_r(x_j)} r |\nabla u_0| \le c r \sup_{B_r(x_j)}|\nabla u_0|& \le c C_d r \bigg( \fint_{B_{\frac32 r}(x_j)} |\nabla u_0|^2 \bigg)^{1/2} \\
	 & \le c C_d^2  \bigg( \fint_{B_{2r}(x_j)} | u_0 - u_0(x_j) |^2 \bigg)^{1/2}.
	\end{aligned}
	\end{equation*}
	In view of \cite[Corollary 2.2.7]{HL13}, we have
	\begin{equation*}
	\begin{aligned}
	\inf_{B_r(x_j)} r |\nabla u_0| & \le \frac{c C_d^2 }{d} \bigg( \fint_{\partial B_{2r}(x_j)} | u_0 - u_0(x_j) |^2 \bigg)^{1/2} \\
	& \le \sqrt{\frac{d}{16}}\bigg( \fint_{\partial B_{2r}(x_j)} | u_0 - u_0(x_j) |^2 \bigg)^{1/2},
	\end{aligned}
	\end{equation*}
	where in the last inequality, we choose $c$ small so that $c C_d^2/d < \sqrt{d/16}$. This implies that $x_j \in \mathcal{C}_r(u_0)$ and $q_j\cap \mathcal{C}_r(u_0) \neq \emptyset$. Because $r = 2\sqrt{d} s(q_j)$, we have $q_j \subset B_r(\mathcal{C}_r(u_0))$. This means that all the bad cubes $q_j$ are contained in $B_r(\mathcal{C}_r(u_0))$. Finally, let $s$ be comparable to $s(Q)$ and note that $\frac12 Q$ can be covered by finitely many, depending only on $d$, $B_s$ with $B_{4s} \subset Q$. Then, by (\ref{est.BrCr.hatN}), the total volume of bad cubes in $\frac12 Q$ is bounded by $C^{\hat{N}_0^2} \big( s(q_j)/s(Q) \big)^2 |Q| \le C^{\hat{N}_0^2} K_1^{-2} |Q|$. Hence, the number of bad cubes is not greater than $C^{\hat{N}_0^2} K_1^{d-2}$. The claim has been proved.

	Now, for any $q_j$, the monotonicity of the doubling index of $\nabla u_0$ in cubes in (\ref{monodouble2}) shows that
	\begin{align}
	\sup_{q_j}|\nabla u_0| \geq C_8 K_1^{-C_7\hat{N}_0} \sup_{Q}| \nabla u_0|.
	\end{align}
	If $q_j$ is not bad, the reverse inequality of (\ref{est.bad}) yields
	\begin{align}\label{est.notbad}
	\inf_{q_j}|\nabla u_0| > C_9K_1^{-C_7\hat{N}_0}\sup_{Q}| \nabla u_0|.
	\end{align}
	
	Given $\hat{\delta}$, small enough (to be quantified later), we want to estimate the set $\hat{G}_{\hat{\delta}}$ defined in (\ref{set2}).
	If $q_j$ is not bad and we choose  $K_1$ to be the smallest integer such that
	\begin{align}
	C_9(K_1+1)^{-C_7\hat{N}_0}<  \hat{\delta},
	\end{align} then (\ref{est.notbad}) gives
	\begin{equation*}
	\inf_{q_j}|\nabla u_0|\geq C_9K_1^{-C_7\hat{N}_0}\sup_{Q}| \nabla u_0|>\hat{\delta}\sup_{Q}| \nabla u_0|.
	\end{equation*}
	This implies that $q_j$ does not intersect $\hat{G}_\delta$. It  also shows that $K_1\approx \hat{\delta}^{\frac{-1}{C_{10}\hat{N}_0}}$.
	Thus, the set $\hat{G}_{\hat{\delta}}$ is covered by the union of bad cubes of size $\frac{s(Q)}{2K_1}$. Again, we may now replace bad $q_j$ by $B_{\hat{t}_j}(x_j)$ with the same center and $\hat{t}_j = \frac{ s(q_j)\sqrt{d}}{2}$. Let $\hat{m}$ be the number of bad cubes and recall that $e^{C_d \hat{N}_0^2} K_1^{d-2} \ge \hat{m}$.
	It follows that
	\begin{align}
	\sum^{\hat{m}}_{j=1} \hat{t}_j^{d-1} = C \hat{m}\cdot s(q_j)^{d-1} &\leq  C e^{C_d \hat{N}_0^2} K_1^{d-2} \bigg(\frac{s(Q)}{2K_1}\bigg)^{d-1}\nonumber \\
	&\leq \Big(\frac{1}{2}\Big)^{d-1}s^{d-1}(Q) C e^{C_d \hat{N}_0^2} K_1^{-1}\nonumber \\
	&\leq \Big(\frac{1}{2}\Big)^{d-1}s^{d-1}(Q) C e^{C_d \hat{N}_0^2} {\hat{\delta}}^{\frac{1}{C_{10}\hat{N}_0}} \nonumber \\
	&\leq \Big(\frac{1}{4}\Big)^{d} s^{d-1}(Q)
	\end{align}
	where  we have chosen $\hat{\delta}\leq  e^{-C_{11} \hat{N}_0^3}$ in the last inequality. This completes the proof.
\end{proof}

Next we estimate the density function $E_\e(y, r)$ of nodal sets, which is the initial step for an iterative argument to obtain Theorem \ref{mainth}. The following lemma is a quantitative version of \cite[Lemma 4.5]{LS19}.
Without loss of generality, we may identify $Q$ in Lemma \ref{strati} by $B_\frac{1}{8}$.

\begin{lemma}
	Let $\beta\in (\frac{3}{4}, 1)$ and (\ref{NewPrecond}) hold. If $\e\leq \exp(-C(\ln N)^3)$, then  there exists a finite sequence of balls $\{ B_{\hat{t}_j}(y_j): j= 1\cdots, \hat{m}\}$ such that $y_j\in B_{\frac{1}{16}}$, $\hat{t}_j \in (0, \frac{1}{128})$ and
	\begin{align}
	E_\e(0, \frac{1}{16})\leq \exp(CN^\frac{2}{\beta-\frac{3}{4}})+\frac{1}{4}\sup_{1\le j \le \hat{m}} E_\e(y_j, \hat{t}_j),
	\label{keyineq}
	\end{align}
	where $c$ and $C$ depend on $d$, $\Lambda$ and $\gamma$.
	\label{keylemma}
\end{lemma}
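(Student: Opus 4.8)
\emph{Proof plan.} The plan is to run the homogenization approximation of Lemma~\ref{lem.1step} once at the unit scale, reduce to a harmonic $u_0$ with doubling indices $O(\ln N)$, and split $Z(u_\e)\cap B_{1/16}$ into a part trapped in a thin tube around the regular nodal set of $u_0$ (bounded directly via Lemma~\ref{smallscale}) and a part near the critical set of $u_0$ (fed to the recursion via Lemma~\ref{strati}(2)).

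First I would apply Lemma~\ref{lem.1step} with $r=1$; this is legitimate because $\e\le\exp(-C(\ln N)^3)\ll(3C\sqrt N)^{-1}$ and (\ref{NewPrecond}) holds, and it produces $u_0$ with $\cL_0(u_0)=0$ in $B_{7/4}$, $\|u_\e-u_0\|_{L^\infty(B_{3/2})}\le C\e\|u_\e\|_{\underline{L}^2(B_2)}$, and $\int_{B_1}u_0^2\le16N^2\int_{B_{1/2}}u_0^2$. Since $u_0$ is harmonic, the last inequality, the monotonicity of its doubling index (convexity of $s\mapsto\log_2\fint_{B_{2^s}}u_0^2$ as after (\ref{est.phi.u0})), and interior gradient estimates give $N(u_0,Q)\le C\ln N=:N_0$ and $N(\nabla u_0,Q)\le C\ln N=:\hat N_0$ on the cube $Q$ identified with $B_{1/8}$. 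Rescaling so that $\sup_Q|u_0|=1$ and using the doubling inequalities (Theorem~\ref{thm.large.r}, and (\ref{unbelieve}) for the harmonic $u_0$) to bound $\|u_\e\|_{\underline{L}^2(B_2)}\lesssim N^C\sup_Q|u_0|$, the closeness becomes $\|u_\e-u_0\|_{L^\infty(B_{3/2})}\le C\e N^C=:\delta$. The exact form of the hypothesis is now visible: with $\hat\delta:=\exp(-C\hat N_0^3)=\exp(-C(\ln N)^3)$ (the threshold permitted in Lemma~\ref{strati}(2)), the bound $\e\le\exp(-C(\ln N)^3)$ is precisely what makes $\delta<\min\{\exp(-C^*N_0),\hat\delta^2\}$, the smallness required by the two parts of Lemma~\ref{strati}. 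We may assume $u_\e$ vanishes at some $p\in B_{1/16}$ (else $E_\e(0,\tfrac1{16})=0$), so $|u_0(p)|\le\delta$ and hence $\sup_Q|\nabla u_0|\ge c>0$.

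Next I would apply Lemma~\ref{strati}(2) with threshold $\hat\delta$, obtaining a cover $\hat G_{\hat\delta}\subset\bigcup_{j=1}^{\hat m}B_{\hat t_j}(y_j)$ with $y_j\in\tfrac12 Q=B_{1/16}$, $\sum_j\hat t_j^{d-1}\le\tfrac14(s(Q)/4)^{d-1}$, and — since $K_1\approx\hat\delta^{-1/(C_{10}\hat N_0)}$ is large — $\hat t_j<\tfrac1{128}$. Decompose $Z(u_\e)\cap B_{1/16}$ into its intersection with $\hat G_{\hat\delta}$ and with $\{|\nabla u_0|\ge\hat\delta\sup_Q|\nabla u_0|\}$. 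On the latter set $|\nabla u_0|\ge c\hat\delta$, so every zero of $u_\e$ there has $|u_0|\le\delta$; since $\delta\le\hat\delta^2$ and $\|\nabla^2u_0\|_{L^\infty}\le C$, moving along $\nabla u_0$ shows this part of $Z(u_\e)$ lies in the $C\delta/\hat\delta$-tube around the portion of $Z(u_0)$ where $|\nabla u_0|\gtrsim\hat\delta$. By Logunov's bound \cite{L18-1} in the form (\ref{est.upper.InQ}) and $N(u_0,Q)\le N_0$, $H^{d-1}(Z(u_0)\cap Q)\le CN_0^C$; as $\delta\le\hat\delta^2$ makes this tube thin at the nodal curvature scale and $\e<C\delta/\hat\delta$, the tube is covered by $\lesssim N_0^C(\delta/\hat\delta)\e^{-d}$ balls of radius $\e$, in each of which Lemma~\ref{smallscale} with $r=\e$ gives $H^{d-1}(Z(u_\e)\cap B_\e)\le2\exp(CN^{2/(\beta-3/4)})\e^{d-1}$. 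Summing, this part has $(d-1)$-measure at most $CN_0^C N^C\hat\delta^{-1}\exp(CN^{2/(\beta-3/4)})\le\exp(C'N^{2/(\beta-3/4)})$, since $2/(\beta-3/4)>8$ absorbs the polynomial- and polylog-in-$N$ factors. For the remaining piece, $H^{d-1}(Z(u_\e)\cap\hat G_{\hat\delta})\le\sum_j E_\e(y_j,\hat t_j)\hat t_j^{d-1}\le\tfrac14(s(Q)/4)^{d-1}\sup_jE_\e(y_j,\hat t_j)$. Adding the two contributions and dividing by $(1/16)^{d-1}$ yields (\ref{keyineq}).

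The main obstacle is the tube estimate on the non-critical region. Homogenization only delivers pointwise $L^\infty$ closeness of $u_\e$ to $u_0$ — there is no $C^1$ closeness, since $\nabla u_\e$ carries the non-vanishing oscillating corrector $\nabla_y\chi(x/\e)\nabla u_0$ — so $Z(u_\e)$ cannot be realized as a small perturbation of a smooth graph, and one is forced into the multiscale covering down to the $\e$-scale, tracking all constants so that the total comes out as $\exp(CN^{2/(\beta-3/4)})$ while the critical contribution genuinely carries the coefficient $\tfrac14$. Making the bookkeeping close up consistently — the side-length condition $\hat t_j<1/128$, the coefficient $\tfrac14$ from Lemma~\ref{strati}(2), and the compatibility $\delta\le\hat\delta^2$ with $\hat\delta=\exp(-C\hat N_0^3)$, which is exactly what forces $\e\le\exp(-C(\ln N)^3)$ — is the delicate point.
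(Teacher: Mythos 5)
Your overall architecture matches the paper's: homogenize once at unit scale via Lemma \ref{lem.1step}, deduce doubling indices $O(\ln N)$ for $u_0$ and $\nabla u_0$, split $Z(u_\e)\cap B_{1/16}$ into a near-critical part (fed to the recursion through Lemma \ref{strati}(2), producing the $\tfrac14\sup_j E_\e(y_j,\hat t_j)$ term) and a part where $|\nabla u_0|$ is bounded below (controlled at scale $\e$ by Lemma \ref{smallscale}). The identification of the role of $\e\le \exp(-C(\ln N)^3)$, the source of $\hat t_j<1/128$, and the final absorption of the $\hat\delta^{-1}$ and polynomial factors into $\exp(CN^{2/(\beta-3/4)})$ are all consistent with the paper.

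There is, however, one genuine gap. For the non-critical part you pass from the Hausdorff bound $H^{d-1}(Z(u_0)\cap Q)\le CN_0^{C}$ directly to the assertion that the $C\delta/\hat\delta$-tube around $Z(u_0)$ is covered by $\lesssim N_0^{C}(\delta/\hat\delta)\e^{-d}$ balls of radius $\e$. Hausdorff measure of a set does not control the volume of its neighborhoods or its covering numbers at a fixed scale, so this step is not justified as written. The paper avoids this by \emph{not} covering $Z(u_0)$ at scale $\e$ directly: it first invokes Lemma \ref{strati}(1) to cover the sublevel set $\{|u_0|<\delta\sup_Q|u_0|\}$ by balls $B_{t_i}(x_i)$ of radius $t_i\gg\e$ with $\sum_i t_i^{d-1}\le CN_0^{C}s(Q)^{d-1}$ — and the proof of that covering bound is exactly the missing ingredient, namely a cube-counting argument combining Logunov's upper bound (\ref{est.upper.InQ}) with the \emph{lower} bound on nodal measure from \cite{L18-2} in each cube where $u_0$ changes sign. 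Only then, inside each $B_{t_i}(x_i)$, does it run the cylinder/slab construction (your ``moving along $\nabla u_0$'' step) to reduce to $\e$-balls with count $\lesssim t_i^{d-1}N^4/(\e^{d-1}\hat\delta^2)$. Your shortcut is repairable — either by invoking Lemma \ref{strati}(1) as the paper does, or by reproducing its cube-counting argument to bound the number of $\e$-scale cubes meeting $Z(u_0)$ — but as stated the covering count is the one step of the proof that does not follow.
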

\begin{proof}
	We assume $E_{\e}(0,\frac{1}{16})>0$. Otherwise, (\ref{keyineq}) is trivial. We will make use of the approximation estimate by a harmonic function in Lemma \ref{lem.1step}. Using (\ref{LSS}) with $r =1$, we have \begin{align}
	\|u_\e-u_0\|_{L^\infty(B_\frac{1}{8})}\leq \hat{C}\e \|u_\e\|_{L^2(B_2)}.
	\label{compare}
	\end{align}
	
	By normalization, we may assume that $\int_{B_{2}} u_\e^2 =1$. We would like to estimate the doubling index for $u_0$ and $\nabla u_0$. From (\ref{noreal}) and convexity of $\varphi$ in (\ref{convex}), we have 
	\begin{align}
	\|u_0\|_{L^2(B_{\frac{3t}{2}})}
	&\leq 2\sqrt{N} \|u_0\|_{L^2(B_{{t}})}
	\label{haodouble}
	\end{align}
	for all $0<t<1$.
	By elliptic estimates and using (\ref{haodouble}) twice,
	\begin{equation}\label{est.uDu.Linfty}
	\begin{aligned}
	\|u_0\|_{L^\infty(B_1)} + \|\nabla u_0\|_{L^\infty(B_1)} & \leq C \bigg(\int_{B_\frac{9}{8}} u^2_0 \bigg)^\frac{1}{2} \\
	& \leq C\sqrt{N}  \bigg(\int_{B_{\frac{3}{4}}} u^2_0 \bigg)^\frac{1}{2} \leq C{N}  \bigg(\int_{B_\frac{1}{2} } u^2_0 \bigg)^\frac{1}{2} \le CN\| u_0 \|_{L^\infty(B_{\frac12})}.
	\end{aligned}
	\end{equation}
	The above estimate includes
	\begin{equation}
	\| u_0\|_{L^\infty({B_{1}})} \leq CN \| u_0\|_{L^\infty({B_{\frac{1}{2}}})} .
	\label{inftymono}
	\end{equation}
Thus, $N(u_0, B_{\frac{1}{2}})\leq \log_2 {(CN)}$.
Similarly to the frequency function in Section 3, one can define the frequency function introduced for the harmonic function $u_0$ as
\begin{align*}
\mathcal{N}(x, r)=\frac{r \int_{ B_r(x)}|\nabla u_0|^2}{\int_{ \partial B_r(x)} u_0^2  }.
\end{align*}
By Theorem 2.2.8 in \cite{HL13}, it holds that
\begin{align}\mathcal{N}(x, \ \frac{1}{2}(\frac{1}{2}-R))\leq C\mathcal{N}(0, \frac{1}{2})
\label{HLmono}
\end{align}
for any $x\in B_R$ and $ 0<R<\frac{1}{2}$. It is known that the
doubling index and the frequency function are comparable.
It follows from Lemma 7.1 in \cite{L18-1} that
\begin{align}
\mathcal{N}(0, \frac{1}{2})\leq C N(u_0, B_{\frac{1}{2}})\leq C\log_2 {(CN)},
\end{align}
where $C$ depends on $d$. From (\ref{HLmono}), let $R=\frac{1}{8}$, we obtain that
\begin{align*}
\mathcal{N}(x, \frac{3}{16})\leq C\log_2 {(CN)}.
\end{align*}
Using Lemma 7.1 in \cite{L18-1} again, we see that
\begin{align}
 N(u_0, B_{r}(x))\leq C\mathcal{N}(x, \frac{3}{16})\leq C\log_2 {(CN)}
\end{align}
for any $0<r\leq \frac{1}{8}.$
Thus, we obtain that
	\begin{align}
	\| u_0\|_{L^\infty({B_{2r}(x)})}  \leq CN^{C_0} 	\| u_0\|_{L^\infty({B_{r}(x)})}
	\label{detailmono1}
	\end{align}
	for any $0<r\leq \frac{1}{8}$ and any $x\in B_{\frac{1}{8}}$, where $C_0$ depends on $d$.


	Next, we estimate the doubling index of $\nabla u_0$. We first claim that there are zeros for $u_0$ in $B_\frac{1}{8}$. In fact, from (\ref{compare}) and Theorem \ref{thm.large.r} with $\theta = 1/2$, we have 
	\begin{align}
	\|u_\e-u_0\|_{L^2(B_\frac{1}{8})}\leq {C}{N^2}\e \|u_\e\|_{L^2(B_\frac{1}{8})}.
	\end{align}
	Then
	\begin{align}
	\|u_0\|_{L^2(B_\frac{1}{8})}&\geq (1-{C}{N^2}\e) \|u_\e\|_{L^2(B_\frac{1}{8})} \nonumber \\
	&\geq \frac{(1-{C}{N^2}\e)}{C{N^2}} \|u_\e\|_{L^2(B_2)}.
	\end{align}
	It follows that
	$\|u_0\|_{L^2(B_\frac{1}{8})}\geq \frac{1}{CN^2}$ if $\e< cN^{-{2}}$. Hence, (\ref{unbelieve}) implies 
	\begin{align}\label{est.Linfty.14}
	\frac{1}{CN^2}\leq \|u_0\|_{L^2(B_\frac{1}{8})}\leq CN\|u_0\|_{L^2(B_\frac{1}{16})}.
	\end{align}
	Now, let us assume that $u_0$ has no zeros in $B_{\frac18}$ and therefore does not change signs in $B_{\frac18}$. Without loss of generality, we may assume that $u_0$ is positive. By the Harnack inequality and (\ref{est.Linfty.14}),
	\begin{align}
	\inf_{B_\frac{1}{16}}|u_0|\geq C \sup_{B_\frac{1}{16}}|u_0|\geq \frac{1}{CN^3}.
	\end{align} From (\ref{compare}), for $x\in B_\frac{1}{16}$,  we get
	\begin{align}
	\frac{1}{CN^3}-\hat{C}\e \leq u_\e(x).
	\end{align}
	Since $\e<CN^{-3}$, then $u_\e(x)>0$ for $x\in B_\frac{1}{16}$. This contradicts our assumption that $E_\e(0,\frac{1}{16}) > 0$. Thus, the claim has been shown.
	

	Now, since $u_0$ has zeros in $B_{\frac18}$ (and hence in $B_{\frac12}$), we obtain from (\ref{est.uDu.Linfty}) and the mean value theorem that
	\begin{align}
	\sup_{B_{1}}|\nabla u_0| \leq CN \sup_{B_\frac{1}{2}}|\nabla u_0|.
	\label{gradmono}
	\end{align}
	Again, by the relation of the frequency function for $|\nabla u_0|$ and the doubling index, we can argue as the derivation of (\ref{detailmono1}) that (\ref{gradmono})
implies that
	\begin{align}
	\sup_{B_{2r}(x)}|\nabla u_0| \leq CN^{C_0} \sup_{B_r(x)}|\nabla u_0|
\label{detailmono}
	\end{align}
	for any $0<r\leq \frac{1}{8}$ and any $x\in B_{\frac{1}{8}}$. 

	Thanks to the monotonicity of the doubling index for $u_0$ and $\nabla u_0$,
	from the definition of $N(u_0, Q)$ and $N(\nabla u_0, Q)$, we know that $N(u_0, Q) \le {N}_0:= C \log N$ and $N(\nabla u_0, Q) \le \hat{N}_0:= C \log N$.
	In order to apply Lemma \ref{strati}, we assume that $\hat{C}\e\leq \delta\leq e^{-C^\ast C\log N}$ and
	$\frac{\hat{\delta}}{2}\approx  \frac{1}{2} e^{-C(\log N)^3}$. Thus, we require $\e\leq CN^{-\alpha}$ for some $\alpha$ depending on $d$, which is satisfied by the assumption of $\e$ in the lemma.
	With the aid of (\ref{compare}), we have
	\begin{align}
	Z(u_\e)\cap B_{\frac{1}{16}} &\subset Z(u_\e)\cap \{x\in B_{\frac{1}{16}}: |u_0(x)|\leq \hat{C}\e\} \nonumber \\
	&\subset Z(u_\e)\cap \{x\in {B_{\frac{1}{16}}}: |u_0(x)|\leq \hat{C}\e \ \mbox{and} \ |\nabla u_0(x)|\geq \frac{\hat{\delta}}{2}\sup_{{B_{\frac{1}{8}}}}|\nabla u_0|\} \nonumber \\ & \bigcup
	Z(u_\e)\cap \{x\in {B_{\frac{1}{16}}}: |u_0(x)|\leq \hat{C}\e \ \mbox{and} \ |\nabla u_0(x)|\leq \frac{\hat{\delta}}{2}\sup_{B_{\frac{1}{8}}}|\nabla u_0|\} \nonumber \\
	&\subset \big( \cup^m_{i=1}Z(u_\e)\cap G_i\big) \bigcup \big( \cup^{\hat{m}}_{j=1}Z(u_\e)\cap B_{\hat{t}_j}(y_j) \big),
	\end{align}
	where
	\begin{align}
	G_i=\bigg\{ x\in B_{{t}_i}(x_i)| \ |u_0(x)|\leq \hat{C}\e \ \mbox{ and } \ |\nabla u_0(x)|\geq \frac{\hat{\delta}}{2}\sup_{B_{\frac{1}{8}}}|\nabla u_0| \bigg\},
	\end{align}
	and $B_{\hat{t}_j}(y_j)$ and $B_{t_i}(x_i)$ are given by Lemma \ref{strati}.
	Thus, it follows from Lemma \ref{strati}  that
	\begin{align}
	H^{d-1}(Z(u_\e)\cap {B_{\frac{1}{16}}})&\leq \sum^m_{i=1} H^{d-1}(Z(u_\e)\cap G_i  )+\sum^{\hat{m}}_{j=1} H^{d-1}(Z(u_\e)\cap B_{\hat{t}_j}(y_j)  )\nonumber \\
	&\leq \bigg( \sup_{i} \frac{H^{d-1}(Z(u_\e)\cap G_i  )}{t_i^{d-1}} \bigg) \sum^m_{i=1} t_i^{d-1}+\sup_{j} E_\e(y_j, \hat{t}_j) \sum^{\hat{m}}_{j=1} \hat{t}_j^{d-1} \nonumber \\
	&\leq C(\frac{1}{16})^{d-1}(\log N)^{\hat{\alpha}} \sup_{i} \frac{H^{d-1}(Z(u_\e)\cap G_i  )}{t_i^{d-1}}+\frac{1}{4} (\frac{1}{16})^{d-1} \sup_{j} E_\e(y_j, \hat{t}_j),
	\label{import}
	\end{align}
where $\hat{\alpha}$ depends on $d$.
	Since $N$ is large, by the decomposition in Lemma \ref{strati}, we may assume $0< \hat{t}_j<\frac{1}{128}$. 
	
	Next we estimate the upper bound for $H^{d-1}(Z(u_\e)\cap G_i)$ for each $i$. We will discuss two cases $t_i\leq \hat{C}\e$ and
	$t_i\geq \hat{C}\e$. If $t_i\leq \hat{C}\e$, by  Lemma \ref{smallscale},
	\begin{align}
	H^{d-1}(Z(u_\e)\cap G_i)&\leq H^{d-1}(Z(u_\e)\cap B_{t_i}(x_i)) \nonumber \\
	&\leq C \exp(CN^\frac{2}{\beta-\frac{3}{4}}) t_i^{d-1}.
	\label{smallt}
	\end{align}
	
	Now, we consider the case $t_i\geq \hat{C}\e$. Note that (\ref{est.Linfty.14}), the fact that $u_0$ has zeros in $B_{\frac18}$ and the definition of $G_i$ imply
	\begin{equation}\label{est.Gi.Du0}
	|\nabla u_0(x)|\geq \frac{\hat{\delta}\tilde{C}}{2N^2}, \quad \text{and} \quad |u_0(x)| \le \hat{C}\e
	\end{equation}
	for any point $x\in G_i$. 	
	Fix $i$. For $k = 1,2,\cdots, d$, define
	\begin{equation*}
	F_k^{\pm} = \Big\{ x\in B_{t_i}(x_i)  \Big|\ |u_0(x)| \le \hat{C} \e, \ \pm \frac{\partial u_0}{ \partial x_k}(x) \ge \frac{\hat{\delta} c }{2dN^2} \Big\}.
	\end{equation*}
	Then (\ref{est.Gi.Du0}) implies that $G_i$ is contained in $\cup_{k =1}^d (F_k^+ \cup F_k^{-})$.
	Without loss of generality, it suffices to estimate $F_k^+$. By the $C^2$ regularity of $u_0$, for any $x_0\in F_k^+$, there exists a cylinder $\mathcal{C}(x_0)$ centered at $x_0$, whose base is a square perpendicular to $e_k$ with side length $C\e$, such that the height of $\mathcal{C}(x_0)$ is $\frac{\hat{\delta} c_1 }{2dN^2}$ and
	\begin{equation}\label{est.Dku0+}
	\frac{\partial u_0}{ \partial x_k}(x) \ge \frac{\hat{\delta} c_1 }{2dN^2}, \quad \txt{for any } x\in \mathcal{C}(x_0),
	\end{equation}
	where $c_1>0$ is a constant smaller than $c$.

	\begin{figure}[h]
		\begin{center}
			\includegraphics[scale =0.4]{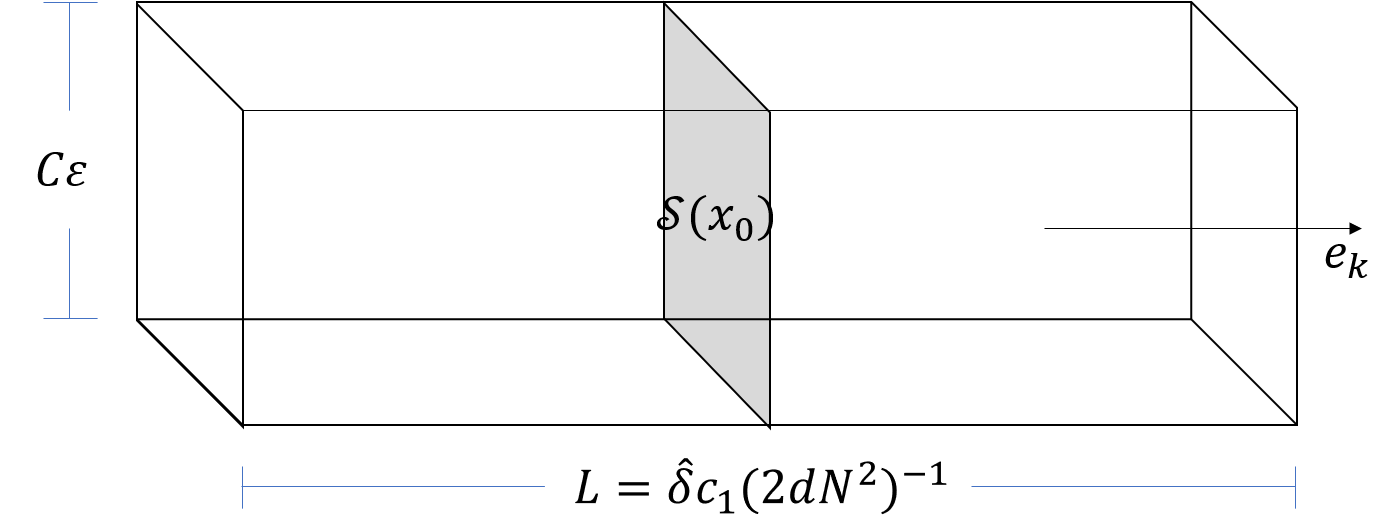}
		\end{center}
		\vspace{-1 em}
		\caption{The cylinder $ \mathcal{C}(x_0)$}\label{fig_1}
	\end{figure}

	We would like to show that $\mathcal{C}(x_0) \cap F_k^+$ can be covered by $m_1$ balls with radius $C\e$, where $m_1 \le \frac{C_2 2dN^2 }{\hat{\delta}}$. Let $\mathcal{S}(x_0)$ be the cross section containing $x_0$ of the cylinder $\mathcal{C}(x_0)$ which is perpendicular to $e_k$. Since $|\nabla u_0| \le C$ and $|u_0(x_0)| \le \hat{C}\e$, we see that $|u_0(y)| \le C_1\e$ for any $y\in \mathcal{S}(x_0)$. Next, because of (\ref{est.Dku0+}), for any $y \in \mathcal{S}(x_0)$ and $t>0$,
	\begin{equation*}
	u_0(y+te_k) \ge t \frac{\hat{\delta} c_1 }{2dN^2} - C_1 \e.
	\end{equation*}
	This implies that $y+ t e_k \notin F_k^+$ if $t > \frac{(C_1+\hat{C}) \e 2dN^2}{\hat{\delta} c_1}$. Similarly, $y+ t e_k \notin F_k^+$ if $t < - \frac{(C_1+\hat{C}) \e 2dN^2}{\hat{\delta} c_1}$. This implies that
	\begin{equation*}
	F_k^+ \cap \mathcal{C}(x_0) \subset \Big\{ y+te_k | \ y\in \mathcal{S}(x_0), \ |t| \le \frac{(C_1+\hat{C}) \e 2dN^2}{\hat{\delta} c_1} \Big\}.
	\end{equation*}
	Consequently, $F_k^+ \cap \mathcal{C}(x_0)$ can be covered by $m_1$ balls with radius $C\e$ and $m_1 \le \frac{C_2 2dN^2 }{\hat{\delta}}$.
	\\
	Now, because $F_k^+ \subset B_{t_i}(x_i)$ can be covered by $m_2$ cylinders, with
	\begin{equation*}
		m_2 \le
		\left\{
		\begin{aligned}
		 	&\frac{C|B_{t_i}|}{|\mathcal{C}(x_0)|} = \frac{Cdt_i^d N^2}{\hat{\delta} \e^{d-1}}, \qquad \text{if } t_i \ge L = \frac{\hat{\delta} c_1}{2dN^2}, \\
			&\frac{Ct_i^{d-1}}{(C\e)^{d-1}} = \frac{Ct_i^{d-1} }{\e^{d-1}}, \qquad \text{if } \hat{C}\e \le t_i \le L = \frac{\hat{\delta} c_1}{2dN^2},
		\end{aligned}
		\right.
	\end{equation*}
	such that the $\mathcal{C}(x_0)$'s have finite overlaps, then $F_k^+$ can be covered by $m$ balls with radius $C\e$ (denoted by $\{ B_{C\e}(z_{k,\ell}^+): \ell = 1,2,\cdots, m \}$) , where
	\begin{equation*}
	m = m_1 m_2 \le \frac{C t_i^{d-1} d^2 N^4}{\e^{d-1} \hat{\delta}^2 }.
	\end{equation*}
	Note that the same estimate holds for $F_k^{-}$ as well for each $k = 1,2,\cdots, d$.
	 Hence, by Lemma \ref{smallscale}, we derive that
	\begin{align}
	H^{d-1}(Z(u_\e)\cap\{x\in B_{t_i}(x_i):|\nabla u_0|\geq \frac{\tilde{C}\hat{\delta}}{2N}\}  )
	&\leq \sum_{k = 1}^d \sum^{m}_{l=1} H^{d-1}(Z(u_\e)\cap B_{C\e}(z_{k,l}^{\pm})) \nonumber \\
	&\leq C \exp(CN^\frac{2}{\beta-\frac{3}{4}}) m \e^{d-1} \nonumber \\
	&\leq C N^4 \hat{\delta}^{-2}  \exp(CN^\frac{2}{\beta-\frac{3}{4}}) t_i^{d-1}\nonumber \\
	&\leq \exp(CN^\frac{2}{\beta-\frac{3}{4}}) t_i^{d-1},
	\end{align}
	where in the last inequality, we have used the fact $\hat{\delta}^{-1} \approx \exp(C(\ln N)^3)$ and enlarged the constant $C$. Note that here $\beta\in (\frac{3}{4},1)$ can be arbitrary.
	Thus, together with (\ref{smallt}), we obtain that
	\begin{align}
	H^{d-1}(Z(u_\e)\cap G_i)\leq \exp(CN^\frac{2}{\beta-\frac{3}{4}}) t_i^{d-1}.
	\end{align}
	Taking (\ref{import}) into account, we arrive at the conclusion (\ref{keyineq}).
\end{proof}

\subsection{Proof of Theorem \ref{mainth}}
Thanks to Lemma \ref{keylemma}, we are able to show the upper bound of the nodal sets of $u_\e$ in the interior domain.
\begin{proof}[Proof of Theorem \ref{mainth}  ($d\ge 3$)]
	We first consider the case $\e\leq \exp(-C(\ln N)^3)$.  Recall from (\ref{comparedouble}) that \begin{equation}
	\int_{B_{5/3}(x)} u_\e^2  \le 2N^2 \int_{B_{5/6}(x)} u_\e^2
	\end{equation}
for any $x\in B_{1/3}$.
By Theorem \ref{thm.large.r}, it follows that
\begin{equation}
	\int_{B_{2r}(x)} u_\e^2  \le 4N^2 \int_{B_{r}(x)} u_\e^2
\label{moutain}
	\end{equation}
for $ CN^{\frac{1}{\beta-\frac{3}{4}}} \e <r<\frac{5}{6}$ and any $x\in B_{1/3}$.
By (\ref{moutain}) and Lemma \ref{lem.1step}, we derive that
\begin{align}
\int_{B_r(x)} u_0^2 \leq CN^4 \int_{B_\frac{r}{2}(x)} u_0^2\ ,
\label{double-mono}
\end{align}
as in (\ref{unbelieve}) for $x\in B_{\frac{1}{3}}$ and $CN^{\frac{1}{\beta-\frac{3}{4}}} \e <r< \frac{5}{6}$.
By examining the proof of Lemma \ref{keylemma}, the estimates (\ref{moutain}) and (\ref{double-mono}) guarantee that the arguments in the Lemma \ref{keylemma} hold for $E(x_0, s)$ for $x_0\in B_{\frac{1}{8}}$ and $C\e \exp(C(\ln N)^3) <  s\leq {\frac{r}{16}}$.

Let $v(x)=u_\e(x_0+tx)$ for any $t$  satisfying $CN^{\frac{1}{\beta-\frac{3}{4}}} \e \le C\e \exp(C(\ln N)^3) <t<\frac{5}{6}$ and $x_0\in B_{1/8}$.
Then $v(x)$ satisfies
\begin{align}
\nabla\cdot ( {A}^{\e, t}_{x_0}(x)\nabla v(x))=0 \quad \mbox{in} \ B_2,
\end{align}
where ${A}^{\e, t}_{x_0}(x)=A(\e^{-1}(x_0+tx))$.
By Lemma \ref{keylemma}, we have
	\begin{align}
	\frac{H^{d-1}(Z(v)\cap B_{\frac{1}{16} }(0))}{(\frac{1}{16})^{d-1}}&\leq \exp(CN^\frac{2}{\beta-\frac{3}{4}}) + \frac{1}{4}\sup_{j}\frac{ H^{d-1}(Z(v)\cap B_{\tilde{s}_j}( y_j))}{( \tilde{s}_j)^{d-1}},
	\label{itera}
	\end{align}
where $\tilde{s}_j\in (0, \frac{1}{16\times 8})$ and $y_j\in  B_{\frac{1}{16} }(0)$. By rescaling, we reduce the estimate to $u_\e$ and obtain that
\begin{align}
	\frac{H^{d-1}(Z(u_\e)\cap B_{\frac{t}{16} }(x_0))}{(\frac{t}{16})^{d-1}}&\leq \exp(CN^\frac{2}{\beta-\frac{3}{4}}) + \frac{1}{4}\sup_{j}\frac{ H^{d-1}(Z(u_\e)\cap B_{t\tilde{s}_j}(x_0+ty_j))}{( t\tilde{s}_j)^{d-1}}.
	\label{itera1}
	\end{align}
Let $\tau=\frac{t}{16}$, then $C\e \exp(C(\ln N)^3)<\tau<\frac{5}{96}$.
Thus,
\begin{align}
	E_\e(x_0, \tau )\leq \exp(CN^\frac{2}{\beta-\frac{3}{4}}) +\frac{1}{4}\sup_j E_\e(\hat{y}_j, \hat{s}_j).
	\label{itera2}
	\end{align}
where $\hat{y}_j=x_0+ty_j\in B_\tau(x_0)$, $\hat{s}_j=t\tilde{s}_j\in (0, \frac{\tau}{8})$. Note that $B_{\hat{s}_j}(\hat{y}_j)$ may not be fully contained in
$B_\tau(x_0)$, since $ \hat{y}_j$ may be the centers of subcubes which intersect the boundary of $B_\tau(x_0)$ (we identify the ball $B_\tau(x_0)$ as a cube when we perform the subcubes decomposition).  However, $B_{\hat{s}_j}(\hat{y}_j)\subset B_{\tau+\frac{\tau}{8}}(x_0)$ since $\hat{s}_j\in (0, \frac{\tau}{8})$. If we iterate (\ref{itera2}), $B_{\hat{s}_j}(\hat{y}_j)$ still stays close to $B_{\tau}(x_0)$. Actually, $B_{\hat{s}_j}(y_j)\subset B_{\hat{\tau}}(x_0)$ for any large $j$, where $\hat{\tau}=\sum^\infty_{j=1} \frac{\tau}{8^{j-1}}=\frac{8\tau}{7}$.
	
Now, we iterate (\ref{itera2}) to obtain the desired estimate. The estimate (\ref{keyineq}) yields the initial step of the iteration,
\begin{align}
	E_\e(0, \frac{1}{16})\leq \exp(CN^\frac{2}{\beta-\frac{3}{4}})+\frac{1}{4}\sup_{1\le j \le \hat{m}} E_\e(y_j, \hat{t}_j).
	\end{align}
Assume that $\sup_{1\le j \le \hat{m}} E_\e(y_j, \hat{t}_j)$ is achieved at some  $E_\e(y_{j_0}, \hat{t}_{j_0})$ with $|y_{j_0}|<\frac{1}{16} $ and $|\hat{t}_{j_0}|<\frac{1}{128}$. Let $x_0=y_{j_0}$ and $\hat{t}_{j_0}=\tau$.
 Since $\hat{s}_j< \frac{\tau}{8}$, we apply (\ref{itera2}) to $E_\e(y_{j_0}, \hat{t}_{j_0})$ to get to the estimates of nodal sets at a smaller scale, that is,
 \begin{align}
	E_\e(0, \frac{1}{16})\leq (1+\frac{1}{4}) \exp(CN^\frac{2}{\beta-\frac{3}{4}})+\frac{1}{4} \sup_j E_\e(\hat{y}_j, \hat{s}_j).
	\end{align}
  We apply (\ref{itera2}) repeatedly down to the case $r\approx \hat{C}\e \exp(C(\ln N)^3)$ or the case that $E_\e(y,  r)$ is empty. Note that $B_{\hat{s}_j}(\hat{y}_j)\subset B_{\frac{1}{16}+\frac{8}{7\times 128 }} (0)\subset B_{\frac{1}{12}}(0)$.
	Thus, we derive that
	\begin{align}
	E_\e(0,  \frac{1}{16})&\leq \sum^{\infty}_{i=0} 4^{-i}\exp(CN^\frac{2}{\beta-\frac{3}{4}}) +\sup_{y\in B_{\frac{1}{12}}(0) } \big\{E_\e(y, r): \ 0<r\leq \hat{C}\e \exp(C(\ln N)^3) \big\} \nonumber \\
	&\leq \exp(CN^\frac{2}{\beta-\frac{3}{4}}) + (1+ \hat{C} \exp(C(\ln N)^3)) \exp(CN^\frac{2}{\beta-\frac{3}{4}})  \nonumber \\
	& \le \exp(CN^\frac{2}{\beta-\frac{3}{4}}),
	\label{combin1}
	\end{align}
	where we have used  (\ref{newnewdouble2}) in the second inequality. This proves the desired estimate for the case $\e\leq \exp(-C(\ln N)^3)$.
	
	Finally, for the case $\e \geq \exp(-C(\ln N)^3)$, the desired estimate follows directly from (\ref{newnewdouble2}). Since $\beta\in (\frac34,1)$ is arbitrary, so (\ref{est.Nodal}) holds for any $\alpha>8$.
	This ends the proof of the theorem.
\end{proof}

Following the above proof of Theorem \ref{mainth} for $d\ge 3$, we sketch the proof of upper bounds of nodal sets in $d=2$.

\begin{proof}[Proof of Theorem \ref{mainth} ($d = 2$)]
Since the proof is parallel to $d\ge 3$, we only present the changes for $d=2$. Thus,  we only present the changes for $d=2$.
By the argument in Lemma \ref{smallscale} and the doubling inequality (\ref{est.Br-2}), we can obtain for $0<r < 1/3$,
\begin{align}
	E_\e(x_0,r) \leq C\Big(1+ \frac{r}{\e}\Big) (\ln N)^2.
	\label{newnewdouble2-1}
	\end{align}
On the other hand, the statements of (\ref{set1}) and (\ref{est.sum.ti}) still hold for $u_0$; while for $\nabla u_0$, we have a better bound for $\hat{\delta}$.
Precisely for $d=2$, \cite[Theorem B.1]{NV17} implies
	\begin{equation}\label{est.Vol.BCr-2}
	|B_r(\mathcal{C}_r(u_0)) \cap B_s | \le C^{\big( \widetilde{N}(u_0,B_{2s} ) \big)} \Big( \frac{r}{s} \Big)^2 |B_s|.
	\end{equation}
By mimicking the argument in part (2) of Lemma \ref{strati}, we can show (\ref{set2}) and (\ref{est.sum.hattj}) with $0<\hat{\delta}< e^{-C \hat{N}^2_0}$. Following the proof of Lemma \ref{keylemma}, we may show that if $0<\e<\exp(-C(\ln N)^2)$, then
\begin{align}
	E_\e(0, \frac{1}{16})\leq  \exp(C(\ln N)^2)+\frac{1}{4}\sup_{1\le j \le \hat{m}} E_\e(y_j, \hat{t}_j),
	\label{keyineq-2}
	\end{align}
with $y_j\in B_{\frac{1}{16}}$, $\hat{t}_j \in (0, \frac{1}{128})$.
Observe that the quantitative stratification of critical sets $\nabla u_0$, instead of the doubling inequality, plays the dominant role in the estimate (\ref{keyineq-2}).
We iterate (\ref{keyineq-2}), as in the proof for $d\ge 3$, to get
\begin{align*}
	E_\e(0,  \frac{1}{16})&\leq \sum^{\infty}_{i=0} 4^{-i}\exp(C(\ln N)^2) +\sup_{y\in B_{\frac{1}{12}}(0) } \big\{E_\e(y, r): \ 0<r\leq \hat{C}\e \exp(C(\ln N)^2) \big\} \nonumber \\
	&\leq \exp(C(\ln N)^2) + C(1+ \hat{C} \exp(C(\ln N)^2)) (\ln N)^2  \nonumber \\
	& \le \exp(C(\ln N)^2).
	\end{align*}
This provides the desired estimate (\ref{est.Nodal2d}) for $0<\e<\exp(-C(\ln N)^2)$. For $\e\geq \exp(-C(\ln N)^2)$, (\ref{newnewdouble2-1}) yields the desired estimate directly. This ends the proof.
\end{proof}

\end{document}